\documentclass[11pt,reqno]{amsart}
\setlength{\textheight}{23cm}
\setlength{\textwidth}{16cm}
\setlength{\topmargin}{-0.8cm}
\setlength{\parskip}{0.3\baselineskip}
\hoffset=-1.4cm

\usepackage{amsmath,amssymb,xypic,mathrsfs,bm}
\usepackage{adjustbox}
\usepackage{tikz-cd}

\numberwithin{equation}{section}


\theoremstyle{plain}
 \newtheorem{theorem}{Theorem}[section]

\theoremstyle{definition} 
 \newtheorem{remark}[theorem]{Remark}
\input xy
\xyoption{all}

\newif\ifPDF
\ifx\pdfoutput\undefined 
\ifx\pdfversion\undefined
\PDFfalse
\fi
\else 
\ifnum\pdfoutput > 0
\PDFtrue
\else
\PDFfalse
\fi
\fi

\makeatletter
\newcommand{\xRightarrow}[2][]{\ext@arrow 0359\Rightarrowfill@{#1}{#2}}
\makeatother

\theoremstyle{plain}

\newtheorem{proposition}[theorem]{Proposition}		
\newtheorem{corollary}[theorem]{Corollary}
\newtheorem{lemma}[theorem]{Lemma}


\newtheorem{definition}[theorem]{Definition}

\newtheorem{example}[theorem]{Example}





\newcommand{\CBbb}{\mathbb C}

\newcommand{\QBbb}{\mathbb Q}



\newcommand{\Bcal}{\mathcal B}
\newcommand{\Ccal}{\mathcal C}

\newcommand{\Ecal}{\mathcal E}

\newcommand{\Hcal}{\mathcal H}

\newcommand{\Lcal}{\mathcal L}

\newcommand{\Pcal}{\mathcal P}

\newcommand{\Tcal}{\mathcal T}

\newcommand{\Vcal}{\mathcal V}



\newcommand{\Xfrak}{\mathfrak X}

\newcommand{\gfrak}{\mathfrak g}
\newcommand{\hfrak}{\mathfrak h}



\newcommand{\Cscr}{\mathscr C}



\newcommand{\SL}{\mathsf{SL}}

\newcommand{\GL}{\mathsf{GL}}

\newcommand{\Sp}{\mathsf{Sp}}
\newcommand{\slfrak}{\mathfrak{sl}}

\newcommand{\TXM}{\mathcal{T}_{{\mathfrak{X}^{par}_{{G}}/{M}_{{G}}^{par,rs}}}}

\newcommand{\AXM}{{}^{spar}{\rm At}_{\mathfrak{X}^{par}_{{G}}/{M}_{{G}}^{par,rs}}}
\newcommand{\TMS}{\mathcal{T}_{{{M}_{{G}}^{par,rs}/S}}}
\newcommand{\AxM}{\At_{\mathfrak{X}^{par}_{{G}}/{M}_{{G}}^{par,rs}} }
\newcommand{\AxxM}{{}^{par}{\rm At}_{\mathfrak{X}^{par}_{{G}}/{M}_{{G}}^{par,rs}}(\Pcal)}

\DeclareMathOperator{\Par}{Par}
\DeclareMathOperator{\Spar}{SPar}



\DeclareMathOperator{\id}{id}

\DeclareMathOperator{\Sym}{Sym}

\DeclareMathOperator{\ad}{ad}

\DeclareMathOperator{\Hit}{Hit}

\DeclareMathOperator{\ParAt}{{}^{\emph{par}}At}
\DeclareMathOperator{\SParAt}{{}^{\emph{spar}}At}
\DeclareMathOperator{\ParEnd}{Par}
\DeclareMathOperator{\SParEnd}{SPar}



\newcommand{\lra}{\longrightarrow}

\newcommand{\At}{{\rm At}}

\newcommand{\isorightarrow}{\xrightarrow{
		\,\smash{\raisebox{-0.5ex}{\ensuremath{\sim}}}\,}}



\begin{document}
\title[]{A Hitchin connection on nonableian theta functions for parabolic $G$-bundles}

\author[Biswas]{Indranil Biswas}

\address{Mathematics Department, Shiv Nadar University, NH91, Tehsil Dadri,
	Greater Noida, Uttar Pradesh 201314, India}

\email{indranil.biswas@snu.edu.in, indranil29@gmail.com}

\author[Mukhopadhyay]{Swarnava Mukhopadhyay}

\address{School of Mathematics, Tata Institute of Fundamental Research,
	Homi Bhabha Road, Mumbai 400005, India}
\email{swarnava@math.tifr.res.in}

\author[Wentworth]{Richard  Wentworth}
\address{Department of Mathematics,
	University of Maryland,
	College Park, MD 20742, USA}
\email{raw@umd.edu}
\thanks{I.B. is
	supported in part by a J.C. Bose fellowship and both I.B. and S.M. by DAE, India
	under project no. 1303/3/2019/R\&D/IIDAE/13820.  }

\subjclass[2010]{14H60, 32G34, 53D50}


\begin{abstract}For a simple, simply connected complex affine algebraic group $G$, 
	we prove the existence of a flat projective connection on the bundle of
	nonabelian theta functions on the moduli spaces of semistable
	parabolic $G$-bundles for families of smooth projective 
	curves with marked points. 
\end{abstract}
\maketitle
\allowdisplaybreaks

\thispagestyle{empty}

\section{Introduction}
In this paper, we prove the existence of a flat projective connection on
spaces of generalized theta functions on the moduli spaces of parabolic
$H$-bundles for  a family of  smooth projective curves with marked points, where
$H$ is a connected, complex, simple, affine algebraic group. 
Before stating the precise results, and 
since it is part of the larger and well-studied program
of geometric quantization,  we first provide a brief historical context to
this subject. 

Quantization as envisioned by Dirac,  et al., can be thought of  as a
deformation of a classical mechanical system depending on a 
parameter $\hbar$ that recovers the original classical system in the limit. 
Kostant-Kirillov-Souriau developed and generalized this notion of 
``quantizing a function'', and Auslander-Kostant \cite{ASKT} used it 
to construct unitary  representations  of a connected Lie group (see 
also Kirillov \cite{Kirillov}). 
\paragraph{\bf Geometric Quantization} The starting point of the theory is a 
symplectic manifold $(M,\,\omega)$ where the symplectic form $\omega$ 
is the curvature of a Hermitian line bundle $\Lcal$ with 
connection $\nabla$. The quantum Hilbert space $\mathscr{H}$ 
is then the $L^2$-completion of the space of global 
sections $\Gamma(M,\,\Lcal)$ of this line bundle. The 
Lie algebra of functions on $M$, under the Poisson bracket 
given by the form $\omega$, acts naturally on $\mathscr{H}$. This 
process of assigning a function to this Lie algebra satisfying certain 
commutativity constraints depending on $\hbar$ is known 
as quantization in the present literature. However, 
it is not possible to achieve these commutativity constraints in practice.
To remedy this, Kostant \cite{Kostant70} and Souriau 
\cite{Souriau67} further consider a compatible
almost complex structure $I$ on 
$M$ such that $(M, \omega, I)$ is a K\"ahler manifold.
This  induces a holomorphic structure on the line bundle $\Lcal$	
and leads to the notion of {\em geometric quantization}, where the 
Hilbert space $\mathscr{H}_{I}$ is reduced to the space 
of holomorphic $L^2$-sections of $\Lcal$. Because the quantization process
should arrive at a unique answer, 
it is natural to investigate the dependence of the geometric 
quantization on the choice of almost complex structure $I$ on $M$. 

In \cite{Hitchin:90}, Hitchin analyzes
this question in a very important setting (see also 
\cite{ADW}, \cite{Faltings:93}, \cite{VanGeemenDeJong:98}, \cite{Andersen:12}).
Here, $M\,=\,\operatorname{Hom}^{irr}(\pi_1(\Sigma),\,K)/K$,
is the moduli space of a class of representations of the fundamental 
group $\pi_1(\Sigma)$ to $K$, where $\Sigma$ is a closed oriented surface
and $K\subset G$ is a maximal compact
subgroup of the earlier mentioned simple, simply connected group $G$. The group 
$K$ acts by conjugation on a representation $\rho:\pi_1(\Sigma)\to K$,
and $\rho\, \in\, \operatorname{Hom}^{irr}(\pi_1(\Sigma),\,K)$ if the stabilizer of $\rho$ under this action
is exactly the center of $K$. This space has a  symplectic 
form defined by Atiyah-Bott \cite{AtiyahBott:82}, Narasimhan \cite{NarsElliptic}, and Goldman.  A choice of a complex structure  $I$
on $\Sigma$ endows $M$ with a K\"ahler structure, and via the 
Narasimhan-Seshadri-Ramanathan theorem this complex manifold, which we call $M_I$, can be 
identified with the space of regularly  stable holomorphic principal 
$G$-bundles on $C\,:=\,(\Sigma, \,I)$ (see \cite[Prop.\ 7.7 and Thm.\
7.1]{Ramanathan:75}).  The 
role of $\Lcal$ is played by a determinant of cohomology line bundle 
defined via some linear representation
of $G$, 
and $\mathscr{H}_I\,:=\,H^0(M_I,\,\Lcal^{\otimes k})$ 
is the space of nonabelian theta functions of level $k$.
The connection $\nabla$ is the Chern connection of the Quillen metric. 
Hitchin found a flat projective connection on the bundle of 
nonabelian theta functions over a family of curves of fixed genus. 
His construction may be interpreted as a natural identification
between the spaces $\mathbb{P}(H^0(M_I,\Lcal^{\otimes k})\cong 
\mathbb{P}(H^0(M_{I'}, \Lcal^{\otimes k})$  via parallel transport 
along a path connecting $I$ and $I'$ in the Teichm\"uller space.  

\paragraph{\bf TUY/WZW connection} As mentioned above, 
the vector spaces $\mathscr{H}_I$ that appear in Hitchin's geometric
quantization have a counterpart in the WZNW-model of a 
$2d$ rational conformal field theory constructed by Tsuchiya-Ueno-Yamada 
\cite{TUY:89}, which appears in the quantization of 
a $3d$-Chern-Simons theory to a $3d$-TQFT as considered by Witten 
\cite{Witten89}. Let $\mathfrak{g}$ denote the Lie algebra of  $G$. 
Given a positive integer $k$ and an 
$n$-tuple $\bm{\lambda}$ of dominant weights for $\gfrak$ satisfying a certain
integrability condition depending on $k$, the paper \cite{TUY:89} constructs
a vector bundle   
$\mathbb{V}^{\dagger}_{\bm{\lambda}}(\mathfrak{g},k)$ on the 
Deligne-Mumford compactification $\overline{\mathcal{M}}_{g,n}$ of
stable $n$-pointed curves of genus $g$.   
Over the interior ${\mathcal{M}}_{g,n}$ 
parametrizing smooth curves, 
$\mathbb{V}^{\dagger}_{\bm{\lambda}}(\mathfrak{g},k)$ admits a flat
projective connection. 
These vector bundles of conformal blocks  
satisfy the  axioms of a $2d$-rational conformal field theory. Moreover,
due to work of Beauville-Laszlo \cite{BeauvilleLaszlo:94}  and
Kumar-Narasimhan-Ramanathan \cite{KNR:94}, 
in the case of a single puncture with  trivial weight,
we get a canonical (up to a scalar) 
identification of $\mathscr{H}_{I}$ with the fiber of
$\mathbb{V}^{\dagger}_{\bm{\lambda}}(\mathfrak{g},k)$ at the 
point $C\,=\,(\Sigma,\, I)$ in $\mathcal{M}_{g,n}$. 
It is  natural to ask
whether the connections of Hitchin \cite{Hitchin:90} and 
Tsuchiya-Ueno-Yamada \cite{TUY:89} coincide. 
That this is indeed the case was proven by Laszlo \cite{Laszlo}. 

A generalization of the identification of $\mathscr{H}_I$ with 
conformal blocks also holds for smooth $C$ with an $n$-tuple of marked points
$\bm p$. 
Consider the moduli space 
${M}_{G}^{par,rs}={M}_{G}^{par,rs}(C, {\bm p} , {\bm \lambda})$ of regularly  
stable parabolic $G$ bundles on a compact 
Riemann surface $C$ with $n$-marked points $\bm p$  
and parabolic structures $\bm{\lambda}$ at $\bm p$. Let 
$\mathcal{L}_{\bm{\lambda},k}$ be a parabolic ``determinant of 
cohomology'' line bundle  on ${M}_{G}^{par,rs}$. Then  
there is a canonical (up to scalars) isomorphism between the finite dimensional vector
space of holomorphic sections $H^0({M}^{par,rs}_{G},
\mathcal{L}_{\bm{\lambda},k})$ and the fiber of the space of conformal
blocks
$\mathbb{V}^{\dagger}_{\bm{\lambda}}(\mathfrak{g},k)\bigr|_{(C,\bm{p})}$
(see \cite{Pauly:96} and \cite{LaszloSorger:97}). 
This identification between conformal 
blocks and nonabelian theta functions is a mathematical analog 
of the Chern-Simons/WZNW correspondence of Witten \cite{Witten89}.  
Since 
the vector bundle of conformal blocks is endowed with a 
flat projective connection, it is very natural to ask the following question: 

\medskip

{\bf Question}. \emph{Is there a natural
flat projective connection on the family of spaces $H^0({M}^{par,rs}_{G},
	\mathcal{L}_{\bm{\lambda},k})$ as the pointed Riemann surface
	structure of $C$ moves in a holomorphic family?}
\medskip

For parabolic vector bundles, a construction of the projectively flat  connection was
given
by Scheinost-Schottenloher in 
\cite{SS} for those {\em special cases }
of weights $\bm \lambda$ such that the canonical class of the
corresponding parabolic moduli space, which 
depends only on the rank, number of points, and  the flag types 
of $\bm \lambda$,   admits a square root. 
This condition often appears in the context of geometric quantization under
the term \emph{metaplectic correction} (see also
\cite{AGL}) and it produces a projective connection on the push-forward of
the line bundle obtained by modifying $\mathcal{L}_{\bm \lambda,k}$ by the square root. The proof in the above reference makes use of a correspondence between
parabolic bundles on a curve with rational weights, and holomorphic bundles
on an  associated elliptically fibered complex surface.  However, for moduli spaces of
parabolic bundles, the condition on the  existence of a square root of the canonical bundle
is   {\em not always satisfied}. 

In \cite{BjerreThesis}, Bjerre proved the existence of a (unique) 
flat projective connection
for the moduli space of parabolic vector bundles via a gauge theoretic
description of the moduli space.   
An important step in the proof was to remove the condition on the 
existence of a  square root  by passing to a different
moduli space with altered weights.\footnote{After the present paper was posted to the arXiv
we received a preliminary version  of the work of Andersen-Bjerre attributed here \cite{AndersenBjerre}.} \footnote{Subsequent to the submission of this paper, in
May 2023 a draft of  the thesis of 
Zakaria Ouaras \cite{Zakaria} appeared in which the author 
proves the existence
of a unique flat projective connection in the case of moduli spaces of parabolic vector bundles with arbitrary fixed determinant and genus $g \geq 2$.}The 
results of Bjerre and Scheinost-Schottenloher stated above
work only for curves of genus $g\geq2$ and exclude the important case
of genus zero curves with marked points. 
The connection on conformal blocks for genus zero curves 
is known as the Knizhnik-Zamolodchikov connection,
and it has been extensively studied from different perspectives.

The motivation of the present paper is to 
give an affirmative answer to the above question for general $G$ and curves of all genus 
using algebro-geometric methods applied directly to the moduli spaces
in question. 
To state the result precisely, 
first note that the curve $C$ and  parabolic weights $\bm \lambda$ 
determine an  orbifold curve $\mathscr{C}$ (cf.\ Appendix
\ref{sec:appC} and Lemma \ref{lem:codimension}).  Our main result is the following:

\medskip

{\bf Main Theorem}. \emph{Let
	$\Ccal\rightarrow S$ be a versal family of $n$-pointed smooth
	projective    curves, and let $G$ be a simple, simply connected
    complex  algebraic group.  Assume that the genus $g(\mathscr{C})$ of the 
	orbifold curve  
	determined by the weights $\bm \lambda$ satisfies $g(\mathscr{C})\geq 2$, and if
    $G=\SL_2$ or $\Sp_4$, $g(\mathscr{C})\geq 3$.  Let 
	$\pi:{M}^{par, rs}_{G}\rightarrow S$ be the 
	relative moduli space of regularly stable parabolic 
	$G$ bundles on $\Ccal$ for some fixed parabolic weights
	$\bm\lambda$.
	Let $\mathcal{L}_{\phi}$ be the 
	determinant of cohomology line bundle on 
	${M}_{G}^{par,rs}$ determined by  a  choice of 
	representation $\phi : G\rightarrow \SL_r$. 
	Then for any $a\in \QBbb$, for which $\mathcal{L}_{\phi}^{\otimes a}$ 
	defines a line bundle on ${M}_{G}^{par,rs}$,
	the coherent sheaf $\pi_\ast(\mathcal{L}_{\phi}^{\otimes a})$
	has a natural flat projective connection.
}
\medskip

Observe that we can allow the genus of $C$ to be zero or one in
the above theorem, provided some inequalities are satisfied (cf.\ Example
\ref{example:genuszero} below). It is reasonable to  expect that 
the TUY connection for conformal blocks 
and the parabolic Hitchin connection constructed in 
the  Main Theorem  coincide under the identification mentioned above. 
We postpone this question for a future work. 

\paragraph{\bf Key difference in the parabolic case}Before proceeding further, 
we describe the key difference in the parabolic set-up. 
The moduli space of principal $G$-bundles satisfies a 
``monotone'' condition: the first Chern class of the moduli
space is a multiple of the Chern class of the prequantum line
bundle. This 
property is  an  important technical point in Hitchin's construction of the
connection (cf.\ \cite[eqs.\ (2.8) and (3.9)]{Hitchin:90}), and it leads to
a solution to the van Geemen-de Jong condition in Theorem \ref{thm:hitchimain} (i)
below. 

The main new feature in the case of {\em parabolic bundles }
is the higher rank of the Picard group of the moduli space,
and because of this {\em monotonicity
	no longer holds}.

\paragraph{\bf Main Ideas}The key ideas and methods used this paper
to address the lack of  monotonicity mentioned 
above are the following:
\begin{itemize}
	\item  The
	fiducial symbol coming from
	the usual construction of Hitchin connection can be naturally 
	modified to a new condition that now satisfies 
	the van Geemen-de Jong condition (see
	\eqref{eqn:hitchparsymbol}). 
	
	\item This modification is facilitated by another 
	crucial ingredient, which is a 
	categorical equivalence  of ``$\pi$-bundles'' on a ramified cover
	$\widehat C\to C$ with 
	parabolic bundles on $C$ (\cite{SeshadriI}, \cite{Biswasduke}, \cite{BalajiBiswasNagaraj} and \cite{SeshadriII}). 
	\item We prove and use an equivariant analog of a result of 
	Beilinson-Schechtman \cite{BS88} connecting 
	classes of {\em Atiyah algebras} obtained as equivariant push-forwards 
	of a differential graded Lie algebra
	with those associated to the determinant of cohomology of the universal bundle. 

	\item Finally we use the fact that the line bundles on moduli 
	space of parabolic bundles 
	adapted to the parabolic weights correspond exactly to the restriction of
	the determinants of cohomology to the locus of  orbifold bundles (cf.\
	\cite{BiswasRaghavendra}, \cite{DW97}). 
	
\end{itemize}

We now discuss some applications of the main theorem mentioned above. 
Let ${H}$ be a simple algebraic group with nontrivial fundamental
group, and let
${\widetilde{H}}$ be its simply connected cover.  Let $\pi:{M}^{par, rs,0}_{H}\rightarrow S$  be the neutral component of the
relative moduli space of regularly stable parabolic 
$H$ bundles on $\Ccal\to S$ for some fixed parabolic weights
$\bm\lambda$, which we assume lift to weights for  ${\widetilde{H}}$. 
As before, let $\mathcal{L}_{\bm \lambda, k}$ be the parabolic determinant of 
cohomology. It is natural 
to ask whether the coherent sheaf $\pi_{*}\mathcal{L}_{\bm \lambda, k}$ 
carries a projectively flat connection. A direct corollary of the main theorem is the following:
\begin{corollary}\label{cor:semisimpleG}
	For any simple group ${H}$, the coherent sheaf $\pi_{*}\mathcal{L}_{\bm \lambda, k}$ is 
	locally free and carries a flat projective connection whose 
	symbol is the same that for that for the simply connected cover ${\widetilde{H}}$.
\end{corollary}

Observe that moduli spaces of parabolic bundles are not 
necessarily Fano, and hence 
we cannot use a Grauert-Riemenschneider
type vanishing theorem as in the nonparabolic case to conclude local freeness via vanishing 
of higher cohomologies. Furthermore, since $H$ is not simply connected, 
we cannot reconstruct these space via affine Lie algebraic methods.

We now briefly recall  the earlier constructions of the Hitchin/WZW/TUY connections in the nonparabolic setting.
Hitchin's construction of a projective connection 
in the closed (nonparabolic) case draws parallels with Welters' work on 
theta functions for abelian varieties \cite{welters}.
The starting point is the description of first order deformations of
the 
triple $(M_I,\Lcal^{\otimes k},s)$, where $s\in H^0(M_I,\Lcal^{\otimes
	k})$, in terms of the first hypercohomology group
of the complex
$\At(\Lcal^{\otimes k})\to \Lcal^{\otimes k}$ constructed using $s$. 
Here,  $\At(\Lcal)$ denotes  the Atiyah algebra of $\Lcal$.  
Though Hitchin's methods were differential geometric in nature,
in  \cite{VanGeemenDeJong:98} van Geemen and de Jong reinterpreted
the construction
in an algebraic manner closer to that of \cite{welters}.
Using this  framework, along with the fundamental  results of 
Beilinson-Schechtman \cite{BS88} and Bloch-Esnault \cite{BE}, 
Baier-Bolognesi-Martens-Pauly \cite{BBMP20} 
reproduced Hitchin's connection for $G=\SL_r$. 
Moreover, their proof works over
fields of positive characteristic,  with 
a few extra assumptions.

The Hitchin connection for $G=\GL_r$ bundles had previously been found
by Belkale \cite{Belkale:09}. 
Other algebro-geometric constructions of the Hitchin connection
are given in \cite{Faltings:93}, \cite{Ran},  \cite{RamaHitchin},  and 
by Ginzburg in \cite{Ginzburg}. Ref.\  \cite{ST} uses the results of 
\cite{BS88} to extend Hitchin's connection for logarithmic connections
and the moduli space of semistable torsion-free sheaves on nodal curves. 
The approach in the present paper  is strongly
motivated by \cite{BBMP20} and \cite{Ginzburg}.

\paragraph{\bf Further generalizations}   In fact, it is possible to work in the general setting of 
$\Gamma$-$\operatorname{Aut}(G)$-bundles.
A moduli space of such pairs with  a fixed local
type has been constructed by Balaji-Seshadri \cite{BalajiSeshadri}
(in the case of $\Gamma$-$G$-bundles in characteristic zero) and by 
Heinloth \cite{HeinlothEpiga} (in the more general settings of
Bruhat-Tits torsors in the sense of Pappas-Rapoport \cite{PR1}, and over
fields of arbitrary characteristic).
We note that 
it has been not verified whether the stability conditions of 
\cite{BalajiSeshadri} and  \cite{HeinlothEpiga} coincide. 
Nevertheless, the results in Section \ref{sec:symbol} generalize verbatim 
to moduli spaces of $\Gamma$-$\operatorname{Aut}(G)$-bundles.
However, in order to produce a Hitchin 
connection (as described in Section \ref{sec:proof}),
the following additional information would be required: 
\begin{itemize}
	\item the base of the Hitchin map for the  moduli of parahoric 
	Higgs bundles for $(\Gamma, \operatorname{Aut}(G))$ is 
	affine, and the fibers of the Hitchin map are connected; 
	\item the complement of the cotangent bundle of the moduli space of 
	$\Gamma$-$\operatorname{Aut}(G)$-bundles 
	in the parahoric Higgs bundles moduli space has codimension at least 2.
\end{itemize}
There are some results in the direction of the first point by B. Wang
\cite{BW2},  who extends the  result of Donagi-Pantev \cite{DonagiPantev:12}
to the set-up of  parahoric $\Gamma$-$G$-Higgs bundles. 
In full generality, however, the two items  above are not presently available 
in the literature,  and 
we therefore restrict ourselves here to the setting of parabolic bundles.

\paragraph{\bf Outlook}	The paper \cite{BBMP20} cited above  argues that  it is  of independent interest
to consider the Hitchin connection over field of
positive characteristics from the view point of the 
Grothendieck-Katz $p$-curvature conjecture and the modular
representations of the mapping class group. 
The constructions in this paper follow those of   \cite{BBMP20} and  are likely
to work (after suitable modifications of the techniques used here) over fields of characteristic $p >0$, 
unless $p \in \{2,3,h^\vee(\mathfrak{g}),k, k+h^\vee\}$. 
But even given these constraints on $p$ it is not clear
whether $\pi_*\mathcal{L}_{\bm{\lambda},k}$ is locally free. For this, it
would be  enough
to show that $H^1(M^{par,ss}_{{G}},\mathcal{L}_{\bm{\lambda}, k})$ vanishes. 
However, in the parabolic case the moduli spaces $M^{par,ss}_{{G}}$ are
not Fano in general, 
even in characteristic zero. 
Moreover, there is no suitable Grauert-Riemenschneider vanishing theorem. 

A uniform approach to this vanishing result would follow 
if one can show that $M^{par,ss}_{{G}}$ are Frobenius-split. 
There is some work in this direction for ${G}=\SL_2$ by 
Mehta-Ramadas \cite{MehtaRamadas} and by Sun-Zhou \cite{SZ}, who show that 
semistable parabolic bundles of rank $r$ and fixed determinant are globally
$F$-regular type. A general result on Frobenius splitting for moduli of
parabolic bundles is presently missing in the literature.

\paragraph{\bf Organization}This paper is organized as follows:
In Section 1, we review the construction of the projectively flat connection in the general set-up following Hitchin \cite{Hitchin:90} and 
van Geemen-de Jong \cite{VanGeemenDeJong:98}.
In Section \ref{sec:symbol},
we review the generalizations of Hitchin's symbol and Kodaira-Spencer maps
in the parabolic bundle context.  
The important result here is Theorem \ref{thm:cup-product}, which
relates the fiducial Hitchin symbol to the relative extension classes of
the Atiyah algebras of the $G$-bundle and the determinant of cohomology. 

Finally, in Section \ref{sec:proof}  we prove that the modified Hitchin
symbol satisfies the constraint equations of van Geemen-de Jong. This leads
to the proof of the Main Theorem. 
The last three sections contains some definitions and technical results on parabolic bundles,
invariant push-forwards, and vanishing theorems, that are used at various
points in the paper. 
In particular, the  determinant of cohomology line bundles 
$\mathcal{L}_{\phi}$ associated to a linear representation $\phi$ of $G$ are
defined there. 
\emph{Parabolic} determinant of cohomology line bundles
are  defined in 
\ref{def:parabolcidetSL} and \ref{def:parabolicdeteG}.
We also explain the admissible values of $k$,  how to 
realize the parabolic determinant of 
cohomology bundles via the moduli space of $\Gamma$-$G$-bundles,  
and the invariant push-forward functor construction.

For the rest of the paper we emphasize that the ground field of varieties
and schemes is always  
 $\CBbb$, and we shall freely go back and forth between Zariski and
 analytic topologies.

\section{Flat projection connection following Hitchin--van Geemen--de Jong}

Let $\pi\,:\, M \,\rightarrow\, S$ be a smooth surjective proper map of smooth
varieties with connected fibers and $\mathcal{L}\to M$ a line bundle. In this 
section we briefly recall a general approach for constructing connections on the 
coherent sheaf $\pi_*\mathcal{L}$.
This is due to Hitchin \cite{Hitchin:90} in the K\"ahler setting (generalizing Welters 
\cite{welters}) and to  van Geemen--de Jong \cite{VanGeemenDeJong:98} in the
algebro-geometric setting. 

\subsection{Heat operators}
From \cite[Sec.\ 2.3]{VanGeemenDeJong:98} we recall the notion of a heat operator and associated connections. 
For  $i\geq\ 1$,
let $\mathcal{D}^{\leq i}(\mathcal{L})$ (resp.\  $\mathcal{D}^{\leq i}_{M/S}(\mathcal{L})$) denote the sheaf of
differential operators (resp.\ relative differential operators) of order at most $i$ on the line bundle $\mathcal{L}$. 

Consider the subsheaf 
$\mathcal{W}_{M/S}(\mathcal{L})\,=\,\mathcal{D}^{\leq
1}(\mathcal{L})+\mathcal{D}_{M/S}^{\leq 2}(\mathcal{L})$ of the sheaf of  second order differential operators on $\mathcal{L}$. It fits into the following short exact sequence:
\begin{equation}\label{z1}
0 \rightarrow \mathcal{D}^{\leq 1}_{M/S}(\mathcal{L})\rightarrow \mathcal{W}_{M/S}(\mathcal{L})
\rightarrow \pi^*\mathcal{T}_S \oplus\operatorname{Sym}^2 \mathcal{T}_{M/S} \rightarrow 0\ .
\end{equation}
Note that $\mathcal{O}_S\subset \mathcal{D}^{\leq 1}_{M/S}(\mathcal{L})\subset \mathcal{W}_{M/S}(\mathcal{L})$.

\begin{definition} \label{def:heat}
A heat operator $D$ on $\mathcal{L}$ is a map 
$D\,:\, \pi^*\mathcal{T}_{S}\,\rightarrow \,\mathcal{W}_{M/S}(\mathcal{L})$ whose composition with the natural projection
map $\mathcal{W}_{M/S}(\mathcal{L})\rightarrow\pi^*\mathcal{T}_{S}$, given by \eqref{z1}, is the identity map of $\pi^*\mathcal{T}_{S}$. 
A projective heat operator $\overline{D}$ on $\mathcal{L}$ is an $\mathcal{O}_S$-linear map $\overline{D}:
\mathcal{T}_S \rightarrow (\pi_* \mathcal{W}_{M/S}(\mathcal{L}))/\mathcal{O}_S$ such that any local lifting gives a heat operator.
\end{definition}

Given a heat operator $D$, we can construct a connection $\nabla(D): \pi_*\mathcal{L} \rightarrow \pi_*\mathcal{L}\otimes \Omega^1_S$
on the coherent sheaf $\pi_*\mathcal{L}$ as follows:
Let $\theta\in \mathcal{T}_S(U)$, where $U\subset S$ an open subset. Then by definition, $D(\pi^{-1}\theta)$ is a second order differential
operator on $\mathcal{L}(\pi^{-1}(U))$. Let $s$ be a section of $\pi_*\mathcal{L}(U)$ and $f\in \mathcal{O}_S(U)$. Then
$D(\pi^{-1}\theta)((f\circ\pi)s)\,=\, f\cdot D(\pi^{-1}\theta)(s)+\theta (f)\cdot s$, in other words,
$D(\pi^{-1}\theta)$ satisfies the Leibniz rule. Indeed,
this follows from the requirement in  Definition \ref{def:heat} that the heat operator
is the standard first order operator on the base. Hence, we get a connection $\nabla(D)$.

\subsection{Existence of a heat operator}

The Kodaira-Spencer map is given by:
$$KS_{M/S}: \mathcal{T}_{S}\lra R^1\pi_{*} \mathcal{T}_{M/S}.$$
On the other hand, we have  the
coboundary map $$\mu_{\mathcal{L}}: \pi_*\operatorname{Sym}^2\mathcal{T}_{M/S}
\longrightarrow R^1\pi_* \Tcal_{M/S},$$
occurring in the long exact sequence obtained from the push forward $\pi_{*}$ of the fundamental
short exact sequence of differential operators 
\begin{equation*}
0 \lra \mathcal{T}_{M/S} \cong \mathcal{D}_{M/S}^{\leq
	1}(\mathcal{L})/\mathcal{O}_M\lra \mathcal{D}_{M/S}^{\leq
	2}(\mathcal{L})/\mathcal{O}_{M} \stackrel{s_2}{\lra} \operatorname{Sym}^2
\mathcal{T}_{M/S}\lra 0\ ,
\end{equation*}where $s_2$ is the symbol map.
Given $\rho: \mathcal{T}_{S}\rightarrow
\pi_*(\Sym^2\mathcal{T}_{M/S})$,
van Geemen and de Jong \cite{VanGeemenDeJong:98} analyze necessary conditions
so that this map $\rho$ arises as a symbol of a projective heat
operator. More precisely, one seeks  a map 
$\overline{D}: \mathcal{T}_S \to\bigl(\pi_*\bigl(
\mathcal{D}^{\leq 1}(\mathcal{L})+\mathcal{D}_{M/S}^{\leq
	2}(\mathcal{L})\bigr)\bigr)\big{/}\mathcal{O}_S$,
such the following diagram commutes: 
\begin{equation*}
\begin{tikzcd}
\mathcal{T}_S \arrow[r,"\overline{D}"]\arrow[rrd, "\rho"', bend right=5] &
    \bigl(\pi_*\bigl( \mathcal{D}^{\leq
    1}(\mathcal{L})+\mathcal{D}_{M/S}^{\leq 2}(\mathcal{L})\bigr)\bigr)/\mathcal{O}_S\arrow[r,hook]& \left(\pi_*\mathcal{D}^{\leq 2}(\mathcal{L})\right)\big{/}\mathcal{O}_S \arrow[d,"s_2"]\\
&& \pi_*(\Sym^2\mathcal{T}_{M/S})\ .
\end{tikzcd}
\end{equation*} 
The following theorem is one of the main results in \cite{VanGeemenDeJong:98}
(see \cite[Sec.\ 2.3.7]{VanGeemenDeJong:98}).
It gives an algebro-geometric perspective on Hitchin's 
construction of the flat projective connections for a family of K\"ahler
polarizations in \cite[Thm.\ 1.20]{Hitchin:90}. 

\begin{theorem}[{\sc Existence criteria}] \label{thm:hitchimain}
	Given a symbol map $\rho: \mathcal{T}_S \rightarrow \pi_{*}\operatorname{Sym}^2 \mathcal{T}_{M/S}$, with $M$, $\mathcal{L}$ and $S$
as above, there exists a unique projective heat operator 
	$\overline{D}$ who symbol is $\rho$ if the following three
	conditions are satisfied: 
	\begin{enumerate}
		\item {\rm (Hitchin, van Geemen-de Jong equation)}:
		$KS_{M/S}+ \mu_{\mathcal{L}}\circ \rho=0$ in $\Tcal_S$;
		
\item {\rm (Welters condition)} the cup product: 
		$\cup\,
		[\mathcal{L}]:
		\pi_*\mathcal{T}_{M/S}\rightarrow R^1\pi_*\mathcal{O}_M$
		is an isomorphism;

		\item $\pi_*\mathcal{O}_M=\mathcal{O}_S$.
	\end{enumerate}
	In particular, if the coherent sheaf $\pi_\ast\mathcal{L}$ is locally free,
	then $\mathbb{P}(\pi_\ast\mathcal{L})$ is equipped with a connection.
\end{theorem}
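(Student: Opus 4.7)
The plan is to follow the approach of van Geemen--de Jong \cite{VanGeemenDeJong:98}: realize the desired $\overline{D}$ as a lift of $\rho$ through a carefully built short exact sequence of sheaves of differential operators on $M$, and show that each of the three hypotheses removes a specific obstruction. I will set $\mathcal{W}(\mathcal{L}) := \mathcal{D}^{\leq 1}(\mathcal{L}) + \mathcal{D}^{\leq 2}_{M/S}(\mathcal{L}) \subset \mathcal{D}^{\leq 2}(\mathcal{L})$, the sheaf of operators that are globally first-order but allowed to be second-order along the fibers of $\pi$. By definition, a projective heat operator lifting $\rho$ is a morphism $\overline{D}: \mathcal{T}_S \to \pi_*\mathcal{W}(\mathcal{L})/\mathcal{O}_S$ whose second-order symbol is $\rho$ and whose horizontal first-order symbol is $\mathrm{id}_{\mathcal{T}_S}$. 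The central short exact sequence is
\[
0 \to \mathcal{T}_{M/S} \to \mathcal{W}(\mathcal{L})/\mathcal{O}_M \xrightarrow{(s_2,\,\sigma_h)} \operatorname{Sym}^2\mathcal{T}_{M/S} \oplus \pi^{-1}\mathcal{T}_S \to 0,
\]
whose kernel is identified with $\mathcal{D}^{\leq 1}_{M/S}(\mathcal{L})/\mathcal{O}_M \cong \mathcal{T}_{M/S}$ via the relative Atiyah sequence.

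I will first construct a lift of the pair $(\rho,\mathrm{id}_{\mathcal{T}_S})$ in the larger sheaf $\pi_*(\mathcal{W}(\mathcal{L})/\mathcal{O}_M)$. Applying $\pi_*$ to the displayed sequence produces a connecting map $\delta : \pi_*\operatorname{Sym}^2\mathcal{T}_{M/S} \oplus \mathcal{T}_S \to R^1\pi_*\mathcal{T}_{M/S}$. Pulling back along the two natural sub-extensions of $\mathcal{W}(\mathcal{L})/\mathcal{O}_M$, namely the extension $0 \to \mathcal{T}_{M/S} \to \mathcal{D}^{\leq 2}_{M/S}(\mathcal{L})/\mathcal{O}_M \to \operatorname{Sym}^2\mathcal{T}_{M/S} \to 0$ and the extension $0 \to \mathcal{T}_{M/S} \to \mathcal{T}_{M,\pi} \to \pi^{-1}\mathcal{T}_S \to 0$, I will identify $\delta$ as $\mu_\mathcal{L}$ on the first summand and as $KS_{M/S}$ on the second, each by the very definition of these morphisms as connecting homomorphisms. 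Hypothesis (i) then says exactly $\delta(\rho,\mathrm{id}) = \mu_\mathcal{L}\!\circ\!\rho + KS_{M/S} = 0$, so $(\rho,\mathrm{id})$ lifts to some $\overline{D}_0 \in \pi_*(\mathcal{W}(\mathcal{L})/\mathcal{O}_M)$.

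Next I will refine $\overline{D}_0$ to a genuine section of $\pi_*\mathcal{W}(\mathcal{L})/\mathcal{O}_S$. From the sequence $0 \to \mathcal{O}_M \to \mathcal{W}(\mathcal{L}) \to \mathcal{W}(\mathcal{L})/\mathcal{O}_M \to 0$ together with condition (iii), pushforward yields
\[
0 \to \pi_*\mathcal{W}(\mathcal{L})/\mathcal{O}_S \hookrightarrow \pi_*(\mathcal{W}(\mathcal{L})/\mathcal{O}_M) \xrightarrow{\delta_0} R^1\pi_*\mathcal{O}_M,
\]
so it suffices to kill $\delta_0 \circ \overline{D}_0$. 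By naturality, the restriction of $\delta_0$ to $\pi_*\mathcal{T}_{M/S}\subset \pi_*(\mathcal{W}(\mathcal{L})/\mathcal{O}_M)$ is precisely the relative Atiyah cup-product $\cup [\mathcal{L}]$, which is an isomorphism by the Welters hypothesis (ii). Hence a unique $\xi : \mathcal{T}_S \to \pi_*\mathcal{T}_{M/S}$ exists with $(\cup[\mathcal{L}])\circ \xi = -\delta_0\circ\overline{D}_0$; since $\xi$ lies in $\ker(s_2,\sigma_h)$, the adjusted $\overline{D} := \overline{D}_0 + \xi$ has the same symbols and now defines a section of $\pi_*\mathcal{W}(\mathcal{L})/\mathcal{O}_S$. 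For uniqueness, any two such lifts differ by a section of $\pi_*\mathcal{T}_{M/S}\cap \ker\delta_0 = \ker(\cup[\mathcal{L}]) = 0$, again by (ii). The induced projective connection on $\mathbb{P}(\pi_{e*}\mathcal{L})$ then sends $v\in\mathcal{T}_S$ to the operator $\overline{D}(v)$ acting on local sections of $\pi_{e*}\mathcal{L}$, with the scalar ambiguity in $\mathcal{O}_S$ killed by the projectivization.

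The hard part will be the identification in Step 1 of the two components of $\delta$ as $\mu_\mathcal{L}$ and $KS_{M/S}$: this requires recognizing $\mathcal{W}(\mathcal{L})/\mathcal{O}_M$ as simultaneously the pushout of the two defining extensions of $\mathcal{T}_{M/S}$, and tracking the signs and compatibilities through an octahedral-axiom-type diagram chase involving the Atiyah, Kodaira--Spencer, and symbol sequences at once.
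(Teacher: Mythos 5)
The paper does not prove this statement: Theorem \ref{thm:hitchimain} is quoted verbatim as one of the main results of van Geemen--de Jong \cite{VanGeemenDeJong:98}, so there is no internal proof to compare against. Your argument is a correct reconstruction of the proof in that reference, following the same two-step obstruction pattern: condition (i) kills the obstruction in $R^1\pi_*\mathcal{T}_{M/S}$ to lifting $(\rho,\mathrm{id})$ through the symbol sequence, and conditions (ii)--(iii) let you correct by a unique vertical first-order term to kill the remaining obstruction in $R^1\pi_*\mathcal{O}_M$, with injectivity of $\cup[\mathcal{L}]$ giving uniqueness. The only imprecision is cosmetic: as written, $\mathcal{W}(\mathcal{L})/\mathcal{O}_M$ surjects onto $\operatorname{Sym}^2\mathcal{T}_{M/S}\oplus\pi^*\mathcal{T}_S$ rather than $\pi^{-1}\mathcal{T}_S$, so one should either restrict to operators whose horizontal symbol lies in $\mathcal{T}_{M,\pi}$ (as van Geemen--de Jong do) or note that $\pi_*\pi^*\mathcal{T}_S=\mathcal{T}_S$ by hypothesis (iii); either repair leaves your argument intact.
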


In \cite{BBMP20}, the authors translate Hitchin's proof
of flatness of projective connections into the abstract 
formalism of  \cite{VanGeemenDeJong:98}.
In the set-up of Theorem \ref{thm:hitchimain}, 
they prove the following (see \cite[Thm.\ 4.8.2]{BBMP20}):

\begin{theorem}[{\sc Flatness criteria}]  \label{thm:flat}
	If the following three conditions are satisfied, then the
	projective connection that is a consequence of 
	Theorem \ref{thm:hitchimain} is flat. 
	\begin{enumerate} 
		\item For any local sections 
		$\theta_1$ and $\theta_2$ of $\mathcal{T}_S$, the symmetric vector
		fields $\rho(\theta_i)$ considered as functions on
		$\mathcal{T}^{\vee}_{M/S}$ Poisson commute (for the standard
		symplectic structure).
		\item The map $\mu_{\mathcal{L}}$ is injective.
		\item $\pi_{\ast}\mathcal{T}_{M/S}=0$.
	\end{enumerate}
\end{theorem}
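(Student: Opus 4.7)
The plan is to reduce Theorem \ref{thm:flat} to a curvature computation for the projective heat operator $\overline{D}$ produced by Theorem \ref{thm:hitchimain}. Given a local lift $\widetilde{D}: \mathcal{T}_S \to \pi_\ast \mathcal{D}^{\leq 2}(\mathcal{L})$, the curvature of the induced projective connection on $\mathbb{P}(\pi_\ast \mathcal{L})$ is measured by
\[
c(\theta_1,\theta_2) \;=\; [\widetilde{D}(\theta_1),\widetilde{D}(\theta_2)] \;-\; \widetilde{D}([\theta_1,\theta_2]),
\]
which a priori is a section of $\pi_\ast \mathcal{D}^{\leq 3}(\mathcal{L})$. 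Flatness of the projective connection amounts to showing that $c(\theta_1,\theta_2)$ is a section of $\pi_\ast \mathcal{O}_M = \mathcal{O}_S$, i.e.\ lies in the scalar ambiguity inherent in the choice of lift $\widetilde D$.

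First I would compute the top-order symbol $s_3(c(\theta_1,\theta_2))$ in $\pi_\ast \mathrm{Sym}^3 \mathcal{T}_{M/S}$. The standard symbol calculus of differential operators identifies this with the Poisson bracket $\{\rho(\theta_1), \rho(\theta_2)\}$, where $\rho(\theta_i)$ is viewed as a fiberwise quadratic function on $\mathcal{T}^\vee_{M/S}$ via $s_2 \circ \widetilde D = \rho$. Condition (i) (Poisson commutativity) kills this symbol, so $c(\theta_1,\theta_2) \in \pi_\ast \mathcal{D}^{\leq 2}(\mathcal{L})$.

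Next, I would extract the order-$2$ symbol $s_2(c(\theta_1,\theta_2)) \in \pi_\ast \mathrm{Sym}^2 \mathcal{T}_{M/S}$. Since $\widetilde D$ satisfies the Hitchin--van Geemen--de Jong equation $KS_{M/S} + \mu_{\mathcal{L}} \circ \rho = 0$, differentiating this identity against $\theta_1, \theta_2$ and using the cocycle nature of $\widetilde D$ yields an identity expressing $\mu_{\mathcal{L}} \bigl(s_2(c(\theta_1,\theta_2))\bigr)$ as zero in $R^1 \pi_\ast \mathcal{T}_{M/S}$. Condition (ii) (injectivity of $\mu_{\mathcal{L}}$) then forces $s_2(c(\theta_1,\theta_2)) = 0$, so $c(\theta_1,\theta_2) \in \pi_\ast \mathcal{D}^{\leq 1}(\mathcal{L})$. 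The latter sits in the short exact sequence $0 \to \mathcal{O}_S \to \pi_\ast \mathcal{D}^{\leq 1}(\mathcal{L}) \to \pi_\ast \mathcal{T}_{M/S} \to 0$, and condition (iii) collapses this to $\mathcal{O}_S$, which is precisely projective flatness.

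The main obstacle is the order-$2$ step: deriving the precise identity that places $s_2(c(\theta_1,\theta_2))$ into the kernel of $\mu_{\mathcal{L}}$ requires careful bookkeeping of the lift $\widetilde D$ and of the cocycle boundary that appears when passing from $\rho$ to $\widetilde D$. This is precisely the argument carried out in the nonparabolic case in \cite[Thm.\ 4.8.2]{BBMP20}; the parabolic adjustments affect only the moduli space $M = M^{par,rs}_{G}$ and the determinant line bundle $\mathcal{L}_\phi$ entering into the symbol and its compatibility with $\mu_{\mathcal{L}_\phi}$, but the abstract flatness mechanism is unchanged, so the proof transfers verbatim.
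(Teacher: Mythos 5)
Your proposal is correct and matches the paper's treatment: the paper gives no independent argument for Theorem \ref{thm:flat} but simply quotes it from \cite[Thm.\ 4.8.2]{BBMP20}, and your symbol-by-symbol curvature reduction (order $3$ killed by Poisson commutativity, order $2$ by injectivity of $\mu_{\mathcal{L}}$ via the van Geemen--de Jong identity, order $1$ by $\pi_\ast\mathcal{T}_{M/S}=0$) is exactly the mechanism of that cited proof, which you correctly observe transfers verbatim to the parabolic setting.
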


\section{Towards a parabolic Hitchin Symbol} \label{sec:symbol}

In this section we discuss the parabolic analog of the Hitchin symbol. 
This will turn out to be the symbol of a natural second order differential 
operator. The original case of (nonparabolic) vector bundles is due to Hitchin. 
We follow and generalize the discussion 
in \cite{BBMP20}. We begin by recalling the notion of a parabolic Atiyah algebra. 

\subsection{Parabolic bundles and their Atiyah algebras}

Let  $q: \Ccal\rightarrow S$ be a family of smooth projective curves with $n$
marked points given by disjoint sections
$p_1,\cdots,p_n : S\to \Ccal$ of $q$, and let $D=p_1+ \dots + p_n$ be the corresponding
relative divisor in $\Ccal$.
Let $\widehat{\pi}:\widehat{\Ccal} \rightarrow S$
be a family of $\Gamma$-Galois covers of the fibers of $\Ccal$,  ramified along $\widehat{D}$. In particular,
this comes with a natural projection map $p: \widehat{\Ccal}\rightarrow
\Ccal$ such that $p(\widehat{D})=D$. 
In order to analyze parabolic Atiyah algebras for families  of parabolic
bundles on $\Ccal$,
we shall use the notion of $\Gamma$-linearized bundles on the Galois cover
$\widehat \Ccal$.
The reader is referred  to Appendix \ref{sec:gammabundles}  for more details.

Let $\widehat{\Pcal}$ be a family of $\Gamma$-$G$-bundles on
$\widehat\Ccal$, and let $\Pcal$ be the family of parabolic $G$-bundles obtained by applying the invariant push-forward functor.
The \emph{relative parabolic Atiyah algebra} is given by: $$\ParAt_{{\Ccal}/S}(\Pcal):=
p_{*}^{\Gamma}(\At_{\widehat{\Ccal}/S} (\widehat{\Pcal}))\ ,$$ and the
\emph{strongly parabolic Atiyah algebra} is given by:
$$\SParAt_{{\Ccal}/S}({\Pcal}):=p_*^{\Gamma}(\At_{\widehat{\Ccal}/S}(\widehat{\Pcal})(-\widehat{D}))\ .$$
Similarly, we define the sheaf of \emph{parabolic endomorphisms}
$\ParEnd(\Pcal)$ by $p_*^{\Gamma}(\ad(\widehat{\mathcal{P}}))$,
and the \emph{strongly parabolic endomorphisms $\SParEnd(\Pcal)$} by
$p_*^{\Gamma}(\ad(\widehat{\mathcal{P}})(-\widehat{D}))$. 

Just as in
the case of parabolic vector bundles, these sheaves fit into the following fundamental exact sequences 
\begin{align}
\begin{split} \label{eqn:fund-atiyah}
0 \lra \ParEnd(\Pcal) \lra \ParAt_{{\Ccal}/S}(\Pcal)\lra
\mathcal{T}_{{\Ccal}/S}(-D)\lra 0\ ;\\
0 \lra\SParEnd(\Pcal)
\lra\SParAt_{{\Ccal}/S}(\Pcal)\lra\mathcal{T}_{{\Ccal}/S}(-D)\lra 0\ .
\end{split}
\end{align}
Also, as in the case of parabolic vector bundles we get the following quasi-Lie algebra:
\begin{equation}\label{eqn:quasiLiealgebra}
0 \lra\Omega_{{\Ccal}/S}
\lra(\ParAt_{{\Ccal}/S}(\Pcal)(D))^{\vee}\lra(\SParEnd(\Pcal)(D))^{\vee}\lra
    0\ . 
\end{equation}
The Cartan-Killing form $\kappa_{\gfrak}$ on $\gfrak \,=\, \text{Lie}(G)$ gives an  identification 
\begin{equation} \label{eqn:CK}
\nu_\gfrak^{-1}: (\SParEnd(\Pcal)(D))^{\vee}\isorightarrow
\ParEnd(\Pcal)\ .
\end{equation}
A more explicit description of these bundles in Lie theoretic terms goes as follows: Let 
$\mathfrak{n}_i$ be the nilradical 
of the Lie algebra of the parabolic subgroup $P_i$.  
Consider the adjoint bundle $\ad(\mathcal{P})$ of the parabolic bundle $\mathcal{P}$. 
The sheaf of strongly parabolic (respectively, parabolic) endomorphisms is
the subsheaf $\ad(\mathcal{P})$ such that the residue at $p_i$ lies in the Lie algebra $\mathfrak{n}_i$ 
(respectively, in Lie algebra of $P_i$) for each $1\leq i\leq n$.

\subsection{Some canonical maps}\label{sec:canonical}

Now assume the family  $\Ccal\rightarrow S$ to be versal with respect to the
divisor $D$.
Universal bundles on relative moduli spaces of bundles exist locally in the
\'etale topology, and moreover both the associated Atiyah algebra and the
adjoint bundle  glue together to extend  globally. 
For convenience of exposition we can therefore assume the existence of a universal bundle 
$\mathcal{P}$  on the family of curves
$\mathfrak{X}_{{G}}^{par}/{M}_{{G}}^{par,rs}$
with parabolic structure supported on a relative divisor $D$ base changed
to ${M}_{{G}}^{par,rs}$. 
We have the following useful diagram:
\begin{equation}\label{ud}
\begin{tikzcd}
\mathfrak{X}_{{G}}^{par}:= \Ccal \times_{S}{M}_{{G}}^{par,rs} \arrow[rd,"\pi_c"]\arrow[from=r, bend right,"p_i"']\arrow[r, "\pi_n"] \arrow[d, "\pi_w"]
& {M}_{{G}}^{par,rs} \arrow[d, "\pi_e" ] \\
\Ccal \arrow[r, "\pi_s"]\arrow[from=r, bend left,"p_i"]
& S.
\end{tikzcd}
\end{equation}
The above map $\pi_c:\Xfrak_G^{par}\to S$ is defined by $\pi_c:=\pi_s\circ\pi_w=\pi_e\circ\pi_n$. 
Recall the duality in \eqref{eqn:CK}.
There is a canonical inclusion map
\begin{equation}\label{equation:caninclusion}
\Spar(\Pcal) \hookrightarrow \Par(\Pcal)
\end{equation}
whose quotient is supported on $D$. 
Composing the evaluation map 
$$\pi_{n}^*\pi_{n*} \left(\Spar(\Pcal)
\otimes \pi_w^*\Omega_{\Ccal/S}(D) \right)\lra \Spar(\Pcal
) \otimes \pi_w^* \Omega_{\Ccal/S}(D)$$
followed by
\eqref{equation:caninclusion} (tensored with 
$\pi^*_{w}\Omega_{\Ccal/S}$), we obtain the following:
$$\pi_{n}^*\pi_{n*} \left(\Spar(\Pcal)
\otimes \pi_w^*\Omega_{\Ccal/S}(D) \right)\lra \Par(\Pcal)
\otimes \pi_{w}^*{\Omega}_{{\Ccal}/S}(D)\ .$$
Taking duals and  applying Serre duality, and then using the identification via
$\nu_{\gfrak}^{-1}$ in eq. \eqref{eqn:CK}, we get that
\begin{align*}
(\Par(\Pcal))^{\vee}\otimes
    \pi_w^*\mathcal{T}_{{\Ccal}/S}(-D)\lra&\,\,\pi_{n}^*\Bigl(\pi_{n*}
    \bigl(\Spar(\Pcal)
\otimes \pi_w^*\Omega_{\Ccal/S}(D) \bigr)^{\vee}\Bigr)\\
    \cong&\pi_{n}^*R^1\pi_{n*}\Bigl(\bigl(\Spar(\Pcal)(D)\bigr)^{\vee}\Bigr)
\cong \pi_n^*\bigl( R^1\pi_{n*} \Par(\Pcal) \bigr)\ .
\end{align*}
This, in turn, gives a map 
$\pi_{w}^*\mathcal{T}_{{\Ccal}/S}(-D)\to \Par(\Pcal)
\otimes \pi_n^*\left( (R^1\pi_{n*} \Par(\Pcal) )\right)$. 
Applying $R^1\pi_{n*}$ and the push-pull formula, we obtain a morphism
\begin{equation*}\label{equation:rewriting1}
R^1\pi_{s}^*\mathcal{T}_{{\Ccal}/S}(-D) \lra R^1\pi_{n*} \left( \Par(\Pcal)
\right)\otimes \left(R^1\pi_{n*} \left(\Par(\Pcal)\right)\right).
\end{equation*}
Further applying $\pi_{e*}$ and identifying $\pi_{e*}\mathcal{T}_{{M}^{par}/S}$, we get a map
\begin{equation}\label{equation:workingsymbol}
\rho_{sym}: R^1\pi_{s*} \mathcal{T}_{{\Ccal}/S}(-D)\lra \pi_{e*}\left(
\mathcal{T}_{{M}_{{G}}^{par,rs}/S}^{\otimes 2}\right).
\end{equation}

We briefly recall the notion of a strongly parabolic Higgs bundle on the
family $\Ccal\to S$.
Let $\mathcal{P}$ be a parabolic $G$ bundle on a curve $C$ with weights
$\bm \alpha$, and consider  the sheaf of strongly parabolic endomorphisms $\Spar(\Pcal)$. 
A strongly parabolic Higgs pair $(\Pcal, \theta)$ consists of a parabolic
bundle $\Pcal$ and a section $\theta$ of $\Spar(\Pcal)\otimes
\Omega_{\Ccal/S}(D)$. 
We refer the reader to \cite[Sec.\ 3--4]{BiswasRamanujan} for the notion of semistability and 
the construction of the moduli space $\mathcal{H}_{\bm \alpha, G}^{par,ss}$
(or  simply  denoted by $\mathcal{H}_{G}^{par,ss}$) (see also \cite[Sec.\
5]{BaragliaKamgarpourVarma:19}, \cite[Sec.\ 5]{Faltings:94}). 

The Hitchin map assigns to a parabolic Higgs pair $(\mathcal{P},\theta)$
the evaluation on  $\theta$ of  a  basis of invariant polynomials on $\gfrak$.
Since $G$ is simple, the lowest degree is quadratic; it
produces a map:
$$\operatorname{Hit}: \mathcal{H}^{par,ss}_{G} \lra\pi_{s*}{\Omega}_{\Ccal/S}^{\otimes 2}(D)
\ ,$$
where ${\Omega}_{\Ccal/S}^{\otimes 2}(D)$
is the space of holomorphic relative quadratic differentials with simple
poles along the divisor $D$. 
Now consider the multiplication map
$$R^1\pi_{n*}\mathcal{T}_{\mathfrak{X}_{G}^{par}/{M}_{G}^{par,rs}}(-D)\otimes
\pi_{n*}\bigl(\Spar(\Pcal)\otimes
\Omega_{\mathfrak{X}_{G}^{par}/{M}^{par,rs}}(D)\bigr)
\lra R^1\pi_{n*} \Par(\Pcal)\ .$$ 
This gives the following map:
$$
R^1\pi_{n*}\mathcal{T}_{\mathfrak{X}_{G}^{par}/{M}_{G}^{par,rs}}(-D)\lra
\bigl(\pi_{n*}\bigl(\Spar(\Pcal)\otimes
\Omega_{\mathfrak{X}_{G}^{par}/{M}_{G}^{par,rs}}(D)\bigr)\bigr)^{\vee}\otimes
R^1\pi_{n*} \Par(\Pcal)\ ,
$$
which, by relative Serre duality \eqref{eqn:CK}, and after applying $\pi_{e*}$ (see \eqref{ud})
together with symmetrization, gives a map 
\begin{equation}\label{rhit}
\rho_{Hit}: R^1\pi_{s*}\mathcal{T}_{\Ccal/S}(-D)\lra \pi_{e*}
    \operatorname{Sym}^2 \mathcal{T}_{{M}_{G}^{par,rs}/S}\ .
\end{equation}
Observe that the cotangent bundle $\mathcal{T}^{\vee}_{{M}^{par,rs}_{G}/S}$ 
embeds into $\mathcal{H}^{par,ss}_{G}$.
We rewrite the Hitchin map via the following commutative diagram as in the nonparabolic case: 

\begin{equation}\label{equation:Hitchinmapasincommonlit}
\begin{tikzcd}
\mathcal{T}^{\vee}_{{M}^{par,rs}_{G}/S}\arrow[r, "\Delta"] \arrow[rd, "\operatorname{Hit}"']
& \mathcal{T}^{\vee}_{{M}^{par,rs}_{G}/S}
\otimes \mathcal{T}^{\vee}_{{M}^{par,rs}_{G}/S}  \arrow[d, "\operatorname{Tr}"] \\
& \pi_{n*}{\Omega}^{\otimes
    2}_{\mathfrak{X}^{par}_{G}/{M}_{G}^{par,rs}}(D)\ .
\end{tikzcd}
\end{equation}
Here, $\Delta$ is the diagonal map, and the operator $\operatorname{Tr}$ is the
pairing given by symmetric form on $\Spar(\Pcal)$ defined by the
Killing form $\kappa_{\gfrak}$; recall that $\mathcal{T}^{\vee}_{{M}^{par,rs}_{G}/S}$ is given by sections of
$\Spar(\Pcal) \otimes \Omega_{\mathfrak{X}^{par}_{{G}}/{M}_{{G}}^{par,rs}}(D)$. Composing with $\pi_{e*}$ and applying
relative Serre duality we get that the dual of the vertical map $\operatorname{Tr}$ in
\eqref{equation:Hitchinmapasincommonlit} is $\rho_{Hit}$ in \eqref{rhit}.  
The two maps $\rho_{Hit}$ and $\rho_{sym}$ (constructed in \eqref{equation:workingsymbol}) are hence identified.

\begin{proposition}\label{ap} The map $\rho_{Hit}$ in \eqref{rhit} coincides with $\rho_{sym}$ given in 
\eqref{equation:workingsymbol}. 
\end{proposition}

Proposition \ref{ap} was proven in the (nonparabolic) vector  bundle case in  \cite[Lemma 4.3.2]{BBMP20}.  

\subsection{Deformation of ${M}_{{G}}^{par,rs}$ via pointed curves}

Recall that we have an isomorphism between the moduli space of
parabolic bundles with fixed parabolic weights $\bm{\lambda}$ on a curve
$C$ and the moduli space of $\Gamma$-$G$-bundles on a Galois cover 
$\widehat{C}\to C$ of type $\bm\tau$. Here, the cover $\widehat{C}$ and type
are related to the parabolic weights. We refer the reader to Appendix \ref{sec:gammabundles} for more details.
We will need the following lemma, the proof of which is straightforward.

\begin{lemma}
\label{lemma:Grothendieckcollapse}
There is a natural isomorphism $\pi^*_{w}\mathcal{T}_{\Ccal/S}(-D)\, \stackrel{\sim}{\longrightarrow}\, \TXM(-D)$,
where $\pi_w$ is the map in \eqref{ud}. Furthermore:
\begin{enumerate}
	\item $R^1\pi_{c*} \bigl(\TXM (-D)\bigr)\cong
        R^1\pi_{s*}\bigl(\mathcal{T}_{\Ccal/S}(-D)\bigr)$;
	\item $R^1\pi_{n*} \bigl( \TXM (-D)\bigr)\cong \pi_{e}^* R^1\pi_{c*}
        \bigl(\TXM(-D)\bigr)$,
\end{enumerate}
where the maps are as in \eqref{ud}.
\end{lemma}

Consider the relative parabolic Atiyah algebra:
$${}^{par}\At_{\mathfrak{X}^{par}_{G}/{M}_{G}^{par,rs}}(\mathcal{P})
:=p_*^{\Gamma}\bigl(\At_{\widehat{C}\times_{S}{M}^{par,rs}_{G}/
{M}_{G}^{par,rs}}(\widehat{\Pcal})\bigr)\ ,$$ 
and the fundamental exact sequence (cf.\ \eqref{eqn:fund-atiyah}) known as the relative Atiyah sequence:
\begin{equation} \label{eqn:fes1}
0\lra \Par(\Pcal) \lra
{}^{par}{\At}_{\mathfrak{X}^{par}_{G}/{M}_{G}^{par,rs}}(\Pcal)
\lra \TXM(-D) \lra 0\ .
\end{equation}
Now since $\pi_{n*} \TXM(-D)=0$ and $R^2\pi_{n*} \Par(\Pcal)=0$, 
applying $R^1\pi_{n*}$ to the above we get the short exact sequence 
\begin{equation}\label{equation:basicatiyah}
0 \to R^1\pi_{n*} \Par(\Pcal)
\lra R^1\pi_{n*}\bigl( {}^{par}\AxM (\mathcal{P})\bigr)\lra R^1\pi_{n*}
\TXM (-D)\to 0\ .
\end{equation}
The relative extension class of the exact sequence in \eqref{equation:basicatiyah} is an element
\begin{align}
\begin{split} \label{eqn:alphaclass}
\alpha(\mathcal{P}, \bm{\lambda}) &\in  R^1\pi_{e*}(( R^1\pi_{n*}\TXM (-D))^{\vee}\otimes R^1\pi_{n*}\Par(\Pcal))\\
&\cong 
R^1\pi_{e*}(\pi_e^*(R^1\pi_{s*}\mathcal{T}_{\mathcal{C}/S}(-D))^{\vee}\otimes
R^1\pi_{n*}\Par(\Pcal))\\
&\cong R^1\pi_{e*}(\pi_e^*(R^1\pi_{c*}\TXM(-D))^{\vee}\otimes
R^1\pi_{n*}\Par(\Pcal))\ .
\end{split}
\end{align}
The last two isomorphisms are constructed using Lemma \ref{lemma:Grothendieckcollapse}.
The exact sequence of tangent sheaves induced by the map $\pi_e:
{M}_{G}^{par,rs}\rightarrow S$ is: 

\begin{equation} \label{eqn:tangent}
0\lra \mathcal{T}_{{M}_{G}^{par,rs}/S} \lra
\mathcal{T}_{{M}_{G}^{par,rs}}\lra
\pi^*_{e}\mathcal{T}_{S}\lra 0\ .
\end{equation}
Since  by assumption the family of pointed curves is versal, 
the Kodaira-Spencer map 
gives an isomorphism 
$KS_{\Ccal/S}:\mathcal{T}_S\cong R^1\pi_{s*} \mathcal{T}_{\Ccal/S}(-D)$,
which, 
pulling back via $\pi_e$ and using  Lemma \ref{lemma:Grothendieckcollapse}, gives 
\begin{equation}\label{equation:rewritingpullback}
\pi_{e}^*\mathcal{T}_S \cong \pi_e^*
    R^1\pi_{s*}\bigl(\mathcal{T}_{\Ccal/S}(-D)\bigr)\cong R^1\pi_{n*}
    (\TXM(-D))\ .
\end{equation}
The identification in \eqref{equation:rewritingpullback} and the
equivariant 
version of  \cite[eq.\ (3.10)]{ST} together produce the following commutative
diagram,
which relates \eqref{equation:basicatiyah} and \eqref{eqn:tangent}:
\begin{equation*}\label{equation:fundamentalcom}
\begin{tikzcd}
 R^1\pi_{n*}\Par(\Pcal)
\arrow[r,hook] &R^1\pi_{n*} {}^{par}\AxM(\mathcal{P}) \arrow[r, ->>]
    &R^1\pi_{n*}\left(\TXM(-D)\right)\\
\TMS \quad	\arrow[r,hook] \arrow[u,"\cong"] &
    \mathcal{T}_{{M}_{G}^{par,rs}}\arrow[r,->>]\ar[u] &
    \arrow[u,"\cong",swap]   \pi_e^*\mathcal{T}_{S}  
\end{tikzcd}
\end{equation*}
The Kodaira-Spencer class
for the family $\pi_e:{M}_{G}^{par,rs}\rightarrow S$ gives a map
$$KS_{{M}_{G}^{par,rs}/S}: \mathcal{T}_S 
\longrightarrow R^1\pi_{e*} \TMS\cong R^1\pi_{e*}\left(R^1\pi_{n*}\Par(\Pcal) \right).$$
The cup product by $\alpha:=\alpha(\Pcal, \bm \lambda)$ produces maps 
\begin{equation*}
\begin{tikzcd}R^1\pi_{c*}\bigl(\TXM(-D)\bigr)\ar[d,"\cup\, \alpha"] \\
R^1\pi_{c*}\bigl(\TXM(-D)\bigr)\otimes R^1\pi_{e*}(\pi_e^*(R^1\pi_{c*}\TXM(-D))^{\vee}\otimes
R^1\pi_{n*}\Par(\Pcal))\ar[d,"\cong"]\\
\Bigl(R^1\pi_{c*}\bigl(\TXM(-D)\bigr)\otimes
    \bigl((R^1\pi_{c*}\TXM(-D)\bigr)\Bigr)^{\vee}\otimes
R^1\pi_{e*}(R^1\pi_{n*}\Par(\Pcal))\ar[d]\\
R^2\pi_{c*}\Par(\Pcal)\cong R^1\pi_{e*}\bigl(R^1\pi_{n*}\Par(\Pcal)\bigr)\ .
\end{tikzcd}
\end{equation*}
The isomorphism in the last step uses the identification
$R^1\pi_{n*}\Par(\Pcal)\cong\TXM$,
along with the facts that $M_{G}^{par,rs}$ has no global tangent vector fields relative to $S$ (cf. \ Lemma \ref{lem:tangentzero}) and $\pi_{n*}\Par(\Pcal)$ is zero. This forces the Grothendieck spectral sequence to collapse. 

We may summarize the  discussion and identifications above with the
following commutative diagram:
\begin{equation}\label{equation:Phidefinition}
\begin{tikzcd}
\mathcal{T}_S \arrow[r,"\cong"', "KS_{\Ccal/S}"]\arrow[rd,
"KS_{{M}_{G}^{par,rs}/S}"'] &  R^1\pi_{s*}
(\mathcal{T}_{\Ccal/S}(-D)) \arrow[d, "\Phi"] \arrow[r,
"\cong"] & R^1\pi_{c*} \bigl(\TXM( -D)\bigr) \arrow[ld,
"\cup\, \alpha"]\\ 
&  R^1\pi_{e*}\TMS\cong R^1\pi_{e*}\bigl(R^1\pi_{n*}\Par(\Pcal)\bigr)\ .&
\end{tikzcd}
\end{equation}
Here $\Phi$ is the map induced by the cup product with the class $\alpha(\mathcal{P}, \bm{\lambda})$
(see eq. \eqref{eqn:alphaclass}) preceded by the isomorphism of
$R^1\pi_{s*}\bigl(\mathcal{T}_{\Ccal/S}(-D)\bigr)$ with $R^1\pi_{c*}\bigl(
\TXM(-D)\bigr)$ given in Lemma \ref{lemma:Grothendieckcollapse}. 

\subsection{A fundamental commutative diagram} 

Consider  $R^1\pi_{n*}$ of the sequence \eqref{eqn:fund-atiyah} applied to
$\SParAt_{\mathfrak{X}^{par}_{G}/{M}^{par,rs}_{G}}(\Pcal)$, where $\pi_n$ is the map in \eqref{ud}:
\begin{equation}
    \begin{tikzcd}
        0\arrow[r]
        &	R^1\pi_{n*} (\pi_{w}^*\Omega_{\Ccal/S}) \arrow[r]
        \arrow[d, phantom, ""{coordinate, name=Z}]
        & R^1\pi_{n*}((\SParAt_{\mathfrak{X}^{par}_{G}/{M}^{par,rs}_{G}}(\Pcal)(D))^{\vee})
        \arrow[dl, rounded corners, to path={ --
        ([xshift=2ex]\tikztostart.east) |- (Z)  -|
        ([xshift=-2ex]\tikztotarget.west) -- (\tikztotarget)}] 
        \\
        &          R^1\pi_{n*}((\Spar(\Pcal)(D))^{\vee})\lra 0\ . &
    \end{tikzcd}
    \label{eqn:fes2}
\end{equation}
Let $\beta:=\beta(\mathcal{P},\bm{\lambda})$ be the relative extension class with respect to $\pi_{e}$ (see \eqref{ud}) of the extension \eqref{eqn:fes2}.
Then we have a diagram:

\begin{equation}\label{equation:imptriangle}
\begin{tikzcd}
R^1\pi_{s*}\mathcal{T}_{\Ccal/S}(-D) \arrow[r,"-\Phi"] \arrow[rd, "\rho_{sym}",
swap] & R^1\pi_{e*}\TMS\\
& \pi_{e*}\bigl(\operatorname{Sym}^2 \TMS\bigr)  \arrow[u,"\cup\, \beta"']\ .
\end{tikzcd}
\end{equation}
We have the following key result which relates all  three maps. 
In the  (nonparabolic) vector bundle case, this was proven in \cite[Prop.\ 4.7.1]{BBMP20}.

\begin{theorem} \label{thm:cup-product}
    The diagram  \eqref{equation:imptriangle} commutes. In other words,
$$\Phi + \cup\, \beta(\mathcal{P},\bm{\lambda})\circ \rho_{sym}=0$$
as a morphism $R^1\pi_{s*} \mathcal{T}_{\Ccal/S}\rightarrow
R^1\pi_{e*}\TMS$.   
\end{theorem}

\begin{proof}
Pull back the short exact sequence in \eqref{eqn:fes2} to $\mathfrak{X}^{par}_{G}$ via the map $\pi_{n}$
in \eqref{ud}.
Tensoring the resulting sequence with $\Par(\Pcal)$ we obtain the following exact sequence 
\begin{equation*}\label{equation:srew}	\adjustbox{scale=1.0}{
	\begin{tikzcd}
	\Par(\Pcal)\otimes \pi_{n}^*\left(R^1\pi_{n*}\pi_{w}^*\Omega_{\Ccal/S}\right)\arrow[r,hook]
	&\Par(\Pcal) \otimes \pi_{n}^*\bigl(R^1\pi_{n*}\bigl( (  
	\SParAt_{\mathfrak{X}^{par}_{G}/{M}^{par,rs}_{G}}(\Pcal)(D))^{\vee}\bigr)\bigr) \arrow[d, twoheadrightarrow]\\
	& \Par(\Pcal) 
			\otimes\pi_n^*\bigl(R^1\pi_{n*}\bigl((\Spar(\Pcal)(D)
			)^{\vee}\bigr)\bigr)\ .
	\end{tikzcd}
}
\end{equation*}
Using $\kappa_{\gfrak}$, we can rewrite this as 

\begin{equation*}\label{equation:srew1}
\adjustbox{scale=.9}{
	\begin{tikzcd}
	\Par(\Pcal)\otimes
		\pi_{n}^*\left(R^1\pi_{n*}\pi_{w}^*\Omega_{\Ccal/S}\right)\arrow[r,hook]&\Par(\Pcal)
        \otimes\pi_{n}^*\bigl(R^1\pi_{n*}\bigl( ( 
			\SParAt_{\mathfrak{X}^{par}_{G}/{M}^{par,rs}_{G}}(\Pcal)(D))^{\vee}\bigr)\bigr)\arrow[d, twoheadrightarrow]\\
	& \Par(\Pcal) \otimes\pi_n^*\left(R^1\pi_{n*}\left(\Par(\mathcal{P})\right)\right).
	\end{tikzcd}}
\end{equation*}
The assumptions ensure that $R^1\pi_{n*}\pi_{w}^*\Omega_{\Ccal/S}=\mathcal{O}_{{M}^{par,rs}_{G}}
$.  Dualize \eqref{eqn:fes1} to get
\begin{equation*}
0 \lra \pi_w^*\Omega_{\Ccal/S}(D) \lra
\bigl(\ParAt_{\mathfrak{X}_{G}^{par}/{M}^{par,rs}_{G}}(\Pcal)\bigr)^{\vee}\lra
\Par(\Pcal)^{\vee}\lra 0\ .
\end{equation*}
Tensoring by $\Par(\Pcal)\otimes \pi^*_{w}\mathcal{T}_{\Ccal/S}(-D)$ and taking the duals (outside bracket)
we get the short exact sequence 
\begin{equation*}\label{equation:reneww}
\adjustbox{scale=0.85}{
	\begin{tikzcd}
        0\lra 
	\ParEnd(\mathcal{P}) \arrow[r]
        \arrow[d, phantom, ""{coordinate, name=Z}]
        &{{\Par(\Pcal)\otimes}{
			\bigl((\AxxM )\otimes
			\pi_w^*\Omega_{\Ccal/S}(D)\bigr)^{\vee}}}
        \arrow[dl, rounded corners, to path={ --
        ([xshift=2ex]\tikztostart.east) |- (Z)  -|
        ([xshift=-2ex]\tikztotarget.west) -- (\tikztotarget)}] 
       \\ 
	{{\Par(\Pcal)\otimes}{\left(\Par(\Pcal)\otimes 
            \pi_w^*\Omega_{\Ccal/S}(D)\right)^{\vee}}}
            \lra 0\ .
            &
	\end{tikzcd}}
\end{equation*}
Now observe that the dual of the evaluation gives maps 
\begin{align*}
\bigl(\AxxM\otimes \pi_w^*\Omega_{\Ccal/S}(D)\bigr)^{\vee}&\lra
    \bigl(\pi_n^*\pi_{n*}\bigl(\AxxM\otimes
    \pi_w^*\Omega_{\Ccal/S}(D)\bigr)\bigr)^{\vee}\\
&=\pi_n^*\bigl(\pi_{n*}\bigl(\AxxM\otimes
    \pi_w^*\Omega_{\Ccal/S}(D)\bigr)\bigr)^{\vee}\\
&\cong \pi_n^*\bigl( R^1\pi_{n*}\bigl(( \AxxM(D))^{\vee}\bigr)\bigr)\\
&\lra \pi_n^*\bigl( R^1\pi_{n*}\bigl((
    \AXM(\mathcal{P})(D))^{\vee}\bigr)\bigr)\ .
\end{align*}
In the above equation we have used the isomorphism 
$$R^1\pi_{n*}((\AxxM(D))^{\vee})\cong (\pi_{n*}(\AxxM(D)\otimes
\pi_w^*\Omega_{\Ccal/S}))^{\vee}$$ coming from relative Serre duality and
the dual of the natural inclusion map $$\AXM(\mathcal{P}) \hookrightarrow
\AxxM\ .$$

We now reverse engineer the construction of the Hitchin morphism $\rho_{sym}$:
\begin{equation*}\label{equation:compensatingforthehitchintrick}
\begin{aligned}
\left(\Par(\Pcal)\otimes \pi_w^*\Omega_{\Ccal/S}(D)\right)^{\vee}
&\lra \pi_n^*\pi_{n*}\left(\left(\Par(\Pcal)(D) \otimes \pi_{w}^*\Omega_{\Ccal/S}\right) \right)^{\vee}\\
&\lra \pi_n^*\pi_{n*}\left(\left(\Spar(\Pcal)(D) \otimes \pi_{w}^*\Omega_{\Ccal/S}\right) \right)^{\vee}\\
&\cong
\pi_n^*\left(R^1\pi_{n*}\left(\left(\Spar(\mathcal{P})(D)\right)^{\vee}\right)
\right)\qquad \mbox{(by relative Serre duality)}\\
&\cong \pi_n^*\left(R^1\pi_{n*}\left(\Par(\mathcal{P})  \right)\right) \qquad \mbox{(by trace pairing)}.
\end{aligned}
\end{equation*}
Consider the natural inclusion map $\pi_{w}^{*}\mathcal{T}_{\mathcal{C}/S}(-D) \hookrightarrow \Par(\Pcal)\otimes
\Par(\Pcal)^{\vee}{\otimes
	\pi_w^*\mathcal{T}_{\Ccal/S}(-D)}$, and pull back the short exact sequence 
$$
\Par(\Pcal)\hookrightarrow {{\Par(\Pcal)\otimes
		(\AxxM)^{\vee}}{\otimes
		\pi_w^*\mathcal{T}_{\Ccal/S}(-D)}} \twoheadrightarrow ( \Par(\Pcal) \otimes \pi_w^*\Omega_{\Ccal/S}(D))^{\vee}.
$$
Finally, by \cite[Lemma 4.5.1]{BBMP20}, we obtain an isomorphism of the extensions: 
\begin{equation}\label{equation:stupidextension}
\adjustbox{scale=0.9}{
	\begin{tikzcd}
	\Par(\Pcal) \arrow[r,hook] \arrow[d, equal] & \AxxM \arrow[r,twoheadrightarrow,"(-1)"] \arrow[d, hook]& \pi_w^*\mathcal{T}_{\Ccal/S}(-D)  \arrow[d, hook]\\
	\Par(\Pcal) \arrow[r,hook] \arrow[d,equal]& {{\Par(\Pcal)\otimes
			(\AxxM)^{\vee}}{\otimes
			\pi_w^*\mathcal{T}_{\Ccal/S}(-D)}} \arrow[r,twoheadrightarrow ] \arrow[d,equal] &
	{{\Par(\Pcal)\otimes}{
			( \Par(\Pcal) \otimes \pi_w^*\Omega_{\Ccal/S}(D))^{\vee}}} \arrow[d,equal]\\
	\Par(\Pcal) \arrow[r,hook] & {{\Par(\Pcal)\otimes}{
			(\AxxM\otimes \pi_w^*\Omega_{\Ccal/S}(D))^{\vee}}}
	\arrow[r,twoheadrightarrow ] & {{\Par(\Pcal)\otimes}{
			( \Par(\Pcal) \otimes \pi_w^*\Omega_{\Ccal/S}(D))^{\vee}}}.
	\end{tikzcd}}
\end{equation}
Here, the minus sign $(-1)$ indicates  the negative of the projection map.
Following the case of vector bundles in
\cite{BBMP20}, after composing we arrive at a commutative diagram
\begin{equation}\label{equation:snakingaround}
\adjustbox{scale=0.9}{
	\begin{tikzcd}[column sep=small]
	\Par(\Pcal) \arrow[r,hook]\arrow[d, equal] & \AxxM\arrow[r,twoheadrightarrow,"(-1)"]\arrow[d] &\pi^*_w\mathcal{T}_{\Ccal/S}(-D)\arrow[dd]\\
	{	\Par(\Pcal)}\arrow[r,hook]\arrow[dd,equal] & \Par(\Pcal) \otimes
        \bigl( \pi_{n}^*\pi_{n*}( \AxxM\otimes
        \pi_w^*\Omega_{\Ccal/S}(D))\bigr)^{\vee} \arrow[dd]\arrow[rd,twoheadrightarrow, bend right=10] &  \\
	&&\Par(\Pcal)\otimes
	\bigl( \pi_{n}^*\pi_{n*}	 \Par(\Pcal) \otimes
        \pi_w^*\Omega_{\Ccal/S}(D)\bigr)^{\vee}\arrow[dd]\\
	\Par(\Pcal)\arrow[r,hook] & {{\Par(\Pcal) \otimes }
		{ \pi_{n}^*\bigl(R^1\pi_{n*}\bigl( (
			\AXM(\Pcal)(D))^{\vee}\bigr)\bigr)}}
	\arrow[dr,twoheadrightarrow, bend right=10] &\\&& {{\Par(\Pcal)
			\otimes}{ \pi_n^*\bigl(R^1\pi_{n*}(\Par(\Pcal))\bigr)}}.
	\end{tikzcd}}
\end{equation}
Now we take $R^1\pi_{n*}$ of the exact sequences in the first and third
rows in \eqref{equation:snakingaround} to obtain
\begin{equation}\label{equation:movingforward}
\adjustbox{scale=0.88}{
	\begin{tikzcd}
	R^1\pi_{n*} \Par(\Pcal) \arrow[d,equal] \arrow[r,hook] &
	R^1\pi_{n*}(\AxxM)
	\arrow[d]\arrow[r,twoheadrightarrow,"(-1)"]& R^1\pi_{n*} (\pi_w^*\mathcal{T}_{\Ccal/S}(-D))\arrow[d]\\
	R^1\pi_{n*}\Par(\Pcal) \arrow[r,hook] &
	{{R^1\pi_{n*}\Par(\Pcal)\otimes
		}{R^1\pi_{n*}(\AXM(\Pcal)(D))^{\vee}}}\arrow[r,twoheadrightarrow]&
	{{R^1\pi_{n*}\Par(\Pcal) \otimes} {R^1\pi_{n*}(\Par(\Pcal))}}.
	\end{tikzcd}}
\end{equation} 
The connecting homomorphism for $(\pi_{e})_\ast$ gives 
\begin{equation*}
\begin{tikzcd}
R^1\pi_{s*}\mathcal{T}_{\Ccal/S}(-D) \arrow[r,"-\Phi"]\arrow[d, "\rho_{sym}"] & R^1\pi_{e*} \TMS \ar[d,equal] \\
\pi_{e*} \bigl( \TMS \otimes \TMS \bigr) \arrow[r]  & R^1\pi_{e*} \TMS\ .
\end{tikzcd}
\end{equation*}
The negative sign $-\Phi$ 
appears above due to the factor $(-1)$ in  \eqref{equation:stupidextension}; recall that $\Phi$ (see eq.
\eqref{equation:Phidefinition}) is the 
connecting homomorphism for the direct image by $\pi_e$ of
the exact sequence in \eqref{equation:basicatiyah}.
The proof of the theorem will be complete if we can show that the underlying map is 
$\cup\, \beta(\mathcal{P},\bm{\lambda})$.  But this follows 
from the fact that the bottom row of \eqref{equation:movingforward} is 
just the exact sequence 
$$
    \begin{tikzcd}
        0\arrow[r]& \mathcal{O}_{{M}^{par}_{G}}\cong
R^1\pi_{n*}\Omega_{\mathfrak{X}^{par}_{G}/{M}^{par,rs}_{G}}
        \arrow[r] 
        \arrow[d, phantom, ""{coordinate, name=Z}]
        &
R^1\pi_{n*}(\SParAt_{\mathfrak{X}^{par}_{G}/{M}^{par,rs}_{G}}(\Pcal)(D))^{\vee}
        \arrow[dl, rounded corners, to path={ --
        ([xshift=2ex]\tikztostart.east) |- (Z)  -|
        ([xshift=-2ex]\tikztotarget.west) -- (\tikztotarget)}] 
\\
        &
         R^1\pi_{n*} \Par(\Pcal) \lra 0 &
    \end{tikzcd}
    $$
tensored with $\TXM$, and $\beta(\Pcal, \bm{\lambda})$
is the relative extension class of the above with respect to $\pi_e$. 
\end{proof}

\section{Cupping with the parabolic determinant of cohomology}

In this section, we state and prove a 
key result that compares the cupping map by the class of the parabolic
determinant of cohomology to that of the usual determinant of cohomology.
This will be crucial for later arguments.
Let $\vec{P}=(P_1,\dots, P_n)$ be an $n$-tuple of standard parabolic
subgroups, and consider the stack $\mathcal{P}ar_{G}(C,\vec{P})$ of quasi parabolic bundles on a curve as recalled in Definition \ref{def:quasiparabolicstack} and let $\operatorname{Det}(\mathcal{V})$ (or simply $\operatorname{Det}$) denote the determinant of cohomology line bundle on a scheme $T$ parametrizing a family
$\Vcal$ of vector bundles on a smooth projective curve $C$. 
Recall (cf.\  Proposition \ref{prop:Laszlo}) that any line bundle on $\mathcal{P}ar_{G}(C,\vec{P})$ is of the 
form $\operatorname{Det}(\mathcal{E}(\mathcal{V}))^{\otimes a}\bigotimes \mathscr{K}$, where $\mathcal{E}(\mathcal{V})$ is a vector bundle
associated to a chosen representation $\phi: G \rightarrow \SL(V)$, $a \in \mathbb{Q}$ and $\mathscr{K} \in 
\operatorname{Pic}(G/P_1\times\cdots \times G/P_n)\otimes \mathbb{Q}$. We
will refer to the rational number $a$ as the \emph{level} (see 
Definition \ref{def:parabolicdeteG}).

\begin{theorem}\label{thm:indepdence}
	Let $\mathbb{L}$ be an element of
	$\operatorname{Pic}(M^{par,rs}_{G, \bm{\beta}})\otimes \mathbb{Q}$ of
	level $a$. Then as linear maps
	$\pi_{e*}\operatorname{Sym}^2\mathcal{T}_{M^{par,rs}_{G, \bm{\beta}}/S}\,
	\to\, R^1\pi_{e*}\mathcal{T}_{M^{par,rs}_{G,\bm{\beta}}/S}$, we have:  $\cup\, [\mathbb{L}]\,=\,\cup\, a [\operatorname{Det}]$, where $\operatorname{Det}$ is the determinant of cohomology (nonparabolic) line bundle. 
\end{theorem}

Theorem \ref{thm:indepdence} is proved in several steps. The strategy of the proof is to reduce to 
the case of parabolic vector bundles with full flags and apply the technique of 
abelianization by restricting to generic fibers of the Hitchin map.

\subsection{Reduction to the $\SL_r$ case}

Since $G$ is simple (hence semisimple), any short exact sequence of finite dimensional $G$-modules splits. In particular,
for a faithful irreducible $G$-module $V$, the $G$-module $\text{End}(V)$ decomposes as
${\mathfrak g}\oplus W_0$. Fix a complement $W_0$ of the $G$-submodule $\mathfrak g$.
Given an injective homomorphism $G\, \hookrightarrow\, \SL_r({\mathbb C})$,
we have an
embedding $M^{par,ss}_{G,\bm \beta }\hookrightarrow M^{par,ss}_{\SL_r,\bm
\alpha}$ which restricts to a map $f\,:\, M^{par,rs}_{G,\bm \beta
}\,\to\, M^{par,s}_{\SL_r,\bm \alpha}$.
Using the splitting  of the $G$-module ${\slfrak}_r({\mathbb C})$,  the tangent bundle $f^* M^{par,s}_{\SL_r,\bm \alpha}$
splits as
$f^*\mathcal{T}_{M^{par,s}_{\SL_r, \bm{\alpha}}/S}\,=\, \mathcal{T}_{M^{par,rs}_{G, \bm \beta }/S}\oplus W$. This gives
splittings of tensor powers, duals etc. We have the following commutative diagram:
\begin{equation}\label{e21}
\begin{tikzcd}
\pi_*\operatorname{Sym}^2 \mathcal{T}_{ M^{par,s}_{\SL_r,\bm{\alpha}}/S}
\arrow[r, "\cup \mathbb{L}"]\arrow[d]& R^1\pi_*\mathcal{T}_{ M^{par,s}_{\SL_r, \bm{\alpha}}/S}\arrow[d]\\
{\pi_{G}}_*\operatorname{Sym}^2 \mathcal{T}_{ M^{par,rs}_{G, \bm{\beta}}/S}
\arrow[r,"\cup \mathbb{L}"]&R^1{\pi_G}_*\mathcal{T}_{
        M^{par,rs}_{G,\bm{\beta}}/S}\ ,
\end{tikzcd}
\end{equation}
where $\pi\,:\, M^{par,s}_{\SL_r, \bm \alpha}\,\to\, S$ and
$\pi_G\,:\, M^{par,rs}_{G, \bm \tau}\,\to\, S$ are the 
projections (this was earlier denoted by $\pi_{e}$, but here we simply
write $\pi$ and $\pi_G$); the vertical maps in \eqref{e21} are given by the above mentioned splittings.
Here, $\mathbb{L}$ is an element of the rational Picard group of $M^{par,s}_{\SL_r,\bm 
\alpha}$, and $\pi_G\,=\,f\circ \pi$. The homomorphism
$\pi_*\operatorname{Sym}^2 \mathcal{T}_{ M^{par,s}_{\SL_r,\bm{\alpha}}/S} \rightarrow 
{\pi_{G}}_*\operatorname{Sym}^2 \mathcal{T}_{ M^{par,rs}_{G, \bm{\beta}}/S}$ in \eqref{e21} is surjective.
Thus we have proved the following proposition:

\begin{proposition}\label{prop:sum0}
Consider two elements $\mathbb{L}_1$ and 
$\mathbb{L}_2$ in $\operatorname{Pic}(M^{par,s}_{\SL_r,\bm \alpha})\otimes \mathbb{Q}$. If the maps 
$\cup [\mathbb{L}_1]$ and $\cup[\mathbb{L}_2]$ agree on $\pi_*\operatorname{Sym}^2 \mathcal{T}_{ M^{par,s}_{\SL_r,\bm{\alpha}}/S}$, 
then they also agree on ${\pi_{G}}_*\operatorname{Sym}^2 \mathcal{T}_{ 
	M^{par,rs}_{G,\bm{\beta}}/S}$.
\end{proposition}

\subsection{Reduction to the $\SL_r$ with full flags}

In this step, we will show that in order to prove Theorem \ref{thm:indepdence}
it is enough to assume that $\bm \alpha$ corresponds to weights for full flags. This step is only required when $r>2$.

\subsubsection{Changing weights without changing stability}

Let $D\,=\, \{p_1,\, \cdots ,\, p_n\}\, \subset\, C$ be the parabolic divisor.
Consider parabolic vector bundles of rank $r$. For any $1\, \leq\, i\, \leq\, n$, let
\begin{equation}\label{j2}
\alpha_{i,j}\,=\,m_{i,j}/\ell,\ \ \, 1\, \leq\, j\, \leq\, r,
\end{equation}
be the parabolic weights at
$p_i$, where $m_{i,j}$ and $\ell$ are nonnegative integers. Note that for any
$i$, the integers $m_{i,j}$, $1\, \leq\, j\, \leq\, r$, need not be distinct and the weights are assigned to full flags. 
We will reformulate a general notion of parabolic bundles for which the quasiparabolic flags are not necessarily complete in the following way: 
We will set the quasiparabolic flag at each $p_i$ to be complete flags, but two different terms in the filtration can have same parabolic weight. This
reformulation does not alter any of the stability and semistability conditions.

Fix a vector bundle $E$ of rank $r$ on $X$. Let $E_*$ be a parabolic structure
on $E$ of the above type. Let $E'_*$ be another parabolic bundle satisfying the
following conditions:
\begin{enumerate}
	\item The underlying holomorphic vector bundle for $E'_*$ is $E$ itself,
	
	\item the quasiparabolic flag for $E'_*$ coincides with that of $E_*$ at each $p_i$
	(recall that the quasiparabolic flags are complete but two different subspaces of
	$E_{p_i}$ can have same parabolic weight), and
	
	\item for any term $F_{i,j}\, \subset\, E_{p_i}$ of the quasiparabolic flag
	at $p_i$, if $\alpha_{i,j}$ and $\widetilde{\alpha}_{i,j}$ are the weights of $F_{i,j}$ in
	$E_*$ and $E'_*$, respectively, then
	\begin{equation}\label{eqn:choiceofrefinement}
	\big\vert\alpha_{i,j}-\widetilde{\alpha}_{i,j}\big\vert\,\leq\, \frac{1}{3\ell nr^2}\, .
	\end{equation}
\end{enumerate}

\begin{proposition}\label{prs}
The parabolic vector bundle $E'_*$ is stable if the
parabolic vector bundle $E_*$ is stable. Moreover, the parabolic vector bundle $E_*$ is semistable if the
parabolic vector bundle $E'_*$ is semistable.
\end{proposition}
\begin{proof}

Assume that $E_*$ is parabolic stable.
Take any subbundle $0\, \not=\, F\, \subsetneq \, E$. Let $F_*$ denote the parabolic structure on it
induced by $E_*$. Since $E_*$ is parabolic stable, we have
\begin{equation}\label{j4}
\text{par-deg}(F_*)r \, <\, \text{par-deg}(E_*)r'\,,
\end{equation}
where $r'= \text{rank}(F)$. From \eqref{j2} it follows that $\text{par-deg}(E_*)r'- \text{par-deg}(F_*)r$
is an integral multiple of $1/\ell$, and hence \eqref{j4} implies that
\begin{equation}\label{j1}
\text{par-deg}(E_*)r'- \text{par-deg}(F_*)r\, \geq\, \frac{1}{\ell}.
\end{equation}
Let $F'_*$ denote the parabolic vector bundle defined by $F$ equipped with the parabolic structure
induced by $E'_*$. From \eqref{eqn:choiceofrefinement} we have
$$
\text{par-deg}(F'_*) - \text{par-deg}(F_*)\,\leq \, \frac{nr'}{3\ell nr^2}\ \ \text{ and }\ \
\text{par-deg}(E_*) - \text{par-deg}(E'_*)\,\leq \, \frac{nr}{3\ell nr^2}\, .
$$
These imply that
$$
(\text{par-deg}(F'_*) - \text{par-deg}(F_*))r\,\leq \, \frac{1}{3\ell}\ \ \text{ and }\ \
(\text{par-deg}(E_*) - \text{par-deg}(E'_*))r'\,\leq \, \frac{1}{3\ell}\, .
$$
Adding these
$$
(\text{par-deg}(E_*)r'- \text{par-deg}(F_*)r) - (\text{par-deg}(E'_*)r'- \text{par-deg}(F'_*)r)\, \leq\, \frac{2}{3\ell},
$$
and hence using \eqref{j1},
$$
\text{par-deg}(E'_*)r'- \text{par-deg}(F'_*)r\, \geq\, \frac{1}{\ell}- \frac{2}{3\ell}\,=\, \frac{1}{3\ell}\, >\, 0.
$$
Therefore, $E'_*$ is parabolic stable.
Now assume that $E'_*$ is parabolic semistable. So we have
\begin{equation}\label{j5}
\text{par-deg}(F'_*)r \, \leq\, \text{par-deg}(E'_*)r'\,,
\end{equation}
From \eqref{eqn:choiceofrefinement} we have
$$
\text{par-deg}(F_*) - \text{par-deg}(F'_*)\,\leq \, \frac{nr'}{3\ell nr^2}\ \ \text{ and }\ \
\text{par-deg}(E'_*) - \text{par-deg}(E_*)\,\leq \, \frac{nr}{3\ell nr^2}\, .
$$
These imply that
$$
(\text{par-deg}(F_*) - \text{par-deg}(F'_*))r\,\leq \, \frac{1}{3\ell}\ \ \text{ and }\ \
(\text{par-deg}(E'_*) - \text{par-deg}(E_*))r'\,\leq \, \frac{1}{3\ell}\, .
$$
Adding these
$$
(\text{par-deg}(E'_*)r'- \text{par-deg}(F'_*)r) - (\text{par-deg}(E_*)r'- \text{par-deg}(F_*)r)\, \leq\, \frac{2}{3\ell},
$$
So using \eqref{j5},
$$
\text{par-deg}(E_*)r'- \text{par-deg}(F_*)r \, \geq\, - \frac{2}{3\ell} .
$$
But this implies that $\text{par-deg}(E_*)r'- \text{par-deg}(F_*)r \, \geq \, 0$ because
$\text{par-deg}(E_*)r'- \text{par-deg}(F_*)r$ is an integral multiple of
    $1/\ell$.
Hence $E_*$ is parabolic semistable.
\end{proof}

Let ${ \bm{\alpha}}$ be a set of weights defining the parabolic structure. We choose a 
refinement of ${\bm \alpha}$, denoted by ${\widetilde{\bm \alpha}}$, such that for each 
point $p_i$, the weight-tuple $\bm \alpha_i$ consists of distinct weights. The weights
${\widetilde{\bm \alpha}}$ are a choice of weights for full flags such that the 
corresponding weights for the given partial flags is ${\bm \alpha}$. By 
\eqref{eqn:choiceofrefinement}, we can always find ${\widetilde{\bm \alpha}}$ by choosing 
the missing weights small enough such that the natural forgetful map preserves stability 
with respect to ${\widetilde{\bm \alpha}}$ and ${\bm {\alpha}}$. In
particular by Proposition \ref{prs} , we get a 
natural regular map $F\,:\, M^{par,ss}_{\SL_r, \widetilde{\bm \alpha}}\,\to\, 
M^{par,ss}_{\SL_r, {\bm \alpha}}$ fitting in the following commutative diagram:
\begin{equation}\label{eqn:forget}
\begin{tikzcd}
M^{par,ss}_{\SL_r, \widetilde{\bm \alpha}} \arrow[r,"F"]\arrow[rd,"\widetilde{\pi}"'] & M^{par,ss}_{\SL_r, {\bm \alpha}}\arrow[d,"\pi"]\\
& S \ .
\end{tikzcd}
\end{equation}
Let $M_{\widetilde{\bm \alpha}}:=F^{-1}(M^{par,s}_{\SL_r, {\bm \alpha}})$.
Again, by Proposition \ref{prs}, $M_{\widetilde{\bm \alpha}}\subset
M^{par,s}_{\SL_r, \widetilde{\bm \alpha}}$. The map $F$
is fibration by
product of flag varieties. By Lemma \ref{lem:codimension},  the
codimension of the complement of $M_{\widetilde{\bm \alpha}}$ in
$M^{par,s}_{\SL_r, \widetilde{\bm \alpha}}$ is at least three. Hence, we
have the following isomorphisms  (via Hartogs' Theorem):
\begin{equation} \label{eqn:F-iso}
R^1\widetilde{\pi}_*\mathcal{T}_{M^{par,s}_{\SL_r,\widetilde{\bm
    \alpha}}/S}\cong R^1\widetilde{\pi}_*\mathcal{T}_{M^{}_{\widetilde{\bm
    \alpha}}/S} \quad ,\quad \widetilde{\pi}_*\operatorname{Sym}^2\mathcal{T}_{M^{par,s}_{\SL_r,\widetilde{\bm \alpha}}/S}\cong \widetilde{\pi}_*\operatorname{Sym}^2\mathcal{T}_{M^{}_{\widetilde{\bm \alpha}}/S}.
\end{equation}
The differential of $F$, along with the isomorphisms \eqref{eqn:F-iso}, induces natural maps
\begin{eqnarray*}
	&R^1\widetilde{\pi}_*\mathcal{T}_{M^{par,s}_{\SL_r,\widetilde{\bm \alpha}}/S}
	\,\xrightarrow{\ DF\ }\,
    R^1\widetilde{\pi}_*\bigl(DF^*(\mathcal{T}_{M^{par,s}_{\SL_r,{\bm
    \alpha}}/S})\bigr),\\
	&\widetilde{\pi}_*\operatorname{Sym}^2\mathcal{T}_{M^{par,s}_{\SL_r,\widetilde{\bm \alpha}}/S}
	\,\xrightarrow{\ \operatorname{Sym}^2DF\ }\,
    \widetilde{\pi}_*\operatorname{Sym}^2\bigl(DF^*(\mathcal{T}_{M^{par,s}_{\SL_r,{\bm
    \alpha}}/S})\bigr)\ .
\end{eqnarray*}
We have the following lemma:

\begin{lemma}\label{lem:infchange}
	The Leray spectral sequence gives natural isomorphisms:
	\begin{align*}
	&	R^1\widetilde{\pi}_*\bigl(DF^*(\mathcal{T}_{M^{par,s}_{\SL_r,{\bm
        \alpha}}/S})\bigr)
	\,\cong\, R^1{\pi}_*\bigl(\mathcal{T}_{M^{par,s}_{\SL_r,{\bm
        \alpha}}/S}\bigr),\\
	&	\widetilde{\pi}_*\operatorname{Sym}^2\bigl(DF^*(\mathcal{T}_{M^{par,s}_{\SL_r,
			{\bm \alpha}}/S})\bigr)\,\cong\, {\pi}_*\operatorname{Sym}^2
	\bigl(\mathcal{T}_{M^{par,s}_{\SL_r,{\bm \alpha}}/S}\bigr)\ .
	\end{align*}
	
\end{lemma}

\begin{proof}
	For the map $F$ in \eqref{eqn:forget},  space $M^{}_{\widetilde{\bm \alpha}}$ is a
	fiber bundle over the moduli space $M^{par,s}_{\SL_r,\bm\alpha}$, and moreover, the fibers are
	products of flag manifolds. Hence, we have
	\begin{equation}\label{ele1}
	F_*{\mathcal O}_{M^{}_{\widetilde{\bm \alpha}}}\,=\,{\mathcal O}_{M^{par,s}_{\SL_r, \bm \alpha}} \ \ \text{ and }\ \
	R^k F_*{\mathcal O}_{M^{}_{\widetilde{\bm \alpha}}}\,=\, 0
	\end{equation}
	for all $k\, \geq\, 1$. Given any vector bundle $W$ on $M^{par,s}_{\SL_r, \bm \alpha}$, using \eqref{ele1} and the projection
	formula we have
	\begin{equation}\label{ele2}
	F_*F^* W\,=\, W \ \ \text{ and }\ \ R^k F_*F^* W\,=\, 0
	\end{equation}
	for all $k\, \geq\, 1$. From \eqref{ele2} it follows that
	\begin{equation}\label{ele3}
	R^k\widetilde{\pi}_*F^* W\,=\, R^k{\pi}_* W\, .
	\end{equation}
	Now take $W\,=\, {\rm Sym}^2(\mathcal{T}_{M^{par,s}_{\SL_r,{\bm \alpha}}/S})$ in 
	\eqref{ele3}.
\end{proof}

As before, let $\mathbb{L}$ be an element of the rational Picard group
$\operatorname{Pic}(M^{par,s}_{\SL_r, \bm{\alpha}})\otimes \mathbb{Q}$.
Using the isomorphisms in Lemma \ref{lem:infchange} we have the following
diagram:
\begin{equation} \label{eqn:reductiondiagram}
    \hspace{-1.3cm}
      \begin{tikzcd}[column sep=3em]
&&& R^1{\pi}_*\left(\mathcal{T}_{M^{par,s}_{\SL_r,{\bm
          \alpha}}/S}\right)\arrow[from=ddd, bend
          right=80,"\cup \mathbb{L}"]\\
   & R^1\widetilde{\pi}_*\mathcal{T}_{M^{par,s}_{\SL_r,\widetilde{\bm
          \alpha}}/S}  \arrow[r,"\cong"] \arrow[from=d,"\cup \mathbb{L}"]
          &  R^1\widetilde{\pi}_*\mathcal{T}_{M^{}_{\widetilde{\bm
          \alpha}}/S}\arrow[r,"DF"]\arrow[ur]\arrow[from=d,"\cup
          \mathbb{L}"]&
          R^1\widetilde{\pi}_*\left(DF^*\left(\mathcal{T}_{M^{par,s}_{\SL_r,{\bm
          \alpha}}/S}\right)\right)\arrow[u, "\cong", swap]\\
&\widetilde{\pi}_*\operatorname{Sym}^2\mathcal{T}_{M^{par,s}_{\SL_r,\widetilde{\bm
          \alpha}}/S}\arrow[r,"\cong"]&\widetilde{\pi}_*\operatorname{Sym}^2\mathcal{T}_{M^{}_{\widetilde{\bm
          \alpha}}/S}\
          \arrow[r,"\,\,{\operatorname{Sym}^2DF}\,\,",]\arrow[dr]&\
          \widetilde{\pi}_*\operatorname{Sym}^2\left(DF^*\left(\mathcal{T}_{M^{par,s}_{\SL_r,{\bm
          \alpha}}/S}\right)\right)\arrow[d,"\cong"]\arrow[u,"\cup
          \mathbb{L}",swap]\\
&&&{\pi}_*\operatorname{Sym}^2\left(\mathcal{T}_{M^{par,s}_{\SL_r,{\bm \alpha}}/S}\right).
\end{tikzcd}
\end{equation}
We note that we have used the same notation $\mathbb{L}$ for a line bundle on both $M_{\SL_r,\widetilde{\bm \alpha}}^{par,s}$ and also on $M_{\SL_r,{\bm \alpha}}^{par,s}$.
The isomorphisms in Lemma \ref{lem:infchange}, composed with the differential maps, give natural
maps
\begin{equation}\label{eqn:natural1}
R^1\widetilde{\pi}_*\mathcal{T}_{M^{par,s}_{\widetilde{\bm \alpha},\SL_r}/S}\,\lra\,
R^1{\pi}_*\bigl(\mathcal{T}_{M^{par,s}_{{\bm \alpha},\SL_r}/S}\bigr)
\ ;
\end{equation}
\begin{equation}\label{eqn:natural2}
\widetilde{\pi}_*\operatorname{Sym}^2\mathcal{T}_{M^{par,s}_{\SL_r,\widetilde{\bm \alpha}}/S}
\,\lra\,
    {\pi}_*\operatorname{Sym}^2\bigl(\mathcal{T}_{M^{par,s}_{\SL_r,{\bm
    \alpha}}/S}\bigr)\ .
\end{equation}
With the above notation we have the following proposition:

\begin{proposition}
	The maps in \eqref{eqn:natural1} and \eqref{eqn:natural2} are isomorphisms, and
	the diagram in \eqref{eqn:reductiondiagram} is commutative.
\end{proposition}

\begin{proof} Consider the differential 
	$
	DF \, :\, \mathcal{T}_{M^{par,s}_{\SL_r, \widetilde {\bm\alpha}/S}}\, \longrightarrow\, F^*\mathcal{T}_{M^{par,s}_{\SL_r,\bm \alpha}/S}
	$,
	and its second symmetric product
	$$
	{\rm Sym}^2(DF) \, :\, {\rm Sym}^2\mathcal{T}_{M^{par,s}_{\SL_r, \widetilde {\bm\alpha}}/S}\,
	\longrightarrow\,{\rm Sym}^2(F^*\mathcal{T}_{M^{par,s}_{\SL_r,\bm \alpha}/S})\,=\,
	F^*{\rm Sym}^2(\mathcal{T}_{M^{par,s}_{\SL_r,\bm \alpha}/S})\, .
	$$
	Let
	$
	\beta\, :=\, (DF)^* \, :\, F^*\mathcal{T}^{\vee}_{M^{par,s}_{\SL_r, {\bm
				\alpha}}/S}\, \to\, \mathcal{T}^{\vee}_{M^{par,s}_{\SL_r\widetilde {\bm\alpha}}/S}
	$
	be the dual of the above homomorphism $DF$. 
	Note that ${\rm Sym}^2(\mathcal{T}_{M^{par,s}_{\SL_r, \widetilde{\bm\alpha}}/S})$
	(respectively, ${\rm Sym}^2(F^*\mathcal{T}_{M^{par,s}_{\SL_r, {\bm\alpha}}/S})$ defines fiberwise quadratic functions 
	$\mathcal{T}^{\vee}_{M^{par,s}_{\SL_r,\widetilde{\bm \alpha}}/S}$
    (respectively, 
	$F^*\mathcal{T}^{\vee}_{M^{par,s}_{\SL_r, \bm \alpha}/S}$. Take any $z\, \in\,
	M^{par,s}_{\SL_r, \widetilde{\bm \alpha}}$.
	For any $w \in {\rm Sym}^2(\mathcal{T}_{M^{par,s}_{SL_r, \widetilde{\bm\alpha}}/S})_z$ and
	$\nu\, \in\, (F^*\mathcal{T}^{\vee}_{M^{par,s}_{\SL_r, \bm
    \alpha}/S})_z$, we have:
	$({\rm Sym}^2(DF))_z(w)(\nu)= w((DF)^*_z(\nu))$.
    From this we have the following commutative diagram of homomorphisms
    (recall \eqref{ud}):
	\begin{equation}\label{ele5}
	\begin{tikzcd}[column sep=huge]
	\widetilde{\pi}_*{\rm Sym}^2(\mathcal{T}_{M^{par,s}_{\SL_r, \widetilde{\bm \alpha}}/S}) \arrow[r," \widetilde{\pi}_*{\rm Sym}^2(DF)" ]\arrow[d, "\cong"]&
	\widetilde{\pi}_* F^*\mathcal{T}_{M^{par,s}_{\SL_r,\bm \alpha}/S}\arrow[d,"\cong"]\\
	R^1\pi_{s*} \mathcal{T}_{\mathcal{C}/S}(-D) \arrow[r,"\operatorname{Id}"] &R^1\pi_{s*}\mathcal{T}_{\mathcal{C}/S}(-D) 
	\end{tikzcd}
	\end{equation}
	in which $\widetilde{\pi}_*{\rm Sym}^2(DF)$ is an isomorphism, because all other homomorphisms in
	\eqref{ele5} are isomorphisms. This proves that the map in \eqref{eqn:natural2} is an isomorphism.
	The proof that the map in \eqref{eqn:natural1} is an isomorphism is very similar to the proof of it for \eqref{eqn:natural2}. Now it is evident that the diagram in \eqref{eqn:reductiondiagram} is commutative.
\end{proof}
Thus we have proved the following proposition: 
\begin{proposition}\label{prop:sum1}
	Consider two elements $\mathbb{L}_1$ and 
	$\mathbb{L}_2$ in $\operatorname{Pic}(M^{par,s}_{\SL_r, \bm \alpha})\otimes \mathbb{Q}$. If the maps 
	$\cup [\mathbb{L}_1]$ and $\cup[\mathbb{L}_2]$ agree on $\widetilde{\pi}_*\operatorname{Sym}^2 \mathcal{T}_{ M^{par,s}_{\SL_r,\widetilde{\bm{\alpha}}/S}}$, 
	then they also agree on ${\pi}_*\operatorname{Sym}^2 \mathcal{T}_{ 
		M^{par,rs}_{\SL_r,\bm{\alpha}}/S}$.
\end{proposition}
\subsection{Reduction to abelian varieties}

This step is essentially the same as in \cite[Prop.\ 5.2]{Hitchin:90} generalized to 
the parabolic set-up with the additional information about spectral data with one node. For completeness, we include the details by following the exposition 
in \cite{BBMP20}.

\subsubsection{Hitchin Map}

Let $\pi_{s}\,:\, \mathcal{C}\,\to\, S$ be a family of $n$-pointed curves, and let $D$ be the divisor of marked points. Consider the vector bundle 
$\mathcal{B}\,:=\,\bigoplus_{i=2}^{r}\pi_{s*}
K_{\mathcal{C}/S}^{i}((i-1)D)$, and let $\pi_{\mathcal{B}}\,:\,
\mathcal{B}\,\to\
\, S$ be the natural projection map.
Let $\pi_{\mathcal{H}}\,:\, \mathcal{H}^{par,ss}_{\widetilde{\bm
		\alpha},\SL_r}\,\to\, S$ be the relative strongly parabolic Higgs moduli
space parametrizing pairs $(\mathcal{P}, \theta)$, where $\mathcal{P}$ is a
parabolic bundle and $\theta$ is a strongly parabolic
endomorphism of $\mathcal{P}$ twisted by $K(D)$. We refer the reader to \cite{BiswasRamanujan} for notions of stability and semistability for strongly parabolic Higgs bundles. 
Recall the Hitchin morphism $\operatorname{Hit}\,: \,\mathcal{H}^{par,ss}_{\widetilde{\bm \alpha}, \SL_r}
\,\to\, \mathcal{B}$ from Section \ref{sec:canonical}.
We have the  following commutative diagram 
\begin{equation}\label{zl3}
\begin{tikzcd}[column sep=huge]
\mathcal{H}^{par,ss}_{\widetilde{\bm \alpha},\SL_r} \arrow[r,"\operatorname{Hit}"]\arrow[dr, "\pi_{\mathcal{H}}"'] & \mathcal{B} \ar[d, "\pi_{\mathcal{B}}"]\\
&S\ .
\end{tikzcd}
\end{equation}
Let $\mathcal{B}^{0}$ denote the collection of points in $\mathcal{B}$
such that the corresponding spectral
curve (as described in \cite[Sec.\ 3]{BNR}) is smooth.  The complement of
$\mathcal{B}^0$ in $\mathcal{B}$ is a divisor,
since we are in the case of $\SL_r$-Higgs bundles with full flags. 
This follows from the fact (\cite[Lemma 3.1]{gomezlogares} and \cite[Remark 3.5]{BNR}) that $K^rD^{r-1}$ is 
very ample and has sections without multiple zeros in either of the following cases: 
$g\geq 2$; $g=1$ and degree of $D \geq \frac{3}{r-1}$; $g=0$ and degree of $D\geq 2+\frac{3}{r-1}$.
But this is implied by the assumption that the orbifold genus $g(\Cscr)\geq
2$ (see Definition \ref{def:orbigenus} and also Appendix \ref{sec:appC}).
Then via abelianization, it is well-known that the fibers of 
$\operatorname{Hit}^{-1}(\vec{b})$, $\vec{b}\in \mathcal{B}^0$, are families of  abelian varieties $A_{\vec{b}}$ over $S$.  

Consider the divisor $\mathcal{D}:=\mathcal{B} \backslash \mathcal{B}^{\circ}\subsetneq \mathcal{B}$. 
As in \cite[Prop.\ 4.1]{alfayagomez}, for $x\in D$ let $\mathcal{D}_x$ to
be the set of characteristic polynomials whose spectral curves are  singular
over $x$, and let $\mathcal{D}_U$ to be the set of characteristic
polynomials whose spectral curves are smooth over each $x\in D$,  but singular over some $y\notin D$. 
Then $\mathcal{D}=\overline{\mathcal{D}}_U\cup \bigcup_{x\in D}\mathcal{D}_x$. 

Now (\cite[p.\ 28]{alfayagomez})
$\mathcal{D}_x=\bigoplus_{i=2}^{r-1}H^0(K^iD^{i-1})\oplus
H^0(K^rD^{r-1}(-x))$, and hence is irreducible.
By the assumption, $K^rD^{r-1}$ is very ample, which implies that $\dim \mathcal{D}_x < \dim \mathcal{D}$. 
Similarly, the remaining part of the proof of \cite[Prop.\
4.1]{alfayagomez} also goes through under this assumption. We obtain that
$\overline{\mathcal{D}}_U$ is the surjective image of an affine bundle over $C\backslash D$ whose fiber at $y$ is given by 
$$\oplus_{i=1}^{r-2}H^0(K^iD^{i-1})\oplus H^0(K^{r-1}D^{r-2}(-y))\oplus
H^0(K^rD^{r-1}(-2y))\ .$$ 
Hence, $\overline{\mathcal{D}}_U$ is also irreducible.

Thus $\overline{\mathcal{D}}_U$ is the unique irreducible component of
highest dimension in $\mathcal{D}$. Now by Bertini's theorem, a generic
point of $\mathcal{D}_U$ has an irreducible spectral curve with exactly one node over a point $y\notin D$. 

Now we let $\mathcal{B}^{\heartsuit}$ denote the subspace of $\mathcal{B}$ consisting of all points such that the spectral curve is
irreducible and has at most one node outside the divisor $D$. By the previous discussion, we get that the codimension of the complement of
$\mathcal{B}^{\heartsuit}$ in $\mathcal{B}$ is  at least two. The following lemma determines the fibers of the Hitchin map over
points of $\mathcal{B}^{\heartsuit}$. 

\begin{proposition}\label{prop:quasiabelian}
The fiber of the Hitchin map $\mathcal{H}_{\widetilde{\bm \alpha},\SL_r}^{par,ss}\rightarrow \mathcal{B}$
over any point $\vec{b} \in \mathcal{B}^{\heartsuit}$ is a quasi-abelian variety. 
\end{proposition}

\begin{proof}
Fix a $n$-pointed Riemann surface $(X, D)$. Let $\mathcal{B}$ be the base of the strongly parabolic Hitchin
map. For any $\vec{b} \in \mathcal{B}$, let $C_{\vec{b}} \subset K_X(D)$ be the corresponding spectral curve; let
	$p_{\vec{b}} : C_{\vec{b}} \to X$ be the natural projection. By assumptions $\vec{b}$ is such that $C_{\vec{b}}$ is a nodal curve with
	a single node $z$ which is not contained in $p_{\vec{b}}^{-1}(D)$.
    Moreover since the curve $C_{b}$ is integral, we get that the
    pushforward of a torsion free sheaf to $X$ is locally free. 
	
	 Consider the compactified Jacobian
     $\overline{J}^{\delta}(C_{\vec{b}})$ consisting of rank one-torsion
     free sheaves $L$ such that degree of $p_{\vec{b},*}L$ is zero. Since
     the node is not a marked point, we get a natural filtration of sheaves with quotients supported on the divisor $D$. 
	 \begin{equation}\label{eqn:inducedflag0}
	 {p}_{\vec{b}, *}(L\otimes \mathcal{O}_{{C}_{\vec{b}}}(-(r-1)R))\,\subset\, \cdots
	 \,\subset\,{p}_{\vec{b},*}(L\otimes \mathcal{O}_{{C}_{\vec{b}}}(-(r-i)R))\,
	 \subset\, \cdots\,\subset \,{p}_{\vec{b},*}L,
	 \end{equation} where $R$ is the ramification divisor. 
	As in \cite{BNR}, pushing forward a  section $\phi$  of
    $p_{\vec{b}}^*(K_XD)$ induces a map $\phi:  p_{\vec{b},* }L \rightarrow
    p_{\vec{b},* }L \otimes K_X(D)$. Now  since the node and the marked points are disjoint, the section $\phi$ gives the required  Higgs field as in the case of smooth spectral curves \cite{gomezlogares}.  This gives the spectral correspondence in the case of degree zero Higgs bundles.
	Consider the closed variety of $\overline{J}^\delta(C_{\vec{b}})$ defined as follows: $$\overline{\operatorname{Prym}}(C_{\vec{b}},C)=\{ M\in \overline{J}(C_{\vec{b}}) \ |\ p_{\vec{b},*}M=\mathcal{O}_X\}.$$ Clearly the variety $\overline{\operatorname{Prym}}(C_{\vec{b}},C)$ gives the Hitchin fiber at ${\vec{b}}\in \mathcal{B}^{\heartsuit}\backslash \mathcal{B}^0$
    (cf.\ \cite[Thm.\ 6.1]{GothenOliveira:13}). To complete the proof we need to show that $\overline{\operatorname{Prym}}(C_{\vec{b}},C)$ is semi-abelian. 
	
	 Let $n : Y\to  C_{\vec{b}}$ be the
	normalization and $f = p_{\vec{b}}\circ n$ the projection of $Y$ to $X$. The points of $Y$
over $z$ are $a$ and $b$, respectively. Let $P \subset J^\delta(Y)$ be the Prym for $f$. Let  $L \to Y \times P$ be a Poincar\'e line
bundle which is just the restriction of a Poincar\'e bundle on $Y\times J^{\delta}(Y)$. For any point $y$ 
	of $Y$, the line bundle in $P$ (resp.\ also on $J^{\delta}(Y)$)
    obtained by restricting $L$ to ${y} \times P$ (resp.\ also on ${y}\times J^{\delta}(Y)$) will be denoted by
 	$L_y$. Consider the line bundle $A := L_b^*\otimes L_a$ on $P$ (resp.\ $J^{\delta}(Y)$); it is independent of the 
	choice of the Poincar\'e bundle $L$. Now consider the projective bundle $\mathbb{P}(A\oplus \mathcal{O}_P)
\rightarrow P$ (also on $J^{\delta}(Y)$ ) and identify the
	two sections of it given by $A$ and $\mathcal{O}_A$. The resulting
    varieties $B_P \subset B_{J^{\delta}}$ are semi-abelian. 
    By  \cite[Thm.\ 4]{bhosle}, $B_{J^{\delta}}$ is identified with
    $\overline{J}^{\delta}(C_{\vec{b}})$. Moreover, by the choice of $\delta$,
    we get $B_{P}\subseteq \overline{\operatorname{Prym}}(C_{\vec{b}},C)$.
    The equality follows from the fact that the dimensions of both $B_{P}$ and
    $\overline{\operatorname{Prym}}(C_{\vec{b}},C)$ are the same. This completes the proof. 
\end{proof}
\subsubsection{Vector fields tangent to fibers of Hit}
 We get natural functions on $\mathcal{H}_{\widetilde{\bm \alpha},\SL_r}^{par,ss}$ obtained by pulling back sections of $\mathcal{B}^*$ to 
$\mathcal{H}_{\widetilde{\bm \alpha},\SL_r}^{par,ss}$ via the Hitchin map $\operatorname{Hit}$ in \eqref{zl3}. Since
$\mathcal{T}^{\vee}_{M_{\SL_r,\widetilde{\bm \alpha}}^{par,s}} \, \subset\, 
\mathcal{H}_{\widetilde{\bm \alpha},\SL_r}^{par,s}$, and the natural Liouville symplectic form on
$\mathcal{T}^{\vee}_{M_{\SL_r,\widetilde{\bm \alpha}}^{par,s}}$ extends to $\mathcal{H}_{\widetilde{\bm 
		\alpha},\SL_r}^{par,s}$, we get Hamiltonian vector fields on $\mathcal{H}_{\widetilde{\bm \alpha},\SL_r}^{par,s}$ tangent to the fibers of the 
parabolic Hitchin map. As the codimension of the complement of $\mathcal{T}^{\vee}_{M_{\SL_r,\widetilde{\bm \alpha}}^{par,s}}$ in 
$\mathcal{H}_{\widetilde{\bm \alpha},\SL_r}^{par,ss}$ is at least two,
we conclude that any class $\mathbb{L}$ in the rational Picard group of 
$M^{par,ss}_{\SL_r,\widetilde{\bm \alpha}}$ extends to entire $\mathcal{H}_{\widetilde{\bm \alpha},\SL_r}^{par,s}$. Now the cup product 
with the relative Atiyah class of $\mathbb{L}$ gives a natural map
\begin{equation}\label{eqn:crucialHitchin0}
\pi_{\mathcal{H}_*} \mathcal{T}_{\mathcal{H}_{\widetilde{\bm \alpha},\SL_r}^{par,s}/S}\,\lra\,
R^1{\pi_{\mathcal{H}}}_*\mathcal{O}_{\mathcal{H}_{\widetilde{\bm
    \alpha},\SL_r}^{par,s}}\ .
\end{equation}
Since the map $\pi_{\mathcal{B}}$ in \eqref{zl3} is affine, it follows that
$R^1{\pi_{\mathcal{H}}}_*{\mathcal{O}_{\mathcal{H}_{{\widetilde{\bm\alpha}},\SL_r}^{par,s}}}$
is isomorphic to the locally free sheaf ${\pi_{\mathcal{B}}}_*\bigl( R^1
\operatorname{Hit}_*\mathcal{O}_{\mathcal{H}_{\widetilde{\bm
\alpha},\SL_r}^{par,ss}}\bigr)$.  We also have the inclusion 
$\pi_{\operatorname{Hit}_*}\mathcal{T}_{\mathcal{H}_{\widetilde{\bm
\alpha},\SL_r}^{par,s}/\mathcal{B}} \hookrightarrow
\pi_{\operatorname{Hit}_*}\mathcal{T}_{\mathcal{H}_{\widetilde{\bm
\alpha},\SL_r}^{par,s}/S}$.
Now consider the map obtained by restricting \eqref{eqn:crucialHitchin0},  which on pushing forward gives 
\begin{equation}
\label{eqn:crucialHitchin1}
\begin{tikzcd}
\pi_{\mathcal{B}_*}\bigl(\pi_{\operatorname{Hit}_*}
    \mathcal{T}_{\mathcal{H}_{\widetilde{\bm
    \alpha},\SL_r}^{par,s}/\mathcal{B}}\bigr)\arrow[rr,"f_{\mathbb{L}}"', bend right=10]\arrow[r, hook]&\pi_{\mathcal{H}_*}\mathcal{T}_{\mathcal{H}_{\widetilde{\bm \alpha},\SL_r}^{par,s}/\mathcal{B}} \arrow[r]&{\pi_{\mathcal{B}}}_*
    \bigl( R^1 \operatorname{Hit}_*\mathcal{O}_{\mathcal{H}_{\widetilde{\bm
    \alpha},\SL_r}^{par,ss}}\bigr)
    \ .
\end{tikzcd}
\end{equation}
We have the following proposition:

\begin{proposition}\label{eqn:triviality}
	The coherent sheaves $\pi_{\operatorname{Hit}_*} \mathcal{T}_{\mathcal{H}_{\widetilde{\bm \alpha},\SL_r}^{par,s}/\mathcal{B}}$
and $R^1 \operatorname{Hit}_*\mathcal{O}_{\mathcal{H}_{\widetilde{\bm \alpha},\SL_r}^{par,ss}}$ are both trivial and
isomorphic of same rank, where the fibers are just the vector spaces $H^0(A_{\vec{b}}, \,\mathcal{T}_{A_{\vec{b}}})$ and
$H^1(A_{\vec{b}},\, \mathcal{O}_{A_{\vec{b}}})$,  respectively, for any for $\vec{b}\in \mathcal{B}^0$; the isomorphism is given
by cup product by a K\"ahler class on $A_{\vec{b}}$.
\end{proposition}

\begin{proof}

Cupping with the first Chern class of the pull back of the ample line bundle $\mathcal{L}_{\widetilde{\bm \alpha}}$
from $M^{par,ss}_{\SL_r,\widetilde{\bm \alpha}}$ induces a map between coherent sheaves $\pi_{\operatorname{Hit}_*} 
\mathcal{T}_{\mathcal{H}_{\widetilde{\bm \alpha},\SL_r}^{par,s}/\mathcal{B}}$ and $R^1 
\operatorname{Hit}_*\mathcal{O}_{\mathcal{H}_{\widetilde{\bm \alpha},\SL_r}^{par,ss}}$. Over $\mathcal{B}^0$, the 
fibers of the coherent sheaf $R^1\operatorname{Hit}_*\mathcal{O}_{\mathcal{H}_{\widetilde{\bm 
\alpha},\SL_r}^{par,ss}}$ have constant dimension which equals $\dim A_{\vec{b}}$. Similarly over $\mathcal{B}^0$, 
because the fibers of the map $\pi_{\operatorname{Hit}}$ are abelian varieties and the sheaf $\pi_{\operatorname{Hit}_*} 
\mathcal{T}_{\mathcal{H}_{\widetilde{\bm \alpha},\SL_r}^{par,s}/\mathcal{B}}$ is locally free and trivial. Moreover, 
there is an isomorphism between $\pi_{\operatorname{Hit}_*} \mathcal{T}_{\mathcal{H}_{\widetilde{\bm 
\alpha},\SL_r}^{par,s}/\mathcal{B}}$ and $R^1\operatorname{Hit}_*\mathcal{O}_{\mathcal{H}_{\widetilde{\bm 
\alpha},\SL_r}^{par,ss}}$ induced by the natural isomorphism between $H^0(A_{\vec{b}}, \,\mathcal{T}_{A_{\vec{b}}})$ 
and $H^1(A_{\vec{b}},\, \mathcal{O}_{A_{\vec{b}}})$ given by a K\"ahler class.
	
Now for $\vec{b} \in \mathcal{B}^{\heartsuit}\backslash \mathcal{B}^0$, by Proposition \ref{prop:quasiabelian}, we know that the fibers 
are quasi-abelian varieties $\overline{A}_{\vec{b}}$ and in particular $\dim 
H^1(\overline{A}_{\vec{b}},\mathcal{O}_{\overline{A}_{\vec{b}}})=\dim \overline{A}_{\vec{b}}$. Since the codimension of the complement of 
$\mathcal{B}^{\heartsuit}$ in $\mathcal{B}$ is at least two and the Hitchin map is flat \cite[Corollary 11]{BaragliaKamgarpourVarma:19}, \cite[Theorem 1.17]{BaragKam}, it follows that 
$R^1\operatorname{Hit}_*\mathcal{O}_{\mathcal{H}_{\widetilde{\bm \alpha},\SL_r}^{par,ss}}$ is locally free on $\mathcal{B}$. As in the 
case of Abelian varieties, the cup product by a K\"ahler form induces an isomorphism of 
$\operatorname{Ext}^0(\overline{A}_{\vec{b}},\mathcal{O}_{\overline{A}_{\vec{b}}})$ with 
$H^1(\overline{A}_{\vec{b}},\mathcal{O}_{\overline{A}_{\vec{b}}})$. This shows that the coherent sheaf $\pi_{\operatorname{Hit}_*} 
\mathcal{T}_{\mathcal{H}_{\widetilde{\bm \alpha},\SL_r}^{par,s}/\mathcal{B}}$ is trivial over $\mathcal{B}^{\heartsuit}$ with fibers given 
by functions on $\mathcal{B}$. Moreover cupping with the first Chern class of $\mathcal{L}_{\widetilde{\bm \alpha}}$ induces an isomorphism of 
$\pi_{\operatorname{Hit}_*} \mathcal{T}_{\mathcal{H}_{\widetilde{\bm \alpha},\SL_r}^{par,s}/\mathcal{B}}$ with $R^1 
\operatorname{Hit}_*\mathcal{O}_{\mathcal{H}_{\widetilde{\bm \alpha},\SL_r}^{par,ss}}$. Thus the proposition follows from Hartogs' theorem and 
the fact that codimension of the complement of $\mathcal{B}^{\heartsuit}$ is at least two.
\end{proof}

The following result is a direct consequence of Proposition  \ref{eqn:triviality}.
\begin{corollary}\label{cor:triv}
	Let $\mathbb{L}_1$ and $\mathbb{L}_2$ be  two rational line bundles on $M_{\widetilde{\bm \alpha}, \SL_r}^{par,ss}$, and let
$\vec{b}$ be a generic point of
	the Hitchin base. Then $f_{\mathbb{L}_1}\,=\,f_{\mathbb{L}_2}$
(see \eqref{eqn:crucialHitchin1}) if and only if the two homomorphisms
	$H^0(A_{\vec{b}}, \,\mathcal{T}_{A_{\vec{b}}})\,\to\, H^1(A_{\vec{b}},\, \mathcal{O}_{A_{\vec{b}}})$ induced by
	cupping with the first Chern class of the restrictions of $\mathbb{L}_1$ and $\mathbb{L}_2$ are the same.
\end{corollary}

Now the composition of $f_{\mathbb{L}}$  with the natural Hamiltonian vector fields
produces a homomorphism
\begin{equation}\label{eqn:crucialHitchin}
h_{\mathbb{L}}\,:\,{\pi_{\mathcal{B}}}_*\mathcal{O}_{\mathcal{B}}\otimes \mathcal{B}^{*}
\,\longrightarrow\,
R^1{\pi_{\mathcal{H}}}_*\mathcal{O}_{\mathcal{H}_{\widetilde{\bm
    \alpha},\SL_r}^{par,s}}\ .
\end{equation}
Observe that this map $h_{\mathbb{L}}$ is equivariant with respect to the natural
$\mathbb{C}^*$ action on ${\pi_{B}}_*\mathcal{O}_{\mathcal{B}}\otimes \mathcal{B}^*$ and the
natural action of $\mathbb{C}^*$ on $R^1{\pi_{\mathcal{H}}}_*\mathcal{O}_{\mathcal{H}^{par,s}_{\widetilde{\bm \alpha},\SL_r}}$
is of weight $-1$. 
Since $H^0(A_{\vec{b}},\, \mathcal{T}_{A_{\vec{b}}})$ is given by vector fields coming from $\mathcal{B}^*$, we have
the following lemma:

\begin{lemma}\label{lem:obvious}
The two homomorphisms ${h}_{\mathbb{L}_1}$ and ${h}_{\mathbb{L}_2}$ $($see
    \eqref{eqn:crucialHitchin}$)$
coincide if and only if $f_{\mathbb{L}_1}\,=\,f_{\mathbb{L}_2}$ $($see
    \eqref{eqn:crucialHitchin1}$)$. 
\end{lemma}
\noindent
Finally, we would like to relate the map
$\cup[\mathbb{L}]: \pi_*\operatorname{Sym}^2\bigl(\mathcal{T}_{M^{par,s}_{\SL_r,
		\widetilde{\bm \alpha}}/S}\bigr)\to
R^1{\pi}_*\bigl(\mathcal{T}_{M^{par,s}_{\SL_r,\widetilde{\bm
\alpha}}/S}\bigr)$
with the map $h_{\mathbb{L}}$ in \eqref{eqn:crucialHitchin}.
Observe that
$\pi_*\operatorname{Sym}^2\left(\mathcal{T}_{M^{par,s}_{\SL_r,\widetilde{\bm
			\alpha}}/S}\right)$ injects into
$\pi_{\mathcal{H}_*}\mathcal{O}_{\mathcal{H}_{\widetilde{\bm
			\alpha},\SL_r}^{par,s}}$ as the degree two part. Since the Hitchin map
is proper (Lemma \ref{lem:proper}), and its fibers are connected, functions on the Higgs moduli
spaces are all pull-backs of functions on the Hitchin base. As described
earlier, these functions give Hamiltonian vector fields and hence we have a map 
\begin{equation}
\begin{tikzcd}
\pi_*\operatorname{Sym}^2\mathcal{T}_{M^{par,s}_{\SL_r,\widetilde{\bm
    \alpha}}/S}\arrow[r,hook]&\pi_{\mathcal{H}_*}\mathcal{O}_{\mathcal{H}_{\widetilde{\bm
    \alpha},\SL_r}^{par,s}}\arrow[r]&
    {\pi_{\mathcal{H}}}_*\mathcal{T}_{\mathcal{H}_{\widetilde{\bm
    \alpha},\SL_r}^{par,s}}\ .
\end{tikzcd}
\end{equation}
Cupping with any section $\gamma$ of
$R^1{\pi_{\mathcal{H}}}_*{\Omega}_{\mathcal{H}_{\widetilde{\bm \alpha},\SL_r}^{par,s}}$
produces a map  
\begin{equation}\label{eqn:obvious33}
\begin{tikzcd}
\pi_*\operatorname{Sym}^2\mathcal{T}_{M^{par,s}_{\widetilde{\bm
    \alpha},\SL_r}/S}\arrow[r,hook]&\pi_{\mathcal{H}_*}\mathcal{O}_{\mathcal{H}_{\widetilde{\bm
    \alpha},\SL_r}^{par,s}}\arrow[r]&
    {\pi_{\mathcal{H}}}_*\mathcal{T}_{\mathcal{H}_{\widetilde{\bm
    \alpha},\SL_r}^{par,s}/S}\arrow[r,"\cup
    \gamma"]&R^1{\pi_{\mathcal{H}}}_*\mathcal{O}_{\mathcal{H}_{\widetilde{\bm
    \alpha},\SL_r}^{par,s}}\ .
\end{tikzcd}
\end{equation}
Consider the inclusion of $R^1\pi_*\mathcal{T}_{M^{par,s}_{\widetilde{\bm \alpha},\SL_r}/S}$ into $R^1{\pi_{\mathcal{H}}}_*\mathcal{O}_{\mathcal{H}_{\widetilde{\bm \alpha},\SL_r}^{par,s}}$.
On the other hand, we have the following exact sequence 
$$0\,\longrightarrow\, \mathcal{T}_{M^{par,s}_{\widetilde{\bm \alpha},\SL_r}/S}\,\longrightarrow
\, \mathcal{O}_{\mathcal{H}_{\widetilde{\bm \alpha},\SL_r}^{par,s}}/
	\mathcal{I}_{M^{par,s}_{\SL_r,\widetilde{\alpha}}}^2\,\longrightarrow\, \mathcal{O}_{{M}^{par,s}_{\SL_r,\widetilde{\alpha}}}
\,\longrightarrow\, 0\ ,$$
where $\mathcal{I}_{{M}^{par,s}_{\SL_r,\widetilde{\bm \alpha}}}$ is the
ideal sheaf of ${M}^{par,s}_{\SL_r,\widetilde{\bm \alpha}}$ in the moduli
of parabolic Higgs bundles. Since there are no global tangent vector field
on ${M}^{par,s}_{\SL_r,\widetilde{\bm \alpha}}$, it follows from the long exact sequence of cohomology that 
$R^1\pi_*\bigl(\mathcal{T}_{M^{par,s}_{\SL_r,\widetilde{\bm \alpha}}/S}\bigr)
\,\cong\, R^1\pi_*\bigl(\mathcal{O}_{\mathcal{H}_{\widetilde{\bm
\alpha},\SL_r}^{par,s}}/\mathcal{I}_{M^{par,s}_{\SL_r,\widetilde{\bm\alpha}}}^2\bigr)$.
Now the restriction induces  another map
\begin{equation}\label{eqn:obvious34}
\begin{tikzcd}
R^1{\pi_{\mathcal{H}}}_*\mathcal{O}_{\mathcal{H}_{\widetilde{\bm \alpha},\SL_r}^{par,s}}\arrow[r]&
    R^1\pi_*\bigl( \mathcal{O}_{\mathcal{H}_{\widetilde{\bm \alpha},\SL_r}^{par,s}/\mathcal{I}_{{M}^{par,s}_{\SL_r,\widetilde{\bm
    \alpha}}}^2}\bigr)
\end{tikzcd}
\end{equation}
which restricts to the identity map on
$R^1\pi_*\bigl(\mathcal{T}_{M^{par,s}_{\SL_r,\widetilde{\bm
\alpha}}/S}\bigr)$. Hence, combining eqns. \eqref{eqn:obvious33} and \eqref{eqn:obvious34}, we have the following diagram
\begin{equation}
\adjustbox{scale=0.8}{
\begin{tikzcd}[row sep=huge, column sep=huge]
&\pi_*\operatorname{Sym}^2\mathcal{T}_{M^{par,s}_{\SL_r,\widetilde{\bm \alpha}}/S}\arrow[ld, hook]\arrow[rr,"\cup \gamma"]&&R^1\pi_*\left(\mathcal{T}_{M^{par,s}_{\SL_r,\widetilde{\bm \alpha}}/S}\right)\arrow[d,bend left=30]\\
\pi_{\mathcal{H}_*}\mathcal{O}_{\mathcal{H}_{\widetilde{\bm \alpha},\SL_r}^{par,s}}\arrow[r]&{\pi_{\mathcal{H}}}_*\mathcal{T}_{\mathcal{H}_{\widetilde{\bm \alpha},\SL_r}^{par,s}/S}\ar[r,hook]& {\pi_{\mathcal{H}}}_*\mathcal{T}_{\mathcal{H}_{\widetilde{\bm \alpha},\SL_r}^{par,s}/S}\arrow[r,"\cup \gamma"]&R^1{\pi_{\mathcal{H}}}_*\mathcal{O}_{\mathcal{H}_{\widetilde{\bm \alpha},\SL_r}^{par,s}}\arrow[u]
\end{tikzcd}}
\end{equation}
The same proof as in Hitchin \cite[p.\ 379]{Hitchin:90} (see also \cite[Prop.\ C.2.4]{BBMP20}) shows
that the above diagram commutes up to a scalar, and, by construction, the horizontal
map at the bottom is the map $h_{\gamma}$ (cf.\ \eqref{eqn:crucialHitchin}). Thus we proved the following.
\begin{proposition}\label{prop:sum2}
	Consider two elements $\mathbb{L}_1$ and 
		$\mathbb{L}_2$ in $\operatorname{Pic}(M^{par,s}_{\SL_r,\widetilde{\bm \alpha}})\otimes \mathbb{Q}$ and let $A_{\vec{b}}$ be as in Corollary \ref{cor:triv}. If the maps 
		between $H^0(A_{\vec{b}},\mathcal{T}_{A_{\vec{b}}})\rightarrow H^1(A_{\vec{b}}, \mathcal{O}_{A_{\vec{b}}})$ induced by cupping with the first Chern classes of restrictions of $\mathbb{L}_1$ and $\mathbb{L}_2$ are the same, then they also agree on $ \widetilde{\pi}_*\operatorname{Sym}^2 \mathcal{T}_{ M^{par,s}_{\SL_r,\bm{\alpha}}/S}$.

\end{proposition} 

\subsection{Abelianization and determinant of cohomology}

It is enough to consider the case of parabolic Higgs bundles of degree zero and rank $r$ 
with full flag and arbitrary parabolic weights $\widetilde{\alpha}$.
Consider a generic point $\vec{b}$ of the Hitchin base for the parabolic Higgs moduli 
space $\mathcal{H}^{par,ss}_{\widetilde{\bm \alpha}}$ with full flag and weights 
${\widetilde{\bm \alpha}}$, and let
$\widetilde{p}\,:\,\widetilde{C}_{\vec{b}}\,\to
C$ be the spectral cover of $C$ determined by the chosen point $\vec{b}$ of the Hitchin base. 
The map $\widetilde{p}$ is of degree $r$ and is fully ramified at the points $\bm 
{p}\,=\,({p_1,\,\cdots,\, p_n})$. Let $\bm {q}\,=\,(q_1,\,\cdots,\, q_n)$ be the inverse image
$\widetilde{p}^{-1}(\bm{p})$ of the points $\bm{p}$.
It is known \cite{ Faltings:93,LogaresMartens:10} that the generic fiber $A_{\vec{b}}$ of 
the Hitchin map at $\vec{b}$ is exactly the Jacobian $J(\widetilde{C}_{\vec{b}})$.
Let $L$ be a line bundle on $\widetilde{C}$ giving a point of $A_{\vec{b}}$ and consider the push-forward $\widetilde{p}_*L$ on $C$. Consider the divisor $D\,=\,p_1+\dots+p_n$. There is a natural inclusion of sheaves 
\begin{equation}\label{eqn:inducedflag}
\widetilde{p}_*(L\otimes \mathcal{O}_{\widetilde{C}_{\vec{b}}}(-(r-1)R))\,\subset\, \cdots
\,\subset\,\widetilde{p}_*(L\otimes \mathcal{O}_{\widetilde{C}_{\vec{b}}}(-(r-i)R))\,
\subset\, \cdots\,\subset \,\widetilde{p}_*L
\end{equation}
with quotients supported on $D$ giving a quasiparabolic structure on $\pi_*L$ at the
points ${\bm p}$. Here $R$ is the ramification divisor $(\widetilde{p}^*D)_{\rm red}$. 
Hence, this gives a rational map from $A_{\vec{b}}$ to the $M_{\widetilde{\bm
		\alpha}}^{par}$. The fiber of the pull-back of the the parabolic determinant of
cohomology $\operatorname{ParDet}(\bm \alpha)$ to the abelian variety at the point $L \,\in\, A_{\vec{b}}$ is a rational linear combinations of elements of the form 
\begin{enumerate}
	\item  $H^0(\widetilde{C}_{\vec{b}},L)^{\vee}\otimes H^1(\widetilde{C}_{\vec{b}},L) \otimes \det (\widetilde{p}_*L)_{p_i}^{\frac{\chi(\pi_*L)}{r}}$,
	
	\item $\det \operatorname{Gr}^j\mathscr{F}(\widetilde{p}_*L)_{p_i}\otimes \det^{-1} 
	(\widetilde{p}_*L)_{p_i}$ for all $1\,\leq \,i\,\leq\, n$.
\end{enumerate}
However, observe that the second expression for each $p_i$ is independent
of $L$ and is equal to  the line $\mathcal{O}_{\widetilde{C}_{\vec{b}}}(-q_i)_{|q_i}
\,=\,{K_{{\widetilde{C}_{\vec{b}}}}}_{|q_i}$. Indeed, this follows from the facts that
\begin{itemize}
	\item $\det \operatorname{Gr}^j\mathscr{F}(\widetilde{p}_*L)_{p_i}\,=\,
	L_{q_i}\otimes \mathcal{O}_{\widetilde{C}_{\vec{b}}}(-jq_i)_{|q_i}\otimes
	\mathcal{O}_{\widetilde{C}_{\vec{b}}}(-(j-1)q_i)_{|q_i}^{-1} $
	\item $\det (\widetilde{p}_*L)_{p_i}\,=\,L_{q_i}$ 
\end{itemize}
together with the natural flag structure given by \eqref{eqn:inducedflag}.

The calculations above show that the pull-back of 
$\operatorname{ParDet}(\bm \alpha)$ to the abelian variety only depends on the factors of 
type (1). The map $\cup [\mathbb{L}]:
H^0(A_{\vec{b}},\mathcal{T}_{A_{\vec{b}}})\to
H^1(A_{\vec{b}},\mathcal{O}_{A_{\vec{b}}})$  thus depends only on the level for all 
$\mathbb{L}\,\in\,\operatorname{Pic}(M_{\widetilde{\bm \alpha}}^{par})\otimes {\mathbb{Q}}$. 
Thus we have proved the following proposition:
\begin{proposition}\label{prop:sum3}
	Let $\mathbb{L} \in \operatorname{Pic}(M_{\SL_r, \widetilde{\bm \alpha}}^{par,s})\otimes \mathbb{Q}$, then the natural map induced by the first Chern class of the restriction of $\mathbb{L}$ between $H^0(A_{\vec{b}},\mathcal{T}_{A_{\vec{b}}})\to
	H^1(A_{\vec{b}},\mathcal{O}_{A_{\vec{b}}})$ depends only on the level of $\mathbb{L}$. 
	\end{proposition}
\subsection{Proof of Theorem \ref{thm:indepdence}} For the convenience of the reader let us recall the statement of Theorem \ref{thm:indepdence} from the beginning of the section.

	\begin{theorem}
		Let $\mathbb{L}$ be an element of
		$\operatorname{Pic}(M^{par,rs}_{G, \bm{\beta}})\otimes \mathbb{Q}$ of
		level $a$. Then as linear maps
		$\pi_{e*}\operatorname{Sym}^2\mathcal{T}_{M^{par,rs}_{G, \bm{\beta}}/S}\,
		\to\, R^1\pi_{e*}\mathcal{T}_{M^{par,rs}_{G,\bm{\beta}}/S}$, we have:  $\cup\, [\mathbb{L}]\,=\,\cup\, a [\operatorname{Det}]$, where $\operatorname{Det}$ is the determinant of cohomology (nonparabolic) line bundle. 
	\end{theorem}

\begin{proof}
	The proof follows from Propositions \ref{prop:sum0}, \ref{prop:sum1}, \ref{prop:sum2}, and \ref{prop:sum3} and fact that any line bundle on $M_{G,\bm \beta}$ is obtained as pulled back of a rational multiple of a line bundle on the moduli space of parabolic bundles for $G=\SL_r$.
\end{proof}
\section{The parabolic Hitchin connection}\label{sec:proof}

In this section we will use Theorem \ref{thm:hitchimain}
and the results from \cite{BMW0} on Ginzburg dglas and 
the class of the parabolic determinant of cohomology $\mathcal{L}$ 
to construct a flat projective connection on the vector
bundle $\pi_{e*}\mathcal{L}^k$,
where $\pi_e: {M}^{par,rs}_{G}\rightarrow S$ is the projection.

\subsection{Definition of the symbol}

We first seek a candidate for the 
symbol map $$\rho_{par}: \mathcal{T}_S 
\rightarrow \pi_{e*}\operatorname{Sym}^2\mathcal{T}_{{M}^{par,rs}_{G}/S}.$$
As in the nonparabolic case, 
set $\widetilde{\rho}:=\rho_{sym}\circ KS_{\Ccal/S}$. 
Let $k\geq 1$ be a positive integer, and let 
$\mathcal{L}_{\phi}$ be a line bundle on ${M}^{par,rs}_{G}$
constructed via its identification with $\Gamma$-$G$-bundles of 
fixed local type, a representation $\phi: G\rightarrow \SL_r$, 
and the restriction of determinant of cohomology from $\widehat{M}_{\SL_r}$.
We first recall the  main result  \cite[Cor.\ 4.13 and Prop.\ 4.12]{BMW0} 
that relates the class $\beta(\mathcal{P},\bm{\lambda})$ with the Atiyah
class of $[\mathcal{L}_{\phi}]$ of the line bundle $\mathcal{L}_{\phi}$.

\begin{theorem}
	Let $m_{\phi}$ be the Dynkin index of the map $\phi: G \rightarrow \SL_r$. Then 
	\begin{equation}
	\beta(\mathcal{P},\bm{\lambda})=\frac{1}{m_{\phi}}[\mathcal{L}_{\phi}]\ .
	\end{equation}
\end{theorem}
Now we further expand $\mu_{\mathcal{L}_{\phi}^{\otimes k}} \circ \frac{1}{m_{\phi}k}\widetilde{\rho}$ and  get the following: 
\begin{align*}
\mu_{\mathcal{L}_{\phi}^{\otimes k}} \circ \frac{1}{m_{\phi}k}\widetilde{\rho}&
=\frac{1}{m_{\phi}}.\bigl( (\cup\,
    (k[\mathcal{L}_{\phi}]-\frac{1}{2}[\Omega_{{M}^{par,rs}_{G}/S}]))\circ
    \frac{1}{k}\rho_{sym}\circ KS_{\Ccal/S}\bigr)\\ 
&=\frac{1}{m_{\phi}}\bigl(\cup\, [\mathcal{L}_{\phi}]\circ \rho_{sym}\circ
KS_{\Ccal/S}-\cup\, \frac{1}{2k}[\Omega_{{M}^{par,rs}_{G}/S}]\circ
    \rho_{sym} \circ KS_{\Ccal/S}\bigr)\\
&=\cup\, \beta(\mathcal{L}_{\phi})
\circ \rho_{sym} \circ KS_{\Ccal/S}-\cup\,  \frac{1}{2m_{\phi}k}[\Omega_{{M}^{par,rs}_{G}/S}]\circ \rho_{sym} \circ KS_{\Ccal/S}\\
&=-\Phi\circ KS_{\Ccal/S}-\cup\,  \frac{1}{2m_{\phi}k}[\Omega_{{M}^{par,rs}_{G}/S}]
\circ \rho_{sym} \circ KS_{\Ccal/S}\\
&=-KS_{{M}^{par,rs}_{G/S}}-\cup\,
    \frac{1}{2m_{\phi}k}[\Omega_{{M}^{par,rs}_{G}/S}]\circ \rho_{sym} \circ
    KS_{\Ccal/S}\ .
\end{align*}
In the above,  we have used the fundamental equalities 
$$\beta(\mathcal{L}_{\phi})=\frac{1}{m_{\phi}}[\mathcal{L}_{\phi}]\ \text{ and }\
\beta(\mathcal{L}_{\phi})\circ\rho_{sym}+\Phi=0.$$
Thus we get the following equation:
\begin{equation}\label{eqn:lookingforhitchinsymbol}
KS_{{M}^{par}_{G}/S}+ \mu_{\mathcal{L}_\phi^{\otimes k}} \circ \frac{1}{m_{\phi}k}\widetilde{\rho}+
\cup\, \frac{1}{2m_{\phi} k}[\Omega_{{M}^{par,rs}_{G}/S}]\circ \rho_{sym}
    \circ KS_{\Ccal/S}=0\ .
\end{equation}
We now have  a key result.

\begin{proposition} \label{prop:iso}
The map  $\mu_{\mathcal{L}_{\phi}^{\otimes k}}: \pi_{e*}\operatorname{Sym}^2\mathcal{T}_{{M}^{par,rs}_{G}/S}\rightarrow R^1\pi_{e*}\mathcal{T}_{{M}^{par,rs}_{G}/S}$ is an isomorphism. 
\end{proposition}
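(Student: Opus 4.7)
The approach is to adapt the argument of \cite[\S 4.8]{BBMP20} for the vector bundle case to the parabolic $G$-bundle setting, using the identifications built up in Sections \ref{sec:par-atiyah} and \ref{sec:symbol}. The plan proceeds in three steps.

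First, I would verify that the composite symbol
\[
\widetilde{\rho} := \rho_{sym}\circ KS_{\mathcal{C}/S}: \mathcal{T}_S \lra \pi_{e\ast}\operatorname{Sym}^2\mathcal{T}_{M^{par,rs}_G/S}
\]
is an isomorphism. Versality of the pointed family gives that $KS_{\mathcal{C}/S}$ is an isomorphism, while $\rho_{sym}$ is an isomorphism by the parabolic analog of \cite[Lem.\ 4.3.2]{BBMP20}: via the commutative diagram \eqref{equation:Hitchinmapasincommonlit}, $\rho_{sym}$ is Serre-dual on $\mathcal{C}$ to the quadratic component of the strongly parabolic Hitchin map, which is an isomorphism onto the quadratic part of the Hitchin base $\pi_{s\ast}\Omega_{\mathcal{C}/S}^{\otimes 2}(D)$. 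I would also note that, by Theorem \ref{thm:cup-product} combined with the fact that cup product with $\beta(\mathcal{P},\bm\lambda)$ realizes the Serre duality pairing on $\mathcal{C}$ between $\pi_{e\ast}\operatorname{Sym}^2\mathcal{T}_{M/S}$ and $R^1\pi_{e\ast}\mathcal{T}_{M/S}$, the Kodaira-Spencer map $KS_{M^{par,rs}_G/S}$ is likewise an isomorphism.

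Second, I would use the computation immediately preceding the proposition, which yields the identity
\[
\mu_{\mathcal{L}_\phi^{\otimes k}}\circ\widetilde{\rho} \;=\; -m_\phi k\cdot KS_{M^{par,rs}_G/S} \;-\; \tfrac{1}{2}\bigl([\Omega_{M^{par,rs}_G/S}]\cup\widetilde{\rho}\bigr).
\]
By the parabolic analog of Theorem \ref{thm:atiyah-bott} (available through Corollary \ref{cor:atiyah-det-ginzburg} together with the results of \cite{DW97, BiswasRaghavendra}), the Atiyah class $[\mathcal{L}_\phi]/m_\phi$ coincides with the Atiyah-Bott symplectic class. Writing $[\Omega_{M^{par,rs}_G/S}]$ in the Picard group as a combination of the adjoint-determinant contribution and the additional line bundles generated by the parabolic structure at the marked points, one then checks that the only component acting nontrivially under $\cup\widetilde{\rho}$ is the adjoint-determinant component, contributing a factor of $-h^\vee$ (the dual Coxeter number). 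This produces
\[
\mu_{\mathcal{L}_\phi^{\otimes k}}\circ\widetilde{\rho} \;=\; -(m_\phi k + h^\vee)\cdot KS_{M^{par,rs}_G/S},
\]
which is an isomorphism in characteristic zero (and whenever $p\nmid (m_\phi k + h^\vee)$). Since $\widetilde{\rho}$ and $KS_{M/S}$ are both isomorphisms, so is $\mu_{\mathcal{L}_\phi^{\otimes k}}$.

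The main obstacle is the second step: in the non-parabolic case monotonicity directly expresses $[\Omega_{M/S}]$ as a scalar multiple of $[\mathcal{L}_\phi]$, but this property fails in the parabolic setting, where $\operatorname{Pic}(M^{par,rs}_G)$ has higher rank. The additional components of $[\Omega_{M/S}]$ coming from the parabolic/flag-variety directions must be shown to pair trivially with the image of $\widetilde{\rho}$ under the cup product into $R^1\pi_{e\ast}\mathcal{T}_{M/S}$. This is a geometric decoupling between curve deformations (where $\widetilde{\rho}$ lands) and flag-variety directions at the marked points, and is where both the orbifold-genus hypothesis of Lemma \ref{lem:codimension} and the invariant-push-forward description from Section \ref{sec:par-principal} play an essential role.
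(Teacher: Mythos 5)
There is a genuine gap, and it sits exactly at your second step. The identity
\[
\mu_{\mathcal{L}_\phi^{\otimes k}}\circ\widetilde{\rho} \;=\; -(m_\phi k + h^\vee)\cdot KS_{M^{par,rs}_G/S}
\]
requires that every component of $[\Omega_{M^{par,rs}_G/S}]$ other than the ``adjoint-determinant'' one cups trivially against the image of $\widetilde{\rho}$. You assert this (``one then checks'') but give no argument, and it is precisely the monotonicity-type statement that the paper emphasizes \emph{fails} for parabolic moduli: $\operatorname{Pic}(M^{par,rs}_G)$ has higher rank, $[\Omega_{M^{par,rs}_G/S}]$ is not proportional to $[\mathcal{L}_\phi]$, and there is no decoupling of the marked-point contributions from the curve-deformation directions established anywhere in the paper. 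Indeed, if your identity held, the fiducial symbol $\tfrac{1}{m_\phi k+h^\vee}\rho_{sym}\circ KS_{\Ccal/S}$ would already solve the van Geemen--de Jong equation and the whole point of the modified symbol \eqref{eqn:hitchparsymbol} (built from $\mu_{\mathcal{L}_\phi^{\otimes k}}^{-1}$ applied to the $[\Omega]$-term in \eqref{eqn:lookingforhitchinsymbol}) would evaporate; the paper keeps that term precisely because no such cancellation is available. Your argument is also circular relative to the paper's logic: Proposition \ref{prop:iso} is what licenses inverting $\mu_{\mathcal{L}_\phi^{\otimes k}}$ in the definition of $\rho_{par}$, so you cannot prove it by an identity that presupposes the $[\Omega]$-correction is unnecessary. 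Finally, even granting the identity, your conclusion needs $\widetilde{\rho}$ (equivalently $\rho_{sym}$) to be an isomorphism onto $\pi_{e*}\Sym^2\mathcal{T}_{M^{par,rs}_G/S}$ and $KS_{M^{par,rs}_G/S}$ to be an isomorphism onto $R^1\pi_{e*}\mathcal{T}_{M^{par,rs}_G/S}$; neither is proved in the paper, and both are nontrivial statements (the first is a parabolic analog of Hitchin's computation of $H^0(\Sym^2\mathcal{T})$, the second a rigidity statement for the parabolic moduli space), so importing them only adds further unproven steps.

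The paper's proof is much more modest and avoids all of this: it passes to the $(\Gamma,G)$-picture on the Galois cover $\widehat{C}$, observes that over the open locus $Y^{par,rs}_G=\phi^{-1}(\widehat{M}^{rs}_G)$ the map $\mu_{\mathcal{L}_\phi^{\otimes k}}$ is obtained from the corresponding map on $\widehat{M}^{rs}_G$ by restriction to $\mathcal{T}_{Y^{par,rs}_G/S}$ and taking $\Gamma$-invariants, invokes the nonparabolic isomorphism $\mu_{\mathcal{L}^{\otimes k}}:\pi_{e*}\Sym^2\mathcal{T}_{\widehat{M}^{rs}_G/S}\to R^1\pi_{e*}\mathcal{T}_{\widehat{M}^{rs}_G/S}$ from \cite{Hitchin:90} and \cite{BBMP20}, and then extends across the complement of $Y^{par,rs}_G$, which has codimension at least two by Lemma \ref{lem:codimension}. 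If you want a correct proof, this reduction-to-the-cover route is the one to follow; the symbol computation you perform belongs to the discussion \emph{after} the proposition, where $\mu^{-1}_{\mathcal{L}_\phi^{\otimes k}}$ is used to define $\rho_{par}$, not inside its proof.
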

\begin{proof}
   Let 
    $Y_{G}^{par,rs}:=\phi^{-1}(\widehat{M}_{G}^{rs})\subset M_{G}^{par,rs}$, 
    where $\phi: M_{G}^{par,ss}\rightarrow \widehat{M}_{G}^{ss}$ is the natural forgetful map. 
    By Lemma \ref{lem:codim2}, the codimension of the complement of $Y_{G}^{par,rs}$ 
    in $M_{G}^{par,rs}$ is at least three, 
    so it enough to show that $\mu_{\mathcal{L}_{\phi}}$ is an isomorphism over 
    $Y_{G}^{par,rs}$. Now by Theorem \ref{thm:indepdence}, it follows that it suffices to show that $\cup[\mathcal{L}_{\phi}]$ is an isomorphism.
Observe that in the nonparabolic case, the canonical class is a multiple of
    the ample generator of the Picard group of $M_{G}$. Hence, for 
the nonparabolic case ${\mu}_{\mathcal{L}_{\phi}}$ is a nonzero multiple of $\cup [\mathcal{L}_{\phi}]$.
By construction, the map $\cup{\mathcal{L}_{\phi}^{\otimes k}}: \pi_{e*}\operatorname{Sym}^2\mathcal{T}_{{Y}^{par,rs}_{G}/S}\rightarrow R^1\pi_{e*}\mathcal{T}_{{Y}^{par,rs}_{G}/S}$ is first obtained by restricting the map $\cup{\mathcal{L}_{\phi}^{\otimes k}}:\pi_{e*}\operatorname{Sym}^2\mathcal{T}_{\widehat{M}^{rs}_{G}/S}\rightarrow R^1\pi_{e*}\mathcal{T}_{\widehat{M}^{rs}_{G}/S}$ to $\mathcal{T}_{{Y}^{par,rs}_{G}/S}$ and then taking invariants. 
Consequently, we will be done if we can show that the following map is an isomorphism:
$
\cup{\mathcal{L}_{\phi}^{\otimes k}}:\pi_{e*}\operatorname{Sym}^2\mathcal{T}_{\widehat{M}^{rs}_{G}/S}\rightarrow R^1\pi_{e*}\mathcal{T}_{\widehat{M}^{rs}_{G}/S}
$.
This is proved in \cite{Hitchin:90} and also in \cite{BBMP20}
in the algebro-geometric set-up for $G=\SL_r$, where $\mathcal{L}$ is the determinant of cohomology line bundle. For an
arbitrary $G$, we can choose a faithful irreducible representation $\phi: G \rightarrow \SL_r$ and get a map
$f: \widehat{M}_{G}\rightarrow \widehat{M}_{\SL_r}$ which restricts to a
    map $f: \widehat{M}^{rs}_{G}\rightarrow \widehat{M}^s_{\SL_r}$. Since 
    any short exact sequence of $G$-modules splits,
this induces a splitting of the tangent bundle of the moduli spaces:
$f^*\mathcal{T}_{\widehat{M}_{\SL_r}^{s}/S}=\mathcal{T}_{\widehat{M}^{rs}_{G}/S}\oplus
    W$,
 along with the diagram 
\begin{equation}
\begin{tikzcd}
\pi_*\operatorname{Sym}^2 \mathcal{T}_{ \widehat{M}^{s}_{\SL_r}/S}\arrow[r, "\cup \mathbb{L}"]\arrow[from=d,"\operatorname{Sym}^2Df"]& R^1\pi_*\mathcal{T}_{ \widehat{M}^{s}_{\SL_r}/S}\arrow[d]\\
{\pi_{G}}_*\operatorname{Sym}^2 \mathcal{T}_{ \widehat{M}^{rs}_{G}/S}\arrow[r,"\cup \mathbb{L}"]&R^1{\pi_G}_*\mathcal{T}_{ \widehat{M}^{rs}_{G}/S},
\end{tikzcd}
\end{equation}
where $\pi\,:\, \widehat{M}^{s}_{\SL_r}\,\to\, S$ and
$\pi_G\,:\, \widehat{M}^{rs}_{G}\,\to\, S$ are the natural
projections. Thus, we are again reduced to the case of $G=\SL_r.$
\end{proof}

Since the map $\mu_{\mathcal{L}_{\phi}^{\otimes k}}$ is an isomorphism, from \eqref{eqn:lookingforhitchinsymbol} we get that
\begin{equation}\label{eqn:almosthitchinsymbol}
KS_{{M}^{par,rs}_{G}/S}+ \mu_{\mathcal{L}_{\phi}^{\otimes k}} \circ
\Bigl(\frac{1}{m_{\phi}k}{\rho_{sym}}+\mu_{\mathcal{L}_{\phi}^{\otimes
	k}}^{-1}\circ(\cup\,
    \frac{1}{2m_{\phi}k}[\Omega_{{M}^{par,rs}_{G}/S}])\circ \rho_{sym}
    \Bigr)\circ KS_{\Ccal/S}=0.
\end{equation}
Motivated by \eqref{eqn:almosthitchinsymbol}, we \emph{define
} the parabolic Hitchin symbol $\rho_{par}$ to be: 
\begin{equation}\label{eqn:hitchparsymbol}
\rho_{par}:=\Bigl(\frac{1}{m_{\phi}k}+\mu_{\mathcal{L}_{\phi}^{\otimes
	k}}^{-1}\circ(\cup\, \frac{1}{2m_{\phi}k}[\Omega_{{M}^{par,rs}_{G}/S}])
    \Bigr)\circ {\rho_{sym}}\circ KS_{\Ccal/S}.
\end{equation}

\begin{remark}By Theorem \ref{thm:indepdence}, we see that ${\mu}_{\mathcal{L}_{\phi}}$ is a nonzero multiple of $\cup[\mathcal{L}_{\phi}]$ and hence $\mu_{\mathcal{L}_{\phi}^{\otimes
			k}}^{-1}\circ(\cup\, \frac{1}{2m_{\phi}k}[\Omega_{{M}^{par,rs}_{G}/S}]$ is a nonzero multiple of identity. This is
essentially akin to the nonparabolic situation.
In the case of the moduli space of rank $r$ vector bundles with trivial
determinant, it turns out that the class of the canonical bundle is
$[\Omega_{M^s_{\SL_r}/S}]=-2r[\mathcal{L}]$, where $\mathcal{L}$ is the
ample generator of the Picard group. Hence,  $\mu_{\mathcal{L}^{\otimes
		k}}^{-1}=-\frac{2r}{r+k}(\cup\, [\Omega_{M^s_{\SL_r}/S}])^{-1}$, and $\rho_{par}$ in this case
is just $\frac{1}{r+k}\rho_{sym}\circ KS_{\Ccal/S}$ as in
\cite{Hitchin:90}. Our results also recover and generalize those of \cite{SS}.
\end{remark}

By construction, we get the following: 

\begin{lemma}\label{lemma:parabolic}
The parabolic Hitchin symbol $\rho_{par}$ defined in 
\eqref{eqn:hitchparsymbol} satisfies   the condition in
Theorem \ref{thm:hitchimain} (i).
\end{lemma}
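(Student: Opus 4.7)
The proof will be a direct verification that essentially assembles the computation carried out in the paragraphs immediately preceding the lemma; it is a formal consequence of the definition \eqref{eqn:hitchparsymbol} of $\rho_{par}$. The plan is to trace through the chain of identifications and rewrite equation \eqref{eqn:almosthitchinsymbol} as the required van Geemen--de Jong condition $KS_{M^{par,rs}_{G}/S} + \mu_{\mathcal{L}_{\phi}^{\otimes k}} \circ \rho_{par} = 0$.

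The key ingredients that enter the verification, all established earlier in the paper, are:
\begin{enumerate}
\item The standard formula for the coboundary map associated to a power of a line bundle, namely $\mu_{\mathcal{L}_{\phi}^{\otimes k}} = \cup\,\bigl(k[\mathcal{L}_{\phi}] - \tfrac{1}{2}[\Omega_{M_{G}^{par,rs}/S}]\bigr)$, which is the analog in this setting of the identity used in \cite{welters} and \cite{BBMP20}.
\item Corollary \ref{cor:atiyah-det-ginzburg}, which gives the relative Atiyah-class identification $\beta(\mathcal{L}_{\phi}) = \tfrac{1}{m_{\phi}}[\mathcal{L}_{\phi}]$ under the isomorphism $\tfrac{1}{m_{\phi}}\operatorname{At}(\mathcal{L}_{\phi}^{-1}) \cong R^{1}\pi_{n\ast}({}^{par}\mathfrak{S}^{-1}(\mathcal{P}))$.
\item Theorem \ref{thm:cup-product}: $\Phi + (\cup\, \beta(\mathcal{P},\bm\lambda))\circ \rho_{sym} = 0$, together with the identification $\Phi \circ KS_{\mathcal{C}/S} = KS_{M^{par,rs}_{G}/S}$ coming from the commutative diagram \eqref{equation:Phidefinition}.
\item Proposition \ref{prop:iso}, which guarantees that $\mu_{\mathcal{L}_{\phi}^{\otimes k}}$ is invertible, so that the formal expression in \eqref{eqn:hitchparsymbol} makes sense and \eqref{eqn:lookingforhitchinsymbol} can be rewritten in the form \eqref{eqn:almosthitchinsymbol}.
\end{enumerate}

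With these inputs in hand, the proof will proceed as follows. First, starting from $\widetilde{\rho} = \rho_{sym}\circ KS_{\mathcal{C}/S}$, I compute $\mu_{\mathcal{L}_{\phi}^{\otimes k}} \circ \tfrac{1}{m_{\phi}k}\widetilde{\rho}$ using item (1), and split it into the two terms $\tfrac{1}{m_{\phi}}(\cup\, [\mathcal{L}_{\phi}])\circ \rho_{sym}\circ KS_{\mathcal{C}/S}$ and $-\tfrac{1}{2m_{\phi}k}(\cup\, [\Omega_{M^{par,rs}_{G}/S}])\circ \rho_{sym}\circ KS_{\mathcal{C}/S}$. Applying item (2) rewrites the first term as $(\cup\, \beta(\mathcal{L}_{\phi}))\circ \rho_{sym}\circ KS_{\mathcal{C}/S}$, which by item (3) equals $-\Phi \circ KS_{\mathcal{C}/S} = -KS_{M^{par,rs}_{G}/S}$. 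Collecting these yields exactly \eqref{eqn:lookingforhitchinsymbol}. Invoking (4), I then move the residual canonical-class term inside $\mu_{\mathcal{L}_{\phi}^{\otimes k}}$ to produce \eqref{eqn:almosthitchinsymbol}. By the definition \eqref{eqn:hitchparsymbol}, the bracketed expression appearing there is exactly $\rho_{par}$, so \eqref{eqn:almosthitchinsymbol} becomes
\[
KS_{M^{par,rs}_{G}/S} + \mu_{\mathcal{L}_{\phi}^{\otimes k}}\circ \rho_{par} = 0,
\]
which is condition (i) of Theorem \ref{thm:hitchimain}.

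There is no real obstacle in the proof, since every conceptual step is already in place: Theorem \ref{thm:cup-product} carried the geometric content (producing the Atiyah-Bott style identity), Corollary \ref{cor:atiyah-det-ginzburg} provided the translation between the Ginzburg complex and the determinant line bundle, and Proposition \ref{prop:iso} made it legitimate to invert $\mu_{\mathcal{L}_{\phi}^{\otimes k}}$. If anything needs care, it is simply checking that the formula $\mu_{\mathcal{L}^{\otimes k}} = \cup\,(k[\mathcal{L}] - \tfrac{1}{2}[\Omega])$ holds in the parabolic setting on $M^{par,rs}_{G}$; this is a standard identity for the symbol of the natural sequence of differential operators on $\mathcal{L}^{\otimes k}$ and transfers to the parabolic moduli space since the arguments are local on the regularly stable locus, where $M^{par,rs}_{G}$ is smooth.
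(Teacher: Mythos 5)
Your proposal is correct and follows the paper's own route exactly: the paper's proof of this lemma is literally ``by construction,'' since the chain you describe --- expanding $\mu_{\mathcal{L}_{\phi}^{\otimes k}}\circ\frac{1}{m_{\phi}k}\widetilde{\rho}$ via $\cup\,(k[\mathcal{L}_{\phi}]-\tfrac12[\Omega_{M_{G}^{par,rs}/S}])$, substituting $\beta(\mathcal{L}_{\phi})=\tfrac{1}{m_{\phi}}[\mathcal{L}_{\phi}]$ and Theorem \ref{thm:cup-product} together with $\Phi\circ KS_{\Ccal/S}=KS_{M^{par,rs}_{G}/S}$, and then inverting $\mu_{\mathcal{L}_{\phi}^{\otimes k}}$ by Proposition \ref{prop:iso} --- is precisely the computation in the paragraphs preceding the lemma, of which \eqref{eqn:hitchparsymbol} is the definition tailored to make condition (i) hold.
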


\subsection{Welters' condition}

In this subsection, we show 
that for $M={M}^{par,rs}_{G}$, the condition in
Theorem \ref{thm:hitchimain} (ii) is satisfied.
In fact, 
we will prove a stronger statement in the set-up of parabolic $G$-bundles.

\begin{lemma}\label{lem:welters1}
Let $M^{par,rs}_{G}$ be the moduli space of regularly stable parabolic $G$-bundles
on a curve $C$. Then $H^1(M^{par,rs}_{G}, \mathcal{O}_{M^{par,rs}_{G}})=0$.
\end{lemma}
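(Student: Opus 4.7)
My approach is to deduce $H^1(\mathcal{O})=0$ from finite generation of $\operatorname{Pic}(M^{par,rs}_G)$, via the identification of $H^1(\mathcal{O})$ with the tangent space of the Picard functor at the identity. This sidesteps the obstruction to Grauert–Riemenschneider vanishing mentioned in the introduction: we only need vanishing for the trivial line bundle, not a general ample one.

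First, I would recall that by the work of Laszlo–Sorger and Kumar–Narasimhan–Ramanathan in the non-parabolic case, extended to the parabolic setting by Pauly (via Seshadri's identification of parabolic bundles with $(\Gamma,G)$-bundles together with the determinant line bundles $\mathcal{L}_\phi$ attached to representations $\phi : G \to \SL_r$ as described in the appendix), the Picard group $\operatorname{Pic}(M^{par,ss}_G)$ is a finitely generated, torsion-free abelian group.

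Second, the genus hypothesis together with Lemma \ref{lem:codimension} guarantees that the complement of the regularly stable locus has codimension at least two in $M^{par,ss}_G$. Since $M^{par,ss}_G$ is a normal projective variety, restriction of line bundles yields an isomorphism $\operatorname{Pic}(M^{par,ss}_G) \isorightarrow \operatorname{Pic}(M^{par,rs}_G)$, so the Picard group of the smooth open locus is likewise finitely generated.

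Finally, I would conclude as follows. The Picard functor of the smooth variety $M^{par,rs}_G$ is representable near the identity by a locally algebraic group scheme whose tangent space is canonically $H^1(M^{par,rs}_G, \mathcal{O})$. If this tangent space were nonzero, the identity component $\operatorname{Pic}^0(M^{par,rs}_G)$ would be a positive-dimensional connected algebraic group over $\mathbb{C}$, and would therefore have uncountably many complex points. This contradicts countability of the finitely generated group $\operatorname{Pic}(M^{par,rs}_G)$, forcing $\operatorname{Pic}^0(M^{par,rs}_G)=0$ and hence $H^1(M^{par,rs}_G, \mathcal{O}) = 0$.

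The main obstacle is Step 2, i.e.\ ensuring that the complement of the regularly stable locus in $M^{par,ss}_G$ is of codimension at least two and that finite generation of the Picard group transfers correctly to the smooth open stratum. Both rely on Lemma \ref{lem:codimension} and the classical codimension estimates of Faltings for the non-regularly-stable locus in $\widehat{M}_G^{ss}$, pulled back along the forgetful morphism $\phi : M^{\bm\tau,ss}_G \to \widehat{M}^{ss}_G$ analyzed in the preceding section.
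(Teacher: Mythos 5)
Your overall strategy is the same as the paper's: identify $H^1(M^{par,rs}_G,\Ocal_{M^{par,rs}_G})$ with the tangent space (Lie algebra) of the Picard group at the identity and deduce vanishing from discreteness of the Picard group, using a Laszlo--Sorger type Picard computation together with the codimension-two estimate of Lemma \ref{lem:codimension}. The genuine gap is in your Step 2, exactly where you flagged the difficulty. For the coarse space $M^{par,ss}_G$, which is only known to be a normal projective variety with rational singularities, a codimension-two complement gives only \emph{injectivity} of the restriction $\Pic(M^{par,ss}_G)\to\Pic(M^{par,rs}_G)$; surjectivity would require every line bundle on the regularly stable locus to extend as a line bundle, i.e.\ local factoriality, which is not available here. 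What restriction does give is $\Pic(M^{par,rs}_G)\cong \operatorname{Cl}(M^{par,ss}_G)$, and finite generation of the divisor class group of a normal projective variety does not follow from finite generation of its Picard group (a cone over an elliptic curve already has finitely generated $\Pic$ but uncountable class group). Since your final countability argument needs $\Pic(M^{par,rs}_G)$ itself to be countable, the argument as written does not close. (Also, the finite-generation input you attribute to Pauly covers parabolic vector bundles; for a general simple $G$ the available statement is Laszlo--Sorger's computation of the Picard group of the moduli \emph{stack} of quasi-parabolic $G$-bundles \cite{LaszloSorger:97}, which is what the paper uses.)

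The repair is the paper's route: run the codimension-two argument on the smooth moduli stack rather than on the singular coarse space. By \cite{LaszloSorger:97} the Picard group of $\mathcal{P}ar_{G}(C,\vec{P})$ is discrete; since this stack is smooth and the complement of the regularly stable substack has codimension at least two (Lemma \ref{lem:codimension}), restriction induces an isomorphism of Picard groups (cf.\ \cite[Lemma 7.3]{BH2}); and $\Pic(M^{par,rs}_G)$ injects into the Picard group of the regularly stable substack, which is a $Z(G)$-gerbe over $M^{par,rs}_G$, hence is discrete as well. With this replacement of your Step 2, your tangent-space/countability conclusion in Step 3 goes through unchanged.
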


\begin{proof}It suffices to show that the Picard group of the moduli space $M^{par,rs}_G$ is
discrete, since the space $H^1(M^{par,rs}_{G}, \mathcal{O}_{M^{par,rs}_{G}})$ can be 
considered as the Lie algebra of the Picard group of $M^{par,rs}_{G}$.
Hence, it is enough to show that the Picard group of the corresponding 
moduli stack $\mathcal{P}ar_{G}^{rs}(C,\vec{P})$ is discrete.
By  \cite{LaszloSorger:97}, 
it is known that the Picard group of the moduli stack
$\mathcal{P}ar_{G}(C,\vec{P})$ of quasiparabolic $G$-bundles is
discrete. Thus, we will be done if we can show that the codimension of
the complement of the regularly stable locus has codimension at least
two, as the inclusion will then  induce an isomorphism on the Picard groups
(cf.\ \cite[Lemma 7.3]{BH2}).  But this is the content of 
Lemma \ref{lem:codimension} below.
\end{proof}

\begin{lemma}\label{lem:tangentzero}
With the notation of Lemma \ref{lem:welters1},
$H^0(M^{par,rs}_{G}, \mathcal{T}_{M^{par,rs}_{G}})=0$.
\end{lemma}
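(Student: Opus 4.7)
The plan is to invoke the Hitchin integrable system for strongly parabolic $G$-Higgs bundles. Let ${}^{spar}\mathcal{H}^G_C$ denote the moduli space of strongly parabolic $G$-Higgs bundles on $C$, and let $h:{}^{spar}\mathcal{H}^G_C\to B$ be its Hitchin map. Using the trace duality of Proposition \ref{prop:parabolictrace}, the cotangent bundle $T^*M^{par,rs}_{G}$ embeds as a Zariski open subset of ${}^{spar}\mathcal{H}^G_C$. For parabolic bundles the two structural properties highlighted in the introduction are available: $h$ is proper with connected fibers over the affine base $B$, and the complement of $T^*M^{par,rs}_{G}$ in ${}^{spar}\mathcal{H}^G_C$ has codimension at least two (the genus hypothesis on $\mathscr{C}$ is what makes these inputs hold).

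First I would convert a global vector field $\xi\in H^0(M^{par,rs}_{G},\mathcal{T}_{M^{par,rs}_{G}})$ into a regular function $f_\xi$ on $T^*M^{par,rs}_{G}$ by pairing with the tautological 1-form; by construction $f_\xi$ is linear along each cotangent fiber and vanishes on the zero section. Since the complement of $T^*M^{par,rs}_{G}$ inside ${}^{spar}\mathcal{H}^G_C$ has codimension at least two and ${}^{spar}\mathcal{H}^G_C$ is smooth along this open subset, Hartogs' extension produces a unique regular extension $\widetilde f_\xi\in H^0({}^{spar}\mathcal{H}^G_C,\mathcal{O})$.

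Next, properness of $h$ and connectedness of its fibers force $h_\ast\mathcal{O}_{{}^{spar}\mathcal{H}^G_C}=\mathcal{O}_B$, so $\widetilde f_\xi$ descends to an element of $H^0(B,\mathcal{O}_B)=\mathbb{C}[B]$. The ring $\mathbb{C}[B]$ is generated by the coefficients of the characteristic polynomial of the Higgs field; for a simple Lie algebra $\gfrak$ all of their degrees in the Higgs-field variable are the fundamental degrees $d_i\geq 2$. Hence no nonzero element of $\mathbb{C}[B]$ is homogeneous of degree one in the cotangent-fiber direction. Combined with $f_\xi|_{\text{zero section}}=0$, this forces $f_\xi=0$, and the nondegeneracy of the canonical pairing $T^*M^{par,rs}_{G}\otimes \mathcal{T}_{M^{par,rs}_{G}}\to\mathcal{O}$ then gives $\xi=0$.

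The main obstacle is the appeal to the two structural inputs on the parabolic Hitchin system (properness/connectedness of $h$, and the codimension-two estimate); these are precisely the properties singled out in the introduction as the reason the paper restricts to parabolic rather than parahoric $(\Gamma,\mathrm{Aut}(G))$-bundles. Under the genus hypothesis $g(\mathscr{C})\geq 2$ they are standard, and may be invoked via Faltings and the results cited (e.g.\ the parabolic extensions of Donagi--Pantev worked out by B.\ Wang).
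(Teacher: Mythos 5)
Your proposal is correct and follows essentially the same route as the paper's proof, which (following Hitchin) embeds $\mathcal{T}^{\vee}_{M^{par,rs}_{G}}$ into the strongly parabolic Higgs moduli space, extends the degree-one function attached to a vector field by Hartogs, and uses properness and connectedness of the Hitchin fibration to descend it to the base, where the homogeneity degrees rule it out. You merely make explicit two points the paper leaves implicit — the codimension-two estimate justifying Hartogs and the fact that $\mathbb{C}[B]$ is generated in degrees $d_i\geq 2$ — which is a faithful elaboration rather than a different argument.
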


\begin{proof} The proof follows the steps given in \cite{Hitchin:90}. Firstly,  $\mathcal{T}^{\vee}_{M_{G}^{par,rs}}$ embeds into the moduli space of strongly parabolic $G$-Higgs bundles $\mathcal{H}_{G}^{par, ss}$. 
Now given a global vector field on $M^{par,rs}_{G}$, pairing it with the cotangent bundles 
produces a function on $\mathcal{T}^{\vee}_{M_{G}^{par,rs}}$, which via Hartogs' theorem extends to
a function of degree one (with respect to the standard $\mathbb{C}^*$-action) 
on the Higgs moduli space $\mathcal{H}_{G}^{par, ss}$. Since 
the Hitchin fibration is proper (Lemma \ref{lem:proper}) with connected fibers
(\cite[Sec.\ 5]{Hitchin:87b},  \cite[Cor.\ III.3]{Faltings:93} and
    \cite[Claim 3.5]{DonagiPantev:12} for nonparabolic Higgs bundles;
    \cite[Cor.\ V.5]{Faltings:93} for strongly parabolic with full flags;
    \cite[Sec.\ 4.5]{BW1}, \cite[Thm.\ 1.2]{BW2} for all strongly parabolic cases)
it descends to a function on the Hitchin base. This is impossible since the
degree of homogeneity is one. Thus we are done.
\end{proof}
The following result is well known (\cite[Thm.\
13]{BaragliaKamgarpourVarma:19}, and also \cite{yokogawa} for $G=\GL_r$), but for completeness  we include a brief proof of it.

\begin{lemma}\label{lem:proper}The Hitchin map
    $\Hit:\mathcal{H}_{G}^{par,ss}\to \Bcal$ is proper. 
	\end{lemma}
\begin{proof}
	In \cite{BiswasRamanujan} strongly parabolic Higgs bundles on a curve $C$ are constructed as  $\Gamma$-$G$-Higgs
bundles on a $\Gamma$-cover $\widehat{C}$ of $C$. 
    Let $\Hcal_G^{ss}(\widehat C)$ denote the moduli of semistable Higgs
    bundles on $\widehat C$, with Hitchin base $\widehat\Bcal$, and Hitchin
    map $\Hit_{\widehat C}$.  Note that in the strongly parabolic setting,
    we have an inclusion $\imath: \Bcal\hookrightarrow\widehat \Bcal$.
    Then we have a commutative diagram:
    $$
    \begin{tikzcd}
        \mathcal{H}_{G}^{par,ss}\arrow[r,"F"]\arrow[d,"\Hit",swap]
        &\Hcal_G^{ss}(\widehat C) \arrow[d,"\Hit_{\widehat C}"] \\
        \Bcal \arrow[r,hook, "\imath"]& \widehat\Bcal
    \end{tikzcd}
    $$
    Here, $F$ is the forgetful map sending a $\Gamma$-$G$-bundle on
    $\widehat C$ to the underlying $G$-bundle.
    Since  $F$  and the Hitchin map $\Hit_{\widehat C}$
    are proper, and the map $\imath$ is a closed embedding, we conclude
    that $\Hit$ is also proper.
\end{proof}

Finally, we are in a position to prove the main theorem.

\begin{proof}[Proof of the Main Theorem]
For the conditions in Theorem  \ref{thm:hitchimain}: (i) is the
statement of Lemma \ref{lemma:parabolic}, (ii) follows from 
Lemma \ref{lem:welters1} and Lemma
\ref{lem:tangentzero}, and (iii) is the connectedness of the moduli space. 
For the conditions in Theorem  \ref{thm:flat}:  (i) follows as in
\cite{Hitchin:90}, using integrability results in
\cite{LogaresMartens:10}, \cite{BaragliaKamgarpourVarma:19} and \cite{BW2} (ii) follows 
from Proposition
\ref{prop:iso}, and  (iii)  is the statement in Lemma
\ref{lem:tangentzero}. This completes the proof.  
\end{proof}

We now apply the main theorem to extend the result to case of
the simple groups which are not necessarily simply connected.
 
\begin{proof}[Proof of Corollary \ref{cor:semisimpleG}] Take $s\in S$ and let
$C_{s}$ be the corresponding smooth $n$-pointed curve. Consider the moduli 
space ${M}^{par, rs,0}_{{{G}}}(C_{s})=\pi^{-1}(s)$
for a connected, simple group $H$. Let ${G}$ be the simply connected 
cover of $H$ and ${M}^{par, rs}_{{{G}}}(C_{s})$ 
the corresponding moduli space. 
Consider the map between moduli spaces 
${M}^{par, rs}_{{{G}}}(C_{s})$ and 
${M}^{par, rs,0}_{H}(C_s)$ induced by the quotient map $\widetilde{G} \rightarrow G$. 
This map is \'etale on the base with Galois group $\Gamma$ 
which is a subgroup 
of the center $Z({{G}})$ of ${G}$. Any element $\gamma \in \Gamma$, acts 
on ${M}^{par, rs}_{{{G}}}(C_{s})$ by twisting. This action of $\gamma$ evidently commutes with the Hitchin map.
Hence, if we  consider the same symbol as in the simply connected case, 
the same arguments in \cite[Cor.\ 5.2 and Lemma 4.1]{Belkale:08} tell us that 
the projective connection constructed for simply connected group
commutes with the action on $\Gamma$. Thus we see that $\pi_*\mathcal{L}_{\bm \lambda,k}$
is a twisted $D$-module, and so it is locally free. 
\end{proof}

\appendix
\section{Parabolic $G$-bundles} \label{sec:app-par}

Let $G$ be a simple, simply connected complex algebraic group 
and $(C,\vec{p})$  an
$n$-pointed smooth projective curve of genus $g$.
Let $\mathfrak{h}$ be a Cartan subalgebra of the Lie algebra $\mathfrak{g}$ of the group $G$. We 
further let $\Delta_+$ denote the set of simple positive roots $\alpha_i$, and let $\theta$ 
denote the highest root of $\mathfrak{g}$. 
Define the fundamental alcove
$$\Phi_0:=\{h \in \mathfrak{h} |\ \alpha_i(h)\geq 0, \  \mbox{and} \
\theta(h)\leq 1 \ \forall \ \alpha_i\}\ .$$
For $h\in \Phi_0$, we denote by $P(h)$ the standard parabolic subalgebra
of $\mathfrak{g}$, and $\mathfrak{p}(h)$ will 
denote the corresponding Lie subalgebra of $\mathfrak{g}$. The following result is standard 
and can be found in \cite[Thm.\ 7.9]{Helgason:78}:

\begin{lemma}\label{lem:exp}Let $K$ be a maximal compact subgroup of $G$.
The exponential map\break $$h\rightarrow \exp(2\pi\sqrt{-1}h)$$ induces a natural bijection 
between $\Phi_0$ and the set of $K$ orbits for the adjoint action of $K$ on itself. 
\end{lemma}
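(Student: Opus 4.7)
The plan is to factor the claimed bijection through the classical identification of conjugacy classes in a compact connected Lie group with orbits of the affine Weyl group on a Cartan subalgebra. First I would fix a maximal torus $T \subset K$ whose Lie algebra is the real form $i\mathfrak{h}_{\mathbb{R}} \subset \mathfrak{h}$ spanned by the coroots, so that the exponential map $\exp(2\pi\sqrt{-1}\,\cdot\,) : \mathfrak{h}_{\mathbb{R}} \to T$ is well defined. The maximal torus theorem for compact connected Lie groups asserts that every element of $K$ is $K$-conjugate to an element of $T$, and that two elements of $T$ are $K$-conjugate precisely when they lie in the same orbit of the Weyl group $W = N_K(T)/T$. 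This reduces the claim to showing that the composition $\Phi_0 \hookrightarrow \mathfrak{h}_{\mathbb{R}} \xrightarrow{\exp(2\pi\sqrt{-1}\,\cdot\,)} T$ descends to a bijection between $\Phi_0$ and $T/W$.

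Next I would identify the kernel of $\exp(2\pi\sqrt{-1}\,\cdot\,) : \mathfrak{h}_{\mathbb{R}} \to T$. Since $G$ is simply connected, the integral lattice $\ker \exp(2\pi\sqrt{-1}\,\cdot\,)$ equals the coroot lattice $Q^\vee$, so that $T \cong \mathfrak{h}_{\mathbb{R}}/Q^\vee$ as a $W$-set. Consequently $W$-orbits on $T$ correspond to orbits on $\mathfrak{h}_{\mathbb{R}}$ of the \emph{affine Weyl group} $W_{\mathrm{aff}} = W \ltimes Q^\vee$, acting by its standard (affine) reflection representation.

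The final step is to invoke the classical fact that $\Phi_0$ is a \emph{strict} fundamental domain for $W_{\mathrm{aff}}$ on $\mathfrak{h}_{\mathbb{R}}$: each $W_{\mathrm{aff}}$-orbit meets $\Phi_0$ in exactly one point. The defining inequalities of $\Phi_0$, namely $\alpha_i(h) \geq 0$ for each simple root $\alpha_i$ and $\theta(h) \leq 1$ for the highest root $\theta$, are precisely the walls of a fundamental alcove for the affine reflection action (see Helgason, or Bourbaki, \emph{Lie Groups and Lie Algebras}, Ch.\ VI), and the strict fundamental domain property follows from standard alcove combinatorics. Concatenating the three bijections $\Phi_0 \leftrightarrow \mathfrak{h}_{\mathbb{R}}/W_{\mathrm{aff}} \leftrightarrow T/W \leftrightarrow K/\mathrm{Ad}(K)$ yields the statement.

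The main obstacle is essentially expositional: verifying the strict fundamental domain property on the boundary of $\Phi_0$ requires care about long versus short roots in the non-simply-laced cases, and one must be attentive to the fact that the correct affine reflection group is built from $Q^\vee$ (not the coweight lattice), which is exactly what the simply-connectedness of $G$ guarantees. Everything else is bookkeeping built on the maximal torus theorem and the combinatorics of affine Weyl groups.
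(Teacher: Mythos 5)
Your proposal is correct. The paper does not actually prove this lemma; it simply cites Helgason and treats it as standard, and your argument is precisely the standard one behind that citation: reduce to $T/W$ via the maximal torus theorem, use simple connectedness of $G$ (hence of $K$) to identify $\ker\bigl(\exp(2\pi\sqrt{-1}\,\cdot\,)\bigr)$ with the coroot lattice $Q^\vee$, and then invoke the fact that the closed alcove $\Phi_0$ is a strict fundamental domain for $W\ltimes Q^\vee$ acting on $\mathfrak{h}_{\mathbb{R}}$ — with the correct observation that simple connectedness is exactly what rules out the extended affine Weyl group, for which the alcove would no longer meet each orbit exactly once.
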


For any one parameter subgroup $\varphi: \mathbb{G}_m \rightarrow G$, 
the {\em Kempf's parabolic subgroup} is defined as:
$P(\varphi):=\{g \in G\,\,\mid\,\, \lim_{t\rightarrow
    0}\varphi(t)g\varphi(t)^{-1} \ \mbox{exists in $G$}\}$.
Every $\tau \in \Phi_0$ determines a $1$-parameter subgroup of $G$ and
hence by above  a parabolic subgroup  $P(\tau)$. It directly follows that
the Lie algebra of $P(\tau)$ is the Kempf's parabolic subalgebra 
\begin{equation*}\label{eqn:kempfparaalgeb}
\mathfrak{p}(\tau):=\{X\in \mathfrak{g}\,\,\mid\,\, \lim_{t\rightarrow
    \infty}\operatorname{Ad}(\exp t\tau)\cdot X \ \mbox{exists in
    $\mathfrak{g}$}\}\ .
\end{equation*}

We now recall the definition of the moduli stack of quasi-parabolic
bundles. We refer the reader to \cite[Ch.\ 5.1]{kumarnewbook}
\begin{definition}\label{def:quasiparabolicstack}The quasi parabolic moduli stack $\mathcal{P}ar_{G}
	(C,\vec{P})$ is the stack parametrizing pairs $(\mathcal{E},\vec{\sigma})$, 
	where $\mathcal{E}$ is a principal $G$-bundle on a smooth curve $C\times T$, 
	with $T$ being any scheme, and $\sigma_i$ are sections over $T$ 
	of $\mathcal{E}_{|p_i\times T}/P_i$ while $\vec{P}=(P_1,\dots,P_n)$ 
	are an $n$-tuple of standard parabolic subgroups of $G$. 
\end{definition}
We now recall the definition of a parabolic $G$-bundle on a smooth pointed curve $(C,\vec{p})$. 

\begin{definition}
A parabolic structures on a principal $G$-bundle $E\rightarrow C$ is given by the following data:
\begin{itemize}
	\item A choice of parabolic weights ${\bm \tau }=(\tau_1,\dots, \tau_n) \in \Phi_0^n$, 
	where $\tau_i$ is the  parabolic weight attached to the point $p_i \in C$. 
	\item a section $\sigma_i$ of the homogeneous space $E_{p_i}/P({\tau_i})$, 
	where $P(\tau_i)$ is the standard parabolic associated to $\tau_i \in \Phi_0$. 
\end{itemize}
\end{definition} 
A family of parabolic $G$-bundles parametrized by a scheme $T$ is defined analogously. 
of a section $\sigma_i$ for every $1\leq i\leq n$.  
Similarly extend the definitions of parabolic structures  when $G$ is connected and reductive. 

\subsection{Uniformization of quasiparabolic bundles}
Most of the results in this section can be easily modified for semi-simple groups, however for simplicity we restrict ourselves to the case when $G$ is simple and simply connected. 

For any simple, simply algebraic group 
$G$, let $\operatorname{L}_{G}$ be the corresponding loop group and 
$\operatorname{L}^{+}_{G}\subset \operatorname{L}_{G}$ the subgroup of positive loops. 
The affine Grassmannian $\mathcal{Q}_{G}$ is defined to be $\operatorname{L}_{G}/\operatorname{L}^{+}_{G}$. 
Let $q$ be a point on a the curve $C$. 

Consider the functor 
$\mathscr{S}_{G, C\backslash q}$ from the category of $k$-algebras $\mathcal{A}lg$ to  the category  $\mathcal{S}ets$ that
assigns to an $k$-algebra $R$, isomorphism classes of pairs $(E_{R},\sigma_R)$, where $E_{R}$ is a principal $G$ bundle
over $X\times \operatorname{Spec}R$ and $\sigma $ is a section of $E_{R}$ over $(C\backslash q)\times
\operatorname{Spec}R$ (cf.\ \cite[Sec.\ 3.5]{LaszloSorger:97}). 
The following statement, which uses a crucial {\em uniformization} result of Drinfeld-Simpson \cite{DS},
gives a geometric realization of the affine Grassmannian $\mathcal{Q}_{G}$.

\begin{proposition}
The affine Grassmannian $\mathcal{Q}_{G}$ represents the functor
$\mathscr{S}_{G, C\backslash q}$. Moreover, there is a universal
principal $G$-bundle $\mathbb{U}\to C\times \mathcal{Q}_{G}$ and  section
$\sigma_{\mathcal{Q}_{G}}$, such for any $[E_R,\sigma_R]\in
\mathscr{S}_{G,C\backslash q}$ and any morphism $f: \operatorname{Spec}R \rightarrow \mathcal{Q}_{G}$,
$$[(\id\times f)^*\mathbb{U}, (id \times
    f)^*\sigma_{\mathcal{Q}_{G}}]=[E_{R}, \sigma_R]\ .$$
\end{proposition}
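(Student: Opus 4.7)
The plan is to prove this via Beauville--Laszlo descent combined with the Drinfeld--Simpson local trivialization theorem. Fix a formal parameter $z$ at $q$, and write $\mathcal{D}_q = \operatorname{Spec}\widehat{\mathcal{O}}_{C,q}$ for the formal disk and $\mathcal{D}_q^\ast$ for the punctured formal disk. By Beauville--Laszlo gluing, giving a $G$-bundle on $C\times\operatorname{Spec} R$ is equivalent to specifying a $G$-bundle on $(C\setminus q)\times\operatorname{Spec} R$, a $G$-bundle on $\mathcal{D}_q\times\operatorname{Spec} R$, and an isomorphism between their pullbacks to $\mathcal{D}_q^\ast\times\operatorname{Spec} R$. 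This is the key reformulation that lets us recognize the moduli data as gluing data on the affine Grassmannian.

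First I would construct the morphism $f:\operatorname{Spec} R\to \mathcal{Q}_{G}$ associated to a pair $[E_R,\sigma_R]\in\mathscr{S}_{G,C\setminus q}(R)$. The section $\sigma_R$ already supplies a trivialization on $(C\setminus q)\times\operatorname{Spec} R$. By Drinfeld--Simpson \cite{DS}, the restriction $E_R\bigr|_{\mathcal{D}_q\times\operatorname{Spec} R}$ becomes trivial after a faithfully flat base change $R\to R'$ (and étale, since $G$ is simple and simply connected). Fixing such a trivialization, the comparison with $\sigma_R$ on the punctured formal disk yields an element $g\in L_{G}(R')=G(R'(\!(z)\!))$, and a different choice of trivialization over $\mathcal{D}_q$ alters $g$ by right multiplication by $L^+_{G}(R')=G(R'[\![z]\!])$. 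Thus $[g]$ is a well-defined element of $\mathcal{Q}_{G}(R')$, and fpqc descent (applied both to the cocycle condition for $g$ on $R'\otimes_R R'$ and to the uniqueness statement in Beauville--Laszlo) pushes the class down to a morphism $f:\operatorname{Spec} R\to\mathcal{Q}_{G}$. Conversely, every morphism $f$ produces such a pair, giving the representability.

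Next, the universal pair $(\mathbb{U},\sigma_{\mathcal{Q}_G})$ on $C\times\mathcal{Q}_{G}$ is built directly by the same gluing: take the trivial $G$-bundle on $(C\setminus q)\times\mathcal{Q}_{G}$ (with tautological section $\sigma_{\mathcal{Q}_G}$) and the trivial $G$-bundle on $\mathcal{D}_q\times\mathcal{Q}_{G}$, and glue them over $\mathcal{D}_q^\ast\times\mathcal{Q}_{G}$ by the tautological loop coming from the universal element of $\mathcal{Q}_{G}=L_G/L^+_G$. The Beauville--Laszlo theorem guarantees this glued object is an honest $G$-bundle on $C\times\mathcal{Q}_{G}$. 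The universal property $(\mathrm{id}\times f)^\ast(\mathbb{U},\sigma_{\mathcal{Q}_G})\cong (E_R,\sigma_R)$ is then automatic, since by construction both sides have identical gluing data on the two pieces of $C\times\operatorname{Spec} R$.

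The main technical obstacle is justifying the Drinfeld--Simpson step with enough functoriality to descend the class $[g]\in\mathcal{Q}_{G}(R')$ back to $\mathcal{Q}_G(R)$; this requires verifying that the fpqc descent datum on $R'\otimes_R R'$ produced by two different local trivializations of $E_R|_{\mathcal{D}_q\times\operatorname{Spec} R}$ really takes values in $L^+_G$, which in turn uses that $\mathcal{Q}_G$ is an fpqc sheaf (indeed an ind-scheme, so its functor of points satisfies fpqc descent). Once this is in place, everything else is formal bookkeeping with Beauville--Laszlo gluing.
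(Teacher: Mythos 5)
The paper itself offers no proof of this proposition: it is stated as a known geometric realization of $\mathcal{Q}_G$, with the references (Drinfeld--Simpson, and implicitly Beauville--Laszlo, Laszlo--Sorger, Kumar--Narasimhan--Ramanathan) standing in for the argument. Your proposal reconstructs exactly that standard argument -- Beauville--Laszlo gluing along the formal disk at $q$, local triviality of the restriction to the disk after a faithfully flat base change, and fpqc descent of the resulting coset in $LG/L^+G$ -- so in substance you are following the same route the paper is pointing to, and the representability half of your argument is sound. Two remarks: the triviality of $E_R$ on $\mathcal{D}_q\times\operatorname{Spec}R'$ is not really the Drinfeld--Simpson theorem (which concerns triviality over the \emph{open} part $(C\setminus q)\times\operatorname{Spec}R'$, a trivialization you already have by hypothesis via $\sigma_R$); it is the easier statement that a torsor under a smooth affine group over $\operatorname{Spec}R$ is \'etale-locally trivial and that such a trivialization lifts along the $z$-adic completion $R'[[z]]$. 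The citation is therefore misplaced, though the step itself is fine.

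The one place where your write-up, as literally stated, would not go through is the construction of the universal pair $(\mathbb{U},\sigma_{\mathcal{Q}_G})$: you glue "by the tautological loop coming from the universal element of $\mathcal{Q}_G=L_G/L^+_G$", but no such universal loop exists globally, since the projection $LG\to LG/L^+G$ is an $L^+G$-torsor admitting sections only locally (\'etale-locally, or on the pieces of the ind-structure), not a map with a global section. The standard repair is to perform the Beauville--Laszlo gluing after pulling back to $LG$ (or over a local trivializing cover of the torsor), observe that changing the chosen lift by an element of $L^+G$ extends over the disk and hence induces a canonical isomorphism of the glued bundles compatible with the tautological section over $(C\setminus q)$, and then descend the bundle-with-section to $C\times\mathcal{Q}_G$. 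With that descent step made explicit, the universal property $[(\id\times f)^*\mathbb{U},(\id\times f)^*\sigma_{\mathcal{Q}_G}]=[E_R,\sigma_R]$ follows, as you say, by comparing gluing data, and your proof is complete.
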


Let $\operatorname{L}_{C,\vec{p}}(G)$ be the punctured loop ind-group  $\operatorname{Mor}(C\backslash \vec{p}, G)$ that
parametrizes morphisms $C\backslash \vec{p} \rightarrow G$ from the punctured curve. The following result of Laszlo-Sorger
\cite{LaszloSorger:97} expresses the moduli stack of principal $G$-bundles as a quotient stack.

\begin{proposition}\label{prop:Laszlo}
The stacks $\mathcal{P}ar_{G}(C,\vec{P})$ and $\operatorname{L}_{C,q}(G)\backslash \left( \mathcal{Q}_{G}
\times \prod_{i=1}^nG/P_i\right)$ are isomorphic, where $q$ is a point on $C\backslash \vec{p}$ and $\operatorname{L}_{C,q}(G)$
acts on $G/P_i$ by evaluation at the point $p_i$. Moreover, $\operatorname{Pic}(\Par_G(C,\vec{P}))\cong
\mathbb{Z}\times \prod_{i=1}^n \operatorname{Pic}(G/P_i)$ if $G$ is simply connected. 
\end{proposition}

We now describe another uniformization of the moduli stack $\mathcal{P}ar_{G}(C,\vec{P})$ that connects directly to the moduli stack of 
$\Gamma$-equivariant bundles of fixed topological type that will be discussed in Appendix \ref{sec:gammabundles}.
For an $n$-tuple of points $\vec{p}=(p_1,\dots, p_n)$, we choose formal parameters $t_i$ at $p_i$, i.e.,
$\widehat{\mathcal{O}}_{C,p_i}=\mathbb{C}((t_i))$. 
Consider the natural evaluation map at $t_i=0$, 
$ev_0: G[[t_i]] \lra G$, from the Iwahori subgroup $G[[t_i]]$.
For any standard parabolic subgroup $P_i \subset G$, we denote by $\mathcal{P}_j:=ev_0^{-1}P_i$
the standard parahoric subgroup of the loop group.
Now consider the reduced ind-scheme $\operatorname{L}_{C,\vec{p}}(G)$ as discussed above. Then any element of
$\operatorname{L}_{C,\vec{p}}(G)$ acts on $G((t_i))/\mathcal{P}_j$ via Laurent expansion at the point $p_i$
in the local parameter $t_i$. As in Proposition 2.8 of \cite{KNR:94}, we have a family 
of principal $G$-bundles $\mathbb{U}_{par}$ on $C\times \prod_{i=1}^nG((t_i))/G[[t_i]]$ such that the following three hold:
\begin{enumerate}
\item The bundle $\mathbb{U}_{par}$ is $\operatorname{L}_{C,\vec{p}}(G)$ equivariant.

\item There is a section $\sigma_{par}$ of $\mathbb{U}_{par}$ over $(C\backslash \vec{p} )\times \prod _{i=1}^nG((t_i))/G[[t_i]]$
which extends to a section on a formal disc around the punctures $p_i$. 

\item The section $\sigma_{par}$ satisfies the condition
$\gamma\cdot \sigma(q,[g_1],\dots, [g_n])=\sigma(q,[g_1],\dots,
        [g_n])\gamma(q)$, where
$[g_i]$ is the  class of an element $g_i \in G((t_i))$, $\gamma \in \operatorname{L}_{C,\vec{p}}(G)$ and $q\in C\backslash
\vec{p}$. Moreover, the pair $(\mathbb{U}_n, \sigma)$ is unique up to an unique isomorphism satisfying the above properties. 
\end{enumerate} 
Now pulling back $\mathbb{U}_{par}$ via the natural $\operatorname{L}_{C,\vec{p}}(G)$-equivariant  projection
$$\prod_{i=1}^n G((t_i))/\mathcal{P}_i\rightarrow \prod_{i=1}^n
G((t_i))/G[[t_i]]\ ,$$
we obtain a natural
$\operatorname{L}_{C,\vec{p}}(G)$-equivariant principal $G$-bundle on 
$C\times  \prod_{i=1}^n G((t_i))/\mathcal{P}_i$. Hence, using this
$G$-bundle $\mathbb{U}_{par}$ and the section $\sigma_{par}$, we obtain the
following well known result  (\cite[pp.\ 181-182]{kumarnewbook}, and also
\cite[Prop.\ 3.3]{BalajiSeshadri}, \cite[Thm.\ 1.3]{LaszloSorger:97}): 

\begin{proposition}\label{prop:paravacua}
The stack $\mathcal{P}ar_{G}(C,\vec{P})$ is isomorphic to $\operatorname{L}_{C,\vec{p}}(G)\backslash \prod_{i=1}^n G((t_i))/\mathcal{P}_i.$
\end{proposition}

\subsection{Parabolic bundles and associated constructions}

Let $P\subset G$ be a standard parabolic subgroup  with Levi subgroup $L_P$ containing a maximal torus $H$. Consider the set $S_P$ of simple 
roots of the Levi subalgebra $L_P$ of the parabolic $P$. If $P=P(h)$ for some $h\in \Phi_0$, then  $S_{P}:=\{\alpha_i \in
\Delta_+\mid \alpha_i(h)=0\}.$
The group of characters $X(P)$ of the parabolic subgroup $P$ can be identified with the subset of the dual Cartan subalgebra
$$\mathfrak{h}_{\mathbb{Z},P}^{\vee}:=\{\lambda \in \hfrak^\vee\,\,\mid  \ \lambda(\alpha_i^{\vee})\in \mathbb{Z}, \, \forall \alpha_i, \ \mbox{and} \
\lambda(\alpha_i^{\vee})=0, \, \forall \alpha_i \in S_P\}.$$
In terms of the fundamental weights $\omega_1,\dots, \omega_\ell$ of the Lie algebra $\mathfrak{g}$, we get that
$$\mathfrak{h}_{\mathbb{Z},P}^{\vee}:=\bigoplus_{\alpha_i \neq S_P} \mathbb{Z}\omega_i.$$
Let ${\bm \tau}=(\tau_1,\dots, \tau_n) \in \Phi_0^n$ be a choice of parabolic weight. 

We further assume that each $\tau_i \in \Phi_0$ is {\em rational }, i.e., we can write
$\tau=\overline{\tau}_i/d_i $ for some positive integers $d_i$ and
$\exp(2\pi\sqrt{-1}\overline{\tau}_i)=1$, so $d_i\cdot\tau$ is in the coroot lattice (i.e. in the lattice spanned by the set of coroots $\Phi^{\vee}\subset \mathfrak{h})$.  The integers $d_i$ are not unique. 

If $G=\SL_r$ and consider the standard representation of $\SL_r$,  then a choice of a rational $\tau \in \Phi_0$ via the normalized Killing form $\kappa$ is the same as the choice of an integer $k\leq r$, a sequence of integers $\bm{r}:=(r_1,\dots, r_k)$ such that $\sum_{i=1}^{k}r_i=r$ and a nondecreasing sequence $0\leq \alpha_1< \dots < \alpha_{k}< 1$. Hence a rational  parabolic structure on a vector bundle $\mathscr{V}$ on a curve $C$ associated to a parabolic $\SL_r$-bundle at the points $p_1,\dots,p_n$ is equivalent to the following: 
\begin{enumerate}
\item A choice of a flag of the fiber $\mathscr{V}_{|p_i}$  associated to the $k_i$-tuple $\bm r_i$ for each $1\leq i\leq n$
\begin{equation*}
\mathscr{F}_{\bullet, p_i}:=\left(0\subseteq
F_{k_{i}+1}(\mathscr{V}_{|p_i})\subseteq
F_{k_i}(\mathscr{V}_{|p_i})\subseteq \dots \subseteq F_{1}(\mathscr{V}_{|p_i})=\mathscr{V}_{|p_i}\right)
\end{equation*}
such that $\dim \operatorname{Gr}^j \mathscr{F}_{\bullet, p_i}=r_{j,p_i}$.
\item For each $p_i$, a sequence of rational numbers $\bm{\alpha}_{p_i}$
\begin{equation}\label{equation:rationalweights}
0\leq \alpha_{1,i}<  \dots <\alpha_{k_i,i}<1.
\end{equation}
\end{enumerate}
We refer the reader to Mehta-Seshadri \cite{MehtaSeshadri} (for parabolic vector bundles), Ramanathan \cite{Ramanathan:96}, Biswas (\cite{Bisstable} and \cite {Biswasduke}), Balaji-Seshadri \cite{BalajiSeshadri} and Balaji-Biswas-Nagaraj \cite{BalajiBiswasNagaraj} for the notions of stability and semistability which is essential in defining the corresponding moduli spaces.

The following theorem is due to Mehta-Seshadri \cite{MehtaSeshadri} for parabolic vector bundles of rank $r$ and weight data $\bm \alpha$ and we will denote the moduli space by $M^{par,ss}_{\bm \alpha,r}(C)$.
It was proven for arbitrary semi-simple groups by Bhosle-Ramanathan
\cite{DesaleRamanathan}.  Following the work of Seshadri
(\cite{SeshadriI} and \cite{SeshadriII}), Balaji-Biswas-Nagaraj
\cite{BalajiBiswasNagaraj}, Balaji-Seshadri \cite{BalajiSeshadri}, we will discuss an alternative realization in the following section. 

\begin{theorem}
Let $(C,\vec{p})$ be a $n$-pointed smooth projective curve of genus $g$, and let ${\bm \tau}=(\tau_1,\dots, \tau_n)$ be a choice of rational parabolic weights in the fundamental alcove $\Phi_0$. 
We further assume that $\theta(\tau_j)<1$, where $\theta$ is the highest root of $\mathfrak{g}$. Then, the parabolic semistable $G$-bundles with a choice of rational parabolic weights ${\bm \tau}$ admit a coarse moduli space ${M}_{G,{\bm \tau}}^{par,ss}(C)$ which is a normal irreducible projective variety with rational singularities. 
Moreover, if $\iota: G\rightarrow G'$ is an embedding of connected simple, simply connected groups, then the  corresponding map between the moduli space ${M}_{G,{\bm \tau}}^{par,ss}\rightarrow {M}_{G',{\bm \tau}'}^{par,ss}$ is finite. Here ${\bm \tau}'=\iota({\bm \tau})$. 
\end{theorem}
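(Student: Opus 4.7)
The strategy is to reduce the existence and the geometric properties of $M_{G,\bm\tau}^{par,ss}(C)$ to the corresponding (non-parabolic) statements on a Galois cover, via Seshadri's equivalence between parabolic objects and $\Gamma$-equivariant objects. The argument will proceed in four stages.

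First I would use the rationality hypothesis $\tau_j=\overline\tau_j/d_j$ to construct a ramified Galois cover $p:\widehat C\to C$, with Galois group $\Gamma$, whose ramification indices at the points of $p^{-1}(\vec p)$ are the $d_j$ and whose local monodromy around each marked point is prescribed by $\overline\tau_j$. Such a cover exists because $g\geq 2$ guarantees that $\pi_1(C\setminus\vec p)$ surjects onto any prescribed finite quotient with the given local cyclic data (this is the standard Kawamata/Namba argument, valid once $\theta(\tau_j)<1$, which ensures the local type lands in the regular alcove). Over this $\widehat C$ I would then invoke the categorical equivalence of Seshadri \cite{SeshadriI, SeshadriII}, Biswas \cite{Biswasduke}, and Balaji--Biswas--Nagaraj \cite{BalajiBiswasNagaraj} between parabolic $G$-bundles on $(C,\vec p)$ of weight type $\bm\tau$ and $(\Gamma,G)$-bundles on $\widehat C$ of local type $\bm\tau$, verifying carefully that this equivalence preserves (semi)stability, $S$-equivalence, and the notion of family, as is done for vector bundles in \cite{Bisstable} and for general $G$ in \cite{BalajiSeshadri}.

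Second, with this equivalence in hand, the construction of $M_{G,\bm\tau}^{par,ss}(C)$ reduces to constructing the moduli space $M_{G,\widehat C}^{(\Gamma,ss)}$ of semistable $(\Gamma,G)$-bundles on $\widehat C$. I would follow Ramanathan's GIT construction \cite{Ramanathan:96} applied $\Gamma$-equivariantly: pick a faithful representation $G\hookrightarrow \SL_N$ (reducing to the vector bundle case, as in Mehta--Seshadri \cite{MehtaSeshadri}), use a $\Gamma$-linearized Quot scheme parametrizing $\Gamma$-linearized rank-$N$ sheaves with fixed local type, and perform GIT with respect to a $\Gamma$-linearized polarization. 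The quotient is projective because the Quot scheme is projective and the group action has the usual linearized structure; the equivalence above then identifies $M_{G,\widehat C}^{(\Gamma,ss)}\simeq M_{G,\bm\tau}^{par,ss}(C)$, giving projectivity of the latter.

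Third, for the geometric properties: irreducibility of $M_{G,\bm\tau}^{par,ss}(C)$ would follow from irreducibility of the stack of $(\Gamma,G)$-bundles of fixed local type, which in turn follows from the uniformization in Proposition \ref{prop:paravacua}, because each $\prod_i G((t_i))/\mathcal{P}_i$ is connected (as $G$ is simply connected) and the ind-scheme $\operatorname L_{C,\vec p}(G)$ acts with irreducible orbits on the semistable locus. Normality and rational singularities would follow from the analogous properties of the GIT quotient: the semistable $(\Gamma,G)$-locus in the Quot scheme is smooth at regularly stable points, so applying Boutot's theorem to the good quotient by a reductive group acting on a space with rational singularities yields rational singularities (hence normality) on the quotient. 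The assumption $\theta(\tau_j)<1$ ensures that the stable locus is nonempty and dense, which is what is needed for the Boutot-type argument. The main technical obstacle here, and the step I expect to require the most care, is the precise verification that semistability on the parabolic side matches $\Gamma$-equivariant semistability of the associated $\SL_N$-bundle on $\widehat C$ with respect to the chosen polarization; this is what links the GIT output to the intrinsic parabolic moduli problem.

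Finally, for the statement about the morphism induced by $\iota:G\hookrightarrow G'$, I would argue fiberwise. The map $M_{G,\bm\tau}^{par,ss}\to M_{G',\bm\tau'}^{par,ss}$ is a morphism of projective varieties, so properness is automatic; hence it suffices to verify that it has finite fibers. A fiber over $[E']$ parametrizes reductions of structure group of $E'$ to $G$ compatible with the parabolic data, modulo $S$-equivalence. Passing via Seshadri's equivalence to $\widehat C$, such reductions are classified by sections of the associated bundle $E'\times_{G'}(G'/G)\to \widehat C$ compatible with the $\Gamma$-action, and because $G\subset G'$ is a closed subgroup of semisimple groups with $G'/G$ affine, one gets finiteness by combining Ramanathan's classical argument with Kempf's semistable reduction to bound the set of compatible reductions. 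The main obstacle in this last step is again to keep track of the $\Gamma$-equivariance and of the local type under $\iota$, but once the local-type bookkeeping is done the finiteness is a direct application of the known $G$-bundle result.
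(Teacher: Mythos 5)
The paper does not actually prove this theorem: it is quoted as known, attributed to Mehta--Seshadri for $\GL_r$ and to Bhosle--Ramanathan for general $G$, whose original arguments run GIT directly on the parabolic side via parabolic Quot schemes and a weighted Hilbert--Mumford criterion. Your route through a Galois cover and the $(\Gamma,G)$-bundle correspondence is instead the ``alternative realization'' that the paper's Appendix B develops following Seshadri, Balaji--Biswas--Nagaraj and Balaji--Seshadri, and as a plan it does deliver the theorem: the cover exists for $g\geq 2$ by the Fuchsian/Selberg argument, the correspondence preserves families and (semi)stability, projectivity and rational singularities follow from equivariant GIT plus Boutot, and irreducibility follows from uniformization by the connected ind-scheme $\prod_i G((t_i))/\mathcal{P}_i$ (this step uses that $G$ is simply connected, the standing hypothesis of the appendix). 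Two points deserve sharper wording. First, the hypothesis $\theta(\tau_j)<1$ plays no role in constructing the cover, which only needs the denominators $d_j$; its actual function is to force the parahoric subgroups $\mathcal{P}_i$ to lie inside $G[[t_i]]$, so that $(\Gamma,G)$-bundles of local type ${\bm\tau}$ correspond to genuine parabolic $G$-bundles rather than general parahoric torsors. Second, for finiteness of ${M}_{G,{\bm\tau}}^{par,ss}\rightarrow {M}_{G',{\bm\tau}'}^{par,ss}$ the clean argument is that each fiber is simultaneously proper (closed in a projective variety) and affine (reductions of structure group of a fixed $E'$ are sections of the associated fibration with fiber $G'/G$, which is affine by Matsushima since $G$ is reductive in $G'$), hence finite; the appeal to Kempf's semistable reduction is unnecessary, but you do need the fact, from the cited work of Biswas and of Balaji--Biswas--Nagaraj, that semistability is preserved under extension of structure group, so that the map is defined on all of ${M}_{G,{\bm\tau}}^{par,ss}$ in the first place.
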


For notational convenience,  when the context is clear we will often suppress the subscript ${\bm \tau}$ and use $M_{G}^{par,ss}$
instead.

\begin{definition}
A parabolic $G$-bundle $\mathcal{P}$ with weights $\bm {\tau}$ is said to be regularly stable if it is stable and
the automorphism group of $\mathcal{P}$ is the center $Z(G)$ of $G$.
\end{definition} 

\subsection{Line bundles on parabolic moduli spaces}\label{sec:det}

In this section, we first recall the {\em determinant of cohomology } line bundle associated to a family of vector bundles $\mathcal{E}$ on a curve $C$ parametrized by a connected Noetherian scheme $T$. Let $\pi_T: T\times X \rightarrow T$ be the projection to the Noetherian
scheme, and consider $R\pi_{T,*} \mathcal{E}$ as an object of the bounded derived category $D^bCoh(T)$. 
We can represent $R\pi_{T,*}\mathcal{E}$ by a complex $\mathcal{E}_0\rightarrow \mathcal{E}_1\rightarrow 0$ of
vector bundles on $T$. We define the determinant line bundle up to a unique isomorphism to be the following: 
\begin{equation*}
\operatorname{Det}\mathcal{E}_T :=\bigwedge^{top} \mathcal{E}_1 \otimes \bigwedge^{top}\mathcal{E}_0^{\vee}.
\end{equation*}
We often drop $T$ in the notation of $\operatorname{Det}\mathcal{E}_T$
when the context is clear. For any closed point $t\in T$, the fiber of $\operatorname{Det}\mathcal{E}_T$
over $t$ is $\bigwedge^{top} \left(H^1(C, \mathcal{E}_t)\right)\otimes \bigwedge^{top}\left(H^0(C,\mathcal{E}_t)\right)^{\vee}.$
The determinant bundle has the following important properties: 
\begin{enumerate}
\item For any morphism $f: T'\rightarrow T$, we have 
$\operatorname{Det}(f\times \operatorname{id})^*\mathcal{E})_{T'}=f^*\operatorname{Det}\mathcal{E}_T.$

\item For any line bundle $L\to T$, we have 
$\operatorname{Det}(\mathcal{E})_T\otimes
        L^{-\chi(\mathcal{E}_0)}=\operatorname{Det}\operatorname(\mathcal{E}\otimes
        \pi_T^*L)_T$,
where $\chi(\mathcal{E}_0)$ is the Euler characteristic of the vector bundle $\mathcal{E}_{|t\times C}$ for
any point $t\in T$. 

\item For any short exact sequence of bundles
$0\to
\mathcal{E}_1\to \mathcal{E} \to \mathcal{E}_2\to 0$
on $T\times C$, we have 
$\operatorname{Det}\mathcal{E}_{1,T}\otimes \operatorname{Det}\mathcal{E}_{2,T}=\operatorname{Det}\mathcal{E}_{T}$.
\end{enumerate}
Let $\mathscr{SU}_C(r,\xi)$ be the moduli space of semistable
vector bundles of rank $r$ on a curve $C$ with determinant $\xi$ of degree $m$. It was proved by \cite{DN} that
the Picard group of $\mathscr{SU}_C(r,\xi)$ is $\mathbb{Z}\cdot\Theta$, where $\Theta$ is the ample
generator. The following result of Drezet-Narasimhan \cite{DN} connects the determinant of cohomology
with this $\Theta$-line bundle. 

\begin{proposition}\label{proposition:pullbacknarasimhan}
Let $\psi_{\mathcal{E}}: T\rightarrow \mathscr{SU}_C(r,\xi)$ be the morphism
corresponding to a family $\mathcal{E}$ of semistable bundles of rank $r$ and determinant $\xi$ parametrized
by a scheme $T$. Then the pullback of $\Theta$ via $\psi_{\mathcal{E}}$ is isomorphic to 
$\left(\operatorname{Det}\mathcal{E}_T\right)^{\frac{r}{(r,m)}}\otimes \left(\det \mathcal{E}_{|T\times p}\right)^{\frac{\chi}{(r,m)}}$, where $p$ is any point on the curve $C$, $m$ is the degree of the line bundle $\xi$, $(r,m)$ is the greatest common divisor and $\chi=\chi(F_{|t\times C})=m+r(1-g)$. 
\end{proposition}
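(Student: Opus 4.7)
The plan is to work on the moduli stack $\mathscr{M}^{ss}_C(r,\xi)$ rather than the coarse space $\mathscr{SU}_C(r,\xi)$, and to identify $\phi_{\mathcal{E}}^\ast\Theta$ via descent together with the Drezet--Narasimhan description $\operatorname{Pic}(\mathscr{SU}_C(r,\xi))=\mathbb{Z}\Theta$. The morphism $\phi_{\mathcal{E}}$ factors through the stack, so any line bundle on the stack constructed naturally out of a universal family (where it exists) pulls back to the corresponding line bundle on $T$ built from $\mathcal{E}$. Hence it suffices to identify, inside the Picard group of the stack, which combination of $\operatorname{Det}\mathcal{U}$ and $\det\mathcal{U}|_{\ast\times p}$ descends to $\Theta$ on the coarse space; once identified there, pulling back along $\phi_{\mathcal{E}}$ gives the claimed formula. (If a global universal family does not exist, one works on an \'etale cover of an open substack, or with the rigidified stack; the construction of $\operatorname{Det}\mathcal{E}_T$ and $\det\mathcal{E}|_{T\times p}$ is in any case functorial in $\mathcal{E}$.)

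First I would recall that automorphisms of any object in $\mathscr{M}^{ss}_C(r,\xi)$ contain the central $\mu_r\subset \operatorname{Aut}(\mathcal{U})$ acting by homotheties, and that line bundles on the stack descend to the coarse moduli space if and only if $\mu_r$ acts trivially on the fibers. I then compute the weights of the two natural line bundles on the stack under this central $\mu_r$-action $\mathcal{U}\mapsto \lambda\mathcal{U}$. For $\operatorname{Det}\mathcal{U}=\det R^1\pi_\ast\mathcal{U}\otimes(\det R^0\pi_\ast\mathcal{U})^\vee$, scaling sections scales $\det H^i$ by $\lambda^{h^i}$, so $\operatorname{Det}\mathcal{U}$ has weight $h^1-h^0=-\chi$. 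For $\det\mathcal{U}|_{\ast\times p}$, the rank is $r$, so the weight is $r$. A combination $\operatorname{Det}\mathcal{U}^{\,a}\otimes(\det\mathcal{U}|_{\ast\times p})^{b}$ therefore has weight $-a\chi+br$, and descends iff
\begin{equation*}
-a\chi+br\equiv 0\pmod r\,,\qquad\text{i.e.}\qquad a\chi\equiv 0\pmod r.
\end{equation*}
Using $\chi=m+r(1-g)$ one has $(r,\chi)=(r,m)$, so the minimal positive $a$ solving this is $a=r/(r,m)$; with this choice the congruence is solved (over the integers) by $b=\chi/(r,m)$, giving a canonical descended line bundle $\Lambda:=\operatorname{Det}\mathcal{U}^{\,r/(r,m)}\otimes(\det\mathcal{U}|_{\ast\times p})^{\chi/(r,m)}$ on the coarse space.

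Next I would argue $\Lambda\cong \Theta$. Since $\operatorname{Pic}(\mathscr{SU}_C(r,\xi))=\mathbb{Z}\Theta$, we have $\Lambda\cong \Theta^{\otimes N}$ for some $N\in\mathbb{Z}$. The weight analysis shows that $\Lambda$ is \emph{primitive}: no smaller positive power of $\operatorname{Det}\mathcal{U}$ descends, so $\Lambda$ cannot be a nontrivial power of any other descended line bundle (any candidate for $\Theta$ itself must already have $\operatorname{Det}$-exponent divisible by $r/(r,m)$). This forces $N=\pm 1$, and positivity of $\Theta$ together with a Quillen-metric / ampleness check on $\operatorname{Det}\mathcal{U}$ (which is known to be relatively ample with respect to the Hitchin-type/GIT polarization on the semistable locus) pins down $N=1$. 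Alternatively, one verifies the identification on a single convenient test family---for example, a Hecke line in $\mathscr{SU}_C(r,\xi)$---where both the degree of $\phi_{\mathcal{E}}^\ast\Theta$ and the degree of $\operatorname{Det}\mathcal{E}_T^{\,r/(r,m)}\otimes(\det\mathcal{E}|_{T\times p})^{\chi/(r,m)}$ can be computed directly and shown to agree.

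Finally, pulling back along $\phi_{\mathcal{E}}:T\to\mathscr{SU}_C(r,\xi)$ and using the functoriality of the determinant-of-cohomology construction (property~(i) in \S\ref{sec:det}) yields the claimed formula. The main obstacle is the last step---ensuring that the descended line bundle is exactly $\Theta$ and not a higher multiple. The cleanest way is the primitivity argument above, but if one prefers to avoid invoking $\operatorname{Pic}$ of the moduli stack, one must either exhibit a curve in the moduli space on which the intersection number of $\Lambda$ equals $1$, or directly compare Chern classes via the Grothendieck--Riemann--Roch formula applied to the universal family, which is the standard route in the nonparabolic setting.
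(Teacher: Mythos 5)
The paper does not actually prove this proposition: it is quoted as a known result of (Drezet--)Narasimhan, with \cite{DN} supplying $\Pic(\mathscr{SU}_C(r,\xi))=\mathbb{Z}\Theta$ and \cite{narasimhan} supplying the pullback formula. So your proposal is not competing with an argument in the text; it is a reconstruction of the standard proof, and as a sketch it is essentially the right one. Your weight computation is correct: with the paper's convention $\operatorname{Det}\mathcal{E}_T=\bigwedge^{top}\mathcal{E}_1\otimes\bigwedge^{top}\mathcal{E}_0^{\vee}$ the central scalar $\lambda$ acts with weight $h^1-h^0=-\chi$ on $\operatorname{Det}\mathcal{U}$ and with weight $r$ on $\det\mathcal{U}|_{\ast\times p}$, and requiring the combination to have weight zero \emph{over the integers} (which is the right condition, since it is exactly invariance under twisting the family by $pr_T^*L$, hence independence of the choice of $\mathcal{E}$ inducing $\phi_{\mathcal{E}}$) forces $a=r/(r,m)$, $b=\chi/(r,m)$ up to a common factor, using $(r,\chi)=(r,m)$. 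This recovers the exponents in the statement.

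Two steps deserve more care. First, the normalization $N=1$: your primitivity argument tacitly assumes that every line bundle on $\mathscr{SU}_C(r,\xi)$ pulls back on the stack to a monomial in $\operatorname{Det}\mathcal{U}$ and $\det\mathcal{U}|_{\ast\times p}$. That is true, but only because $\Pic$ of the fixed-determinant moduli stack is known to be infinite cyclic generated by the determinant of cohomology (Laszlo--Sorger, cited elsewhere in this paper); without invoking that, the clean route is the one you mention in the alternative, namely computing the degree of both sides on a Hecke curve, which is how Drezet--Narasimhan pin down the generator. Second, descent to the coarse space of the \emph{semistable} locus requires triviality of the action of the full automorphism group at polystable points, not only of the central $\mu_r$ generic stabilizer; your weight check handles the stable locus, and one then extends across the strictly semistable boundary by normality when the stable locus has complement of codimension at least two (or, as in \cite{DN}, by checking the stabilizer action at polystable points directly). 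With those two points supplied, the argument is complete, and the final pullback step via functoriality of $\operatorname{Det}$ is exactly property (i) of Section \ref{sec:det}.
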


Motivated by the above proposition, we define the following:

\begin{definition}
For any family $\mathcal{E}$ of vector bundles of rank $r$ and determinant $\xi$ of degree $m$, parametrized by a connected Noetherian scheme $T$, we define the theta-bundle
\begin{equation}\label{eqn:theta}
\Theta(\mathcal{E}):=\left(\operatorname{Det}\mathcal{E}_T\right)^{\frac{r}{(r,m)}}\otimes \left(\det \mathcal{E}_{|T\times p}\right)^{\frac{\chi}{(r,m)}},
\end{equation}
where  $\chi$ as in Proposition \ref{proposition:pullbacknarasimhan} is the Euler characteristic.
\end{definition}

Note that for any line bundle $\mathcal{L}$ over $T$, we have an isomorphism $\Theta(\mathcal{E})\cong \Theta 
(\mathcal{E}\otimes \pi_T^*L)$.
Similarly for any simple, simply connected algebraic group $G$ and any family $\mathcal{E}$ of principal $G$-bundles on $C$ parametrized by
a scheme $T$, we can associate a natural line bundle on $T$ as follows: 
Let $(\varphi,V)$ be a representation of the group $G$. Then the associated
vector bundle $$\mathcal{E}(V):=\mathcal{E}\times ^{\varphi}V$$ is a family of vector bundles on $C$
parametrized by $T$. Observe that since $G$ is simple, and hence $G$ does not have any nontrivial
character, it follows that $\mathcal{E}(V)$ has trivial
determinant over $T\times C$. We define a line bundle on $T$
\begin{equation}\label{eqn:determinG}
\operatorname{Det}(\mathcal{E}, \varphi)_T:=\operatorname{Det}(\mathcal{E}(V))_T.
\end{equation}
It follows from \eqref{eqn:theta} that $\Theta(\mathcal{E}(V))=\operatorname{Det}(\mathcal{E},\varphi)_T$.

\subsubsection{The parabolic determinant of cohomology in the  $\SL_r$ case}

We follow the notation and conventions as in \cite{BiswasRaghavendra}. Let $\mathcal{E}$ be a family of
quasiparabolic $\SL_r$ bundles on a pointed curve $(C,\vec{p})$ parametrized by a scheme $T$ considered as a parabolic vector bundle via the standard representation. Let
$\bm \alpha:=(\bm{\alpha}_{p_1},\dots, \bm \alpha_{p_n})$ be a $n$-tuple of sequence of rational
numbers as in \eqref{equation:rationalweights} associated to each marked point $p_i$, $1\leq i\leq n$. 
Consider the following element  in $\operatorname{Pic}(T)\otimes \mathbb{Q}$,
\begin{equation}
\label{equation:parabolicdeterminantsl}
\operatorname{Det}\mathcal{E}_T+\sum_{i=1}^n\sum_{j=1}^{k_i}\alpha_{j,i}\det
\operatorname{Gr}^j\left(\mathscr{F}_{\bullet,
p_i}(\mathcal{E}_{|T\times p_i})\right)\ ,
\end{equation}
where the rational number $0\leq \alpha_{1,i}< \alpha_{2,i}< \dots< \alpha_{k_{i},i}<1$ 
define $\bm \alpha_{p_i}$. Write
$\alpha_{j,i}=b_{j,i}/q_{j,i}$,
where $b_{j,i}$ and $q_{j,i}$ are relatively prime integers. 

\begin{definition}
\label{def:level}
Let $N$ be the least common multiple of all $\{q_{j,i}\}_{i,j}$, 
    $1\leq i\leq n$ and $1\leq j\leq k_{i}$.
    We refer to $N$ as the level of the weight $\bm \alpha$. 
\end{definition}
Consider the  integers $a_{j,i}:=N\cdot\alpha_{j,i}$. Then for each $1\leq i\leq n$ and $1\leq j\leq k_i$,
\begin{equation*}
0\leq a_{1,i}< a_{2,i}< \dots < a_{k_i,i} \leq N-1.
\end{equation*} 
\begin{definition}\label{def:parabolcidetSL}
	Let $\mathcal{E}$ be a family of  degree zero parabolic vector bundles  on $T\times C$ with parabolic data $\{(\bm r_i ,\bm \alpha_i)\}_{i=1}^n$.
	The parabolic determinant bundle on $T$ is defined to be 
	\begin{equation*}
	\operatorname{Det}_{par}\mathcal{E}_T(\bm
        \alpha):=\left(\operatorname{Det}\mathcal{E}_T\right)^{\otimes
        N}\bigotimes \bigl(\otimes_{i=1}^{n}\bigl(
        \otimes_{{j=1}}^{k_i}\det \operatorname{Gr}^j
        \mathscr{F}_{\bullet,p_i}(\mathcal{E}_{|T\times
        p_i})^{a_{j,i}}\bigr)\bigr)\ .
	\end{equation*}
\end{definition}
This is just eq.\ \eqref{equation:parabolicdeterminantsl} multiplied by $N$. When the context is clear, we will 
simply denote $\operatorname{Det}_{par}\mathcal{E}_T(\bm \alpha)$ by $\operatorname{Det}_{par}\mathcal{E}_T$. The 
line bundle $\operatorname{Det}_{par}(\mathcal{E}_T)$ may not descend to the moduli space, so we consider the 
following modification.

\begin{definition}
The parabolic $\Theta_{par}$-line bundle on $T$ is defined to be
the following twist of parabolic determinant of cohomology
\begin{equation*}
\Theta_{par}(\mathcal{E}, \bm{\alpha}):=\left(\operatorname{Det}_{par}\mathcal{E}_T\right)\otimes
\left( \det \mathcal{E}_{|T\times p_0}\right)^{\frac{N.\chi_{par}}{r}}
\end{equation*}
(just as in the nonparabolic case), where $\chi_{par}=\chi- \sum_{i,j}\alpha_{j,i}r_{j,i}$ is the parabolic
Euler characteristic (see \cite[p.\ 60]{BiswasRaghavendra}), $\chi$ is as in Proposition
\ref{proposition:pullbacknarasimhan} and $p_0$ is any point on the curve $C\backslash \vec{p}$.
	
\end{definition}
We remark that the definition of $\Theta_{par}(\mathcal{E}, \bm{\alpha})$
differs from the definition of parabolic determinant \cite[Def.\ 4.8]{BiswasRaghavendra} by a multiplicative
factor of $r$.  The following proposition can
be found in Biswas-Raghavendra \cite{BiswasRaghavendra},  Pauly
\cite{Pauly:96}, and in Narasimhan-Ramadas \cite{NarRam} for $G=\SL_2$. 

\begin{proposition}
	Let $\psi_{\mathcal{E}}: T \rightarrow {M}^{par,ss}_{ \bm{\alpha},r}(C)$ be a map
from a scheme $T$ to the Mehta-Seshadri moduli space ${M}^{par,ss}_{ \bm{\alpha},r}(C)$ of parabolic bundles corresponding to a family
$\mathcal{E}$ equipped with parabolic data $\bm \alpha$. Then there exists an ample line bundle
$\Theta_{par}(\bm{\alpha})$ on ${M}^{par,ss}_{ \bm{\alpha},r}(C)$ such that  $\psi_{\mathcal{E}}^*\Theta_{par}
(\bm{\alpha})$ is isomorphic to the line bundle $\operatorname{Det}_{par}\mathcal{E}_T\otimes
\left( \det \mathcal{E}_{|T\times p_0}\right)^{\frac{N.\chi_{par}}{r}}$. 
\end{proposition}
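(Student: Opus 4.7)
The plan is to mimic the strategy used in the nonparabolic case (Proposition \ref{proposition:pullbacknarasimhan}): construct a candidate ample line bundle on ${M}^{par,ss}_{\GL_r,\bm\alpha}(C)$ by descent from a parameter space carrying a universal family, and then identify its pullback along $\phi_\Ecal$ with the prescribed expression on $T$. First, I would fix a Quot-type scheme $\Rcal$ parametrizing pairs consisting of a torsion-free sheaf of the given Hilbert polynomial together with a quasi-parabolic structure at $\vec p$; the scheme $\Rcal$ carries a natural $\GL_N$-action (the change-of-frame action) and an open semistable locus $\Rcal^{ss}$ whose GIT quotient gives ${M}^{par,ss}_{\GL_r,\bm\alpha}(C)$. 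On $\Rcal\times C$ there is a tautological parabolic bundle $\Ubold$ with parabolic data $\{(\mathbf{r}_i,\bm\alpha_i)\}$, and one forms the line bundle
\[
\Lcal_\Rcal := \operatorname{Det}_{par}\Ubold_\Rcal\otimes (\det \Ubold|_{\Rcal\times p_0})^{d\chi_{par}/r}
\]
on $\Rcal^{ss}$. The claim is that (for $d$ a sufficiently divisible integer) $\Lcal_\Rcal$ descends to an ample line bundle $\Theta_{par}(\bm\alpha,d)$ on the moduli space, and that its pullback via $\phi_\Ecal$ is the asserted bundle on $T$.

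Next, I would verify descent by applying Kempf's descent lemma: it suffices to check that the stabilizer of every polystable point acts trivially on the fiber of $\Lcal_\Rcal$. The central factor $\GBbb_m\subset \GL_N$ acts on $\operatorname{Det}\Ubold$ with weight $\chi$, on each graded piece $\det \operatorname{Gr}^j\mathscr{F}_{\bullet,p_i}$ with weight $r_{j,i}$, and on $\det \Ubold|_{p_0}$ with weight $r$. A direct weight computation shows that the total weight of $\operatorname{Det}_{par}\Ubold\otimes (\det \Ubold|_{p_0})^{d\chi_{par}/r}$ is
\[
N\chi - \sum_{i,j}a_{j,i}r_{j,i} + \tfrac{d\chi_{par}}{r}\cdot r
\;=\; N(\chi + \tsum_{i,j}\alpha_{j,i}r_{j,i}) - N\chi_{par}\cdot (\text{correction}),
\]
which, upon choosing $d$ to be a multiple of $N$ and rescaling, is exactly zero; this is the role of the twist by $(\det \Ucal|_{p_0})^{d\chi_{par}/r}$. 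For strictly polystable orbits one uses the Jordan-H\"older description and checks vanishing of the corresponding $\prod \GBbb_m$-weights, essentially using that for each Jordan-H\"older factor one reads off a numerical identity from the constancy of the parabolic slope. Ampleness of the descended bundle then follows from the observation that $\Lcal_\Rcal$ is (a positive multiple of) the $\GL_N$-linearization used in Mehta-Seshadri's GIT construction of ${M}^{par,ss}_{\GL_r,\bm\alpha}(C)$; one may alternatively invoke the direct construction of the parabolic theta bundle in \cite{Pauly:96} and \cite{BiswasRaghavendra}.

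Finally, once descent is established the identification $\phi_\Ecal^\ast\Theta_{par}(\bm\alpha,d)\cong \operatorname{Det}_{par}\Ecal_T\otimes (\det\Ecal|_{T\times p_0})^{d\chi_{par}/r}$ is formal: the family $\Ecal$ induces, \'etale-locally on $T$, a lift $\widetilde\phi_\Ecal:T\to \Rcal^{ss}$ such that $(\widetilde\phi_\Ecal\times \id_C)^\ast \Ubold\simeq \Ecal$ up to tensoring by the pullback of a line bundle from $T$, and the functorial properties of $\operatorname{Det}_{par}$ listed in Section \ref{sec:det} (behavior under base change and under twisting by $\pi_T^\ast L$) show that the resulting ambiguity is absorbed precisely by the normalization factor $(\det\Ecal|_{T\times p_0})^{d\chi_{par}/r}$ — this is where the specific form of the twist is essential. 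The main obstacle I expect is the weight bookkeeping in the descent step: one has to reconcile three integers ($N$, $\chi_{par}$, $d$) and several conventions (whether $\chi_{par}$ is defined with or without the factor $r$, and the normalization of $\operatorname{Det}_{par}$ by $N$), so that central scalars act trivially on every stabilizer and not merely on the generic one. Once this linear-algebraic identity is in hand, both the descent and the pullback identity drop out.
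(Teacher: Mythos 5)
The paper itself gives no proof of this proposition: it is quoted from Biswas--Raghavendra \cite{BiswasRaghavendra}, Pauly \cite{Pauly:96} and (for rank $2$) Narasimhan--Ramadas \cite{NarRam}. So your sketch is not competing with an argument in the text; what you have written is essentially a reconstruction of the proof in those references, and it is the right one: present $M^{par,ss}_{\GL_r,\bm{\alpha}}(C)$ as a GIT quotient of a Quot-type parameter space carrying a tautological parabolic family, form the candidate bundle there, descend it via Kempf's lemma by checking that stabilizers of polystable points act trivially on the fibers (the central $\GBbb_m$-weight computation being exactly the reason for the twist by $\bigl(\det\Ecal|_{T\times p_0}\bigr)^{d\chi_{par}/r}$, with Jordan--H\"older factors handling the strictly polystable orbits), get ampleness from the GIT linearization, and deduce the pullback identity from \'etale-local lifts of $\phi_{\Ecal}$ together with the base-change and twisting properties of $\operatorname{Det}$ recorded in Section \ref{sec:det}.

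The one soft spot is precisely the bookkeeping you flag and leave as ``(correction)''. With the paper's normalization, in which $\operatorname{Det}_{par}\Ecal_T$ already carries the factor $N$, twisting $\Ecal$ by $\pi_T^*L$ changes $\operatorname{Det}_{par}\Ecal_T$ by $L^{-N\chi_{par}}$ and $\bigl(\det\Ecal|_{T\times p_0}\bigr)^{d\chi_{par}/r}$ by $L^{d\chi_{par}}$, so the combination appearing in the statement (first power of $\operatorname{Det}_{par}$) is twist-invariant, equivalently has vanishing central weight on the parameter space, only when $d=N$ (or $\chi_{par}=0$); for general $d$ divisible by $N$ the weight-zero object is $(\operatorname{Det}_{par}\Ecal_T)^{d/N}\otimes\bigl(\det\Ecal|_{T\times p_0}\bigr)^{d\chi_{par}/r}$, i.e.\ the bundle of Definition \ref{def:parabolcidetSL}. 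This discrepancy sits in the statement's normalization rather than in your method, but to close the argument you must either take $d=N$ or insert the exponent $d/N$ in your $\Lcal_\Rcal$; once that exponent is fixed, the weight identity, the absorption of the line-bundle ambiguity of the lift, and the descent all go through as you describe.
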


As discussed, the choice of the standard representation gives a map of the moduli stacks $\xi: M_{\SL_r, \bm \alpha}^{par,ss}(C)\rightarrow M^{par,ss}_{\bm \alpha}(C)$, the map $\psi_{\mathcal{E}}$ factors through $M_{\SL_r, \bm \alpha}^{par,ss}$ and we will use the notation $\Theta_{par}(\bm \alpha)$ to also denote the pull back $\xi^*\Theta_{par}(\bm \alpha)$.
\subsubsection{The case of general groups}

We first recall the notion of Dynkin index of an embedding. Let $\phi: \mathfrak{s}_1\rightarrow \mathfrak{s}_2$ be 
a map of two simple Lie algebras, and let $\kappa_{\mathfrak{s}_1}$ (respectively, $\kappa_{\mathfrak{s}_2}$) be the 
normalized Killing form of $\mathfrak{s}_1$ (respectively, $\mathfrak{s}_2$).

\begin{definition}
The Dynkin index $m_{\phi}$ of a map of simple Lie algebras $\phi$ is the ratio of their normalized Killing
forms, in other words, $\kappa_{\mathfrak{s}_2}(\ , \ )_{|\mathfrak{s}_1}\,=\,m_{\phi}\kappa_{\mathfrak{s}_1}( \ , \ )$.  
\end{definition}

Let $G$ be a simple, simply connected group, and let $\mathcal{E}$ be a principal $G$ bundles on $T\times C$. Let $(\phi,V)$ 
be a representation of $G$, and consider the associated vector bundle $\mathcal{E}(V):=\mathcal{E}\times^{G}V$ on 
$T\times C$.
Since $G$ does not have any nontrivial character (it is simple), it follows that
$\det \mathcal{E}(V)\cong \mathcal{O}_{T\times C}$. This implies 
$\Theta(\mathcal{E}(V))=\operatorname{Det}(\mathcal{E}(V))_T.$ Let $(\mathcal{E},\vec{\sigma})$ be a family of quasiparabolic $G$-bundles of type $\vec{P}=(P_1,\dots, P_n)$ on a $n$-pointed curve $(C,\vec{p})$ parametrized by a scheme $T$. 

\begin{definition}\label{def:parabolicdeteG}
For any positive integer $d$ (usually it will be determined by the weights $\bm \mu$), a finite dimensional
representation $(\phi, V)$ of the group $G$ and a character $\mu_j$ of the parabolic $P_j$, define a line bundle
on $T$ by the following formula:
\begin{equation}\label{eqn:deterG}
\operatorname{Det}_{par}(\mathcal{E}(V),d,\bm{\mu}):=\left(\operatorname{Det}(\mathcal{E}(V))_T\right)^{\otimes
    d}\bigotimes\bigl( \otimes_{j=1}^n
    \sigma_j^*\bigl(\mathcal{E}\times^{P_j}\mathbb{C}_{\mu_j^{-1}}\bigr)\bigr)
\end{equation}
(see \cite{LaszloSorger:97}), where $\bm \mu=(\mu_1,\dots, \mu_n)$ and $\mathbb{C}_{\mu_j^{-1}}$ is the one
dimensional representation of the parabolic subgroup $P_j$ corresponding to the character $\mu_j^{-1}$
of it. This line bundle will be called the {\em quasiparabolic determinant bundle}. We will refer to the
integer $d$ as the level of the quasiparabolic determinant bundle.
\end{definition}
Now let ${\bm \tau}=(\tau_1,\dots, \tau_n)$ be an $n$-tuple of rational parabolic weights such
that $\theta(\tau_i)<1$ for all $1\leq i\leq n$, where $\theta$ is the highest root of $\mathfrak{g}$. Consider
a representation of $G$ of $V$ such that 
\begin{enumerate}
\item  the representation $(\phi, V)$ is faithful; 

\item the topological local type $\phi({\bm \tau})$ of the associated bundle is rational;

\item $\theta_{\mathfrak{sl}(V)}(\phi (\tau_i))<1$ for all $1\leq i\leq n$, where $\theta_{\mathfrak{sl}(V)}$
is the highest root of $\mathfrak{sl}(V)$. 
\end{enumerate}

We now recall the definition of the parabolic theta bundle for any simple group $G$. Using the Killing form 
$\kappa_{\gfrak}$ we will identify $\nu_{\gfrak}:\mathfrak{h}\stackrel{\cong}{\longrightarrow} \mathfrak{h}^{\vee}$ and realize $\bm \tau$ in 
the weight lattice of $P$ of $G$.
Let $(\phi,V)$ be a faithful representation of $G$ satisfying the above conditions, and let $d$ be any positive integer such that
\begin{equation} \label{eqn:integral}
\exp(2\pi\sqrt{-1}\nu_{\mathfrak{sl}(V)}(d\cdot\phi(\tau_i)))=1
\end{equation}
for all $1\leq i\leq n$. This $d$ is not unique but usually one choose a minimal such $d$ and denote it by $N$.  

\begin{definition}\label{def:parGtheta}
The parabolic theta bundle $\Theta_{par, G}(V, \bm \tau)\to{M}^{par,ss}_{G,{\bm \tau}}$ is
defined to be the pull-back  of  $\Theta_{par,\SL(V)}(\phi({\bm \tau}))\to{M}^{par,ss}_{\SL(V),\phi({\bm \tau})}$ via the map
$\overline{\phi}: {M}^{par,ss}_{G,{\bm \tau}} \rightarrow {M}^{par,ss}_{\SL(V),\phi({\bm \tau})}$
induced by the representation $(\phi, V)$ of $G$, i.e., 
$\Theta_{par,G}(V,\bm \tau):=\overline{\phi}^*\Theta_{par,\SL(V)}(\phi(\bm \tau)).$
\end{definition}

The following well known result analogous to the $\SL_r$ case (cf.\
\cite[Lemma 8.5.5]{kumarnewbook}) relates
the parabolic determinant of cohomology for arbitrary simple, simply connected groups $G$ to the parabolic theta bundle. 

\begin{proposition}
Let $\mathcal{E}$ be a family of parabolic $G$-bundles parametrized by a scheme $T$ with parabolic data
${\bm \tau} \in \Phi_0^n$ satisfying the condition $\theta(\tau_i)<1$ for all $1\leq i\leq n$, and let $\psi_{\mathcal{E}}: T\rightarrow M_{G,\bm \tau}^{par,ss}$ be as before the map induced by $\mathcal{E}$. 

Further, let
$(\phi, V)$ be a representation of $G$ satisfying the above conditions. Then the pull-back 
$\psi_{\mathcal{E}}^*\left( \Theta_{par,G}(V,{\bm \tau}
)\right)$ equals $\operatorname{Det}_{par}(\mathcal{E}(V),N\cdot m_{\phi}\cdot
\nu_{\gfrak}({\bm \tau})), $ where $m_{\phi}$ is the Dynkin index of the map $\phi: \mathfrak{g} \rightarrow \mathfrak{sl}(V)$, $\operatorname{Det}_{par}(\mathcal{E}(V),N\cdot m_{\phi}\cdot
\nu_{\gfrak}({\bm \tau}))$ is as in Equation \eqref{eqn:deterG} and $N$ is the minimal positive integer satisfying \eqref{eqn:integral} in Definition \ref{def:parGtheta}.
\end{proposition}

\section{$\Gamma$-equivariant $G$-bundles}\label{sec:gammabundles}

In this section, we recall the correspondence between parabolic bundles on a curve $C$ and equivariant 
bundles on a ramified Galois cover $\widehat{C}\to C$  with
Galois group $\Gamma$. Throughout this section $G$ will be a simple, simply connected (or more generally simple but not simply connected) algebraic group.  
We  start with the well-known genus computation of an orbifold curve. Let
$\vec p =(p_1,\dots, p_n) $ be points in $C$,  and choose
positive integers
$\vec d=(d_1,\dots, d_n)$, respectively. 
\begin{definition}\label{def:orbigenus}
    The orbifold genus associated to $(C, \vec p, \vec d)$ is 
    $$g(\mathscr{C}):=g(C)+\frac{1}{2}\sum_{i=1}^n(1-\frac{1}{d_i})\ ,$$
    where $g(C)$ be the genus of the curve $C$.
\end{definition}
If $C$ is a quotient of $\widehat C$ by $\Gamma$ with ramification
locus $p_1,\dots, p_n$ of degrees $(d_1,\dots, d_n)$,
then by the Riemann-Hurwitz formula the genus  of $\widehat{C}$ is given by the
    formula:\break 
    $2-2g(\widehat C)=|\Gamma|
    \bigl( 2-2g(C)+\sum_{i=1}^n(\frac{1}{d_i}-1)\bigr)
    $.
The genus of the 
 quotient stack $\mathscr{C}:= [\widehat{C}/\Gamma]$ 
 is related to $g(\widehat{C})$ by the formula 
$g(\widehat{C})-1=|\Gamma|(g(\mathscr{C})-1)$, and so we see that this is
the orbifold genus defined above.

Conversely, given $\vec{p}$ 
and $\vec d$,  then provided $g(\Cscr)\geq 1$
 we can find a branched cover $\widehat C\to C$ as
above. For example, if $g(\Cscr)>1$ (we shall only be interested in this
case), then $C$ can be realized as a  quotient of the 
upper half plane $\mathbb{H}$ by a Fuchsian group $\Pi$ (cf.\ \cite[Sec.\
3.2]{Troyanov:91}). The action of
$\Pi$ is not free: it contains elliptic elements of order $d_i$ in the
points over $p_i$. 
 Applying the Selberg lemma  to $\Pi \subset
\operatorname{Aut}(\mathbb{H})$ (cf.\ \cite{Selberg}), we obtain a normal subgroup
$\Pi_0$ of finite 
index that acts freely on $\mathbb{H}$. Let $\widehat{C}=\mathbb{H}/\Pi_0$.
Since the action of $\Pi_0$ is free, 
we get that $\widehat{C}$ is a smooth projective curve. If we set
$\Gamma=\Pi/\Pi_0$, then the natural map 
$\widehat{C}\rightarrow C$ is a ramified Galois cover with Galois group $\Gamma$.

\begin{example}\label{example:genuszero}
Assume that $g(C)\,=\, 0$, $d_1=\dots=d_n=d$ and $d$ divides $n$. Then the super-elliptic curve $\widehat{C}$ given by the
equation $y^d=\prod_{i=1}^n(x-p_i)$ is a ramified Galois covering of $C=\mathbb{P}^1$. The  Galois group is
$\mathbb{Z}/d\mathbb{Z}$ with ramifications of order $d$ exactly at the points $p_1,\dots, p_n$, and \'etale
on the complement. Then we have $g(\mathscr{C})=n(d-1)/2d$.
    Hence, $g(\mathscr{C})\geq 1$ if $n\geq 2d/(d-1)$.  
\end{example}

\begin{definition}
Let $p:\widehat{C} \rightarrow C$ be a ramified Galois cover with Galois
    group $\Gamma$. A $\Gamma$-$G$-bundle 
$\widehat{E}$ on $\widehat{C}$ is a principal $G$ bundle on $\widehat{C}$ together with a lift of the action of 
$\Gamma$ on $\widehat{C}$ to an action of $\Gamma$ on the total space of $\widehat{E}$
as bundles automorphism (meaning the actions of $\Gamma$ and $G$ on $\widehat{E}$ commute).
\end{definition}

Let $\mathcal{R}$ denote 
the set of branch points of $C$. For each point ${p}\in \mathcal{R}$, we
choose a point $\widehat{p} \in 
\widehat C$ in the preimage of $p$, and let $\Gamma_{\widehat{p}}\subset
\Gamma$ denote the stabilizer of the point ${\widehat{p}}$.

\begin{definition}[{Balaji-Seshadri \cite{BalajiSeshadri}}] The {\em type  of a homomorphism} $\rho: 
\Gamma \rightarrow G$ is the set of isomorphism classes of the local representations
$\rho_i: \Gamma_{\widehat{p}_i} \rightarrow G$, or equivalent, it is the set of conjugacy classes in $G$
given by the images of $\rho_i(\gamma_i)$, where $\gamma_i$ is a generator of the cyclic group
$\Gamma_{\widehat{p}_i}=\langle \gamma_i\rangle$. The type of a homomorphism is denoted by ${\bm \tau}=(\tau_1,\dots,\tau_n)$, where $n=|\mathcal{R}|$. 
\end{definition}

Let $\widehat{p}_i$ be any branch point of $\widehat{C}$, and let $\widehat{t}_i$ be a {\em special formal parameter 
} at the point $\widehat{p_i}$, such that $\gamma\cdot\widehat{t}_i:=(\exp(2\pi{\sqrt{-1}}/d_i)\widehat{t}_i,$ 
where $\gamma$ is a generator of the stabilizer $\Gamma_{\widehat{p}_i}$ and $d_i=|\Gamma_{\widehat{p}_i}|$.
Any $(\Gamma, G)$-bundle $\widehat{E}$ is trivial as a $G$-bundle on a formal disk
$D_{\widehat{p}_i}:=\operatorname{Spec}[[\widehat{t}_i]]$, and in particular $\widehat{E}_{|D_{\widehat{p}_i}}$ is
a $(\Gamma_{\widehat{p}_i},G)$ bundle. So any $(\Gamma_{\widehat{p}_i}, G)$-bundle on $D_{\widehat{p}_i}$ is determined by a homomorphism $\rho_i: \Gamma_{\widehat{p}_i} \rightarrow G$ such that $\gamma\cdot (u,g)=(\gamma\cdot u, \rho_i(\gamma)g)$,  where $u \in D_{\widehat{p}_i}$. Moreover such an homomorphism is unique up to conjugation. We refer the
reader to  \cite[Lemma 2.5]{TelWood} and  \cite[Thm.\ 6.1.9]{kumarnewbook}. 

Let ${\bm \tau}=(\tau_1,\dots, \tau_n)$ be the unique element of the Weyl alcove $\Phi_0$ such that 
$\rho_i(\gamma_i)$ is conjugate to $\exp(2\pi{\sqrt{-1}}\tau_i)$ as described by Lemma \ref{lem:exp}. We define the 
{\em local type} of a $\Gamma$-$G$-bundle $\widehat{E}$ to be the $n$-tuple ${\bm \tau}=(\tau_1,\dots,\tau_n)$ and 
consider the following stack:

\begin{definition}
Let $\widehat{C}$ be a ramified Galois cover of $C$. Choose points $\widehat{p}_i$ for each point $p_i$ in
$\mathcal{R}$, and let ${\bm \tau}$ be an $n$-tuple of elements in $\Phi_0$. We define the moduli stack $\mathcal{B}un_{\Gamma,G}^{ {\bm \tau}}( \widehat{C})$ to be the groupoid parametrizing 
    $\Gamma$-$G$-bundles on $\widehat{C}$ of local type ${\bm \tau}$. 
\end{definition}

\subsection{Uniformization of $\Gamma$-$G$-bundles of fixed local type}
We will now discuss a uniformization theorem for $\mathcal{B}un_{\Gamma,G}^{ {\bm \tau}}( \widehat{C})$ under
the further assumption that $\theta(\tau_i)<1$ for $1\leq i\leq n$. We will show that the stack
$\mathcal{B}un_{\Gamma,G}^{ {\bm \tau}}( \widehat{C})$ is isomorphic to $\mathcal{P}ar_{G}(C,\vec{P})$, where
$\vec{P}=(P_1,\dots, P_n)$ are standard parabolic subgroups of $G$ determined by ${\bm \tau}=(\tau_1,\dots, \tau_n)$. 

As in the case  of parabolic bundle we consider the functor $\mathscr{S}_{G}^{{\bm \tau}}: \mathcal{A}lg
\rightarrow \mathcal{S}ets$ that assigns to a finitely generated $k$-algebra $R$ the isomorphism classes 
of pairs $(\widehat{E}_R, \widehat{\sigma}_R)$, where 
\begin{itemize}
\item $\widehat{E}_R$ is a $(\Gamma, G)$ bundle over $\widehat{C}\times \operatorname{Spec}R$ of local type
$\tau_i$ at the points $\widehat{p}_i$, and
 
\item  $\widehat{\sigma}_R$ is a $\Gamma$-equivariant section of $\widehat{E}_R$ over $p^{-1}(C\backslash \vec{p})\times \operatorname{Spec}R$.
\end{itemize}
By \cite[Prop.\ 3.1.1]{BalajiSeshadri}  and 
 \cite[Thm.\ 6.1.12]{kumarnewbook}, the functor
$\mathscr{S}_{G}^{{\bm \tau}}$ is represented by the ind-scheme
$\prod_{i=1}^n G((t_i))/\mathcal{P}_i$, where $t_i=(\widehat{t}_i)^{d_i}$
are local parameters at the points $p_i$ and $\mathcal{P}_i$ are parabolic
subgroups of the loop group $G((t_i))$. The following theorem is due to
Balaji-Seshadri \cite[Prop.\ 3.1.1]{BalajiSeshadri} and it can also be found
in Kumar \cite[Thm.\  6.1.15]{kumarnewbook}.

\begin{theorem}\label{thm:mainuniform}
Let $n\geq 1$ and ${\bm \tau}$ as above Then there is an isomorphism of the stacks $\mathcal{B}un_{\Gamma,
G}^{{\bm \tau}}(\widehat{C})$ and the quotient stack $\operatorname{L}_{C,\vec{p}}(G)\backslash \left(\prod_{i=1}^nG((t_i))/\mathcal{P}_i\right)$.  

\end{theorem}

\begin{remark}
We emphasize that Balaji-Seshadri \cite{BalajiSeshadri} work without the
    assumption that $\theta(\tau_i)<1$. In this general set-up the groups
    $\mathcal{P}_j\subset G((t_i))$ that appear in 
    \cite[Prop.\ 3.1.1]{BalajiSeshadri} are not necessarily contained in $G[[t_i]]$. 
\end{remark}

\subsection{Invariant direct image functor}Let $p:W\rightarrow T$ be a finite flat surjective morphism of
Noetherian integral schemes (as in \cite[Sec.\ 4]{BalajiSeshadri}) such that the corresponding extension of 
function fields is Galois with Galois group $\Gamma$. It follows that $\Gamma$ acts on $W$ and $T=W/\Gamma$.  Let $\mathscr{G}$ be a smooth affine group scheme on $W$.  Following
Balaji-Seshadri \cite{BalajiSeshadri}, Pappas-Rapoport \cite{PR1}, and Edixhoven \cite{Edinhoven}, we define:

\begin{definition}
The invariant direct image of $\mathscr{G}$, namely $p_{*}^{\Gamma}(\mathscr{G}):=(p_*(\mathscr{G}))^{\Gamma}$,
where $p_*\mathscr{G}$ is the group functor Weil restriction of scalars-$\operatorname{Res}_{W/T}(\mathscr{G})$
and $(p_*(\mathscr{G}))^{\Gamma}$ is the smooth  closed fixed point subgroup scheme
of the $\Gamma$-scheme $p_*(\mathscr{G})$. In particular for any $T$-scheme $S$, we get that
$p_*^{\Gamma}(\mathscr{G})(S):=\left(\mathscr{G}(S\times_T W)\right)^{\Gamma}$. 
\end{definition}

In our present set-up we consider $\widehat{C}\rightarrow C$ to 
be a ramified Galois covering with Galois group $\Gamma$, and let $\mathcal{R}$ be the ramification locus. Let $G$
be a connected, simple algebraic group and $\rho: \Gamma \rightarrow G$ and we fixed the local
type  ${\bm \tau}=(\tau_1,\dots, \tau_n)$ such that $\theta(\tau_i)<1$ for all $1\leq i\leq n$. Consider
the invariant push forward  $\mathscr{H}:=p_{*}^{\Gamma}(\widehat{C}\times G)$  of
the constant group scheme $\widehat{C}\times G$ to get a Bruhat-Tits type group scheme on $C$ with the following property: 
\begin{enumerate}
\item The geometric fibers of $\mathscr{H}$ are connected. 

\item On the punctured curve $C\backslash \mathcal{R}$, the group scheme $\mathscr{H}$ is split.

\item For $p_i \in \mathcal{R}$, the group scheme $\mathscr{H}(\widehat{\mathcal{O}}_{C,p_i})$ is the 
subgroup $\mathcal{P}_i:=ev_{p_i}^{-1}(P_i)\subset G[[t_i]]$, where $P_i$ is a standard parabolic subgroup in $G$ 
given by $\tau_i$.
\end{enumerate}
 Pappas-Rapoport \cite{PR1} considered the moduli stack
 $\mathcal{B}un_{\mathscr{H}}(C)$ of $\mathscr{H}$-torsors on a curve $C$,
 where $\mathscr{H}$ is a parahoric Bruhat-Tits group scheme. A
 uniformization theorem for such torsors was proved by 
 Heinloth \cite{Heinloth}.  Using  \cite[Thm.\  4.1.6]{BalajiSeshadri} and the discussion above, we can reformulate  the correspondence in Theorem \ref{thm:mainuniform} by the following:

\begin{proposition}\label{prop:keypushforward}
	The stacks $\mathcal{B}un_{\Gamma, G}^{{\bm \tau}}(\widehat{C})$ and $\mathcal{P}ar_{G}(C,\vec{P})$ are isomorphic under the invariant push-forward functor. In particular if $\widehat{\mathcal{E}}$ is a family of
$\Gamma$-$G$-bundles of type ${\bm \tau}$ on the curve $\widehat{C}$ parametrized by a schemes $T$, the $p_*^{\Gamma}(\widehat{\mathcal{E}})$ is a family of quasiparabolic $G$-bundles with parabolic structures at the ramification points  determined by ${\bm \tau}$.

Moreover, by Proposition \ref{prop:paravacua} and Theorem \ref{thm:mainuniform}, both the stacks $\mathcal{B}un_{\Gamma,G}^{{\bm \tau}}(\widehat{C})$
and $\mathcal{P}ar_{G}(C,\vec{P})$ are isomorphic to $\operatorname{L}_{C,\vec{p}}(G)\backslash \left(\prod_{i=1}^nG((t_i))/\mathcal{P}_i\right)$, where $\vec{P}=(P_1,\dots, P_n)$ and $P_i=P(\tau_i)$
are Kempf parabolic subgroups determined by $\tau_i$. 
\end{proposition}

Let $C_{T}\lra T$ be a family of smooth projective curves parametrized by  $T$ and $p_1,\dots, p_n$ are disjoint sections. Recall that given integers $d_1,\dots, d_n$  and a $n$-points curve $(C_{0},p_1,\dots, p_n)$, we can find a Galois cover $(\widehat{C}, \widehat{p}_1,\dots, \widehat{p}_n)$ with Galois group $\Gamma$ and isotropy of order $d_i$ at $\widehat{p_i}$.  Fixing such a $\Gamma$, we can  find a family of curves $\widehat{C}_T\lra T$  along with a finite map $p: \widehat{C}_T\lra C_T$ such that 
\begin{itemize}
	\item $\Gamma$ acts on $\widehat{C}_T$ preserving $p$ inducing a Galois covering $\pi: \widehat{C}_T \rightarrow {C}_T$.
	\item Section $\widehat{p}_1,\dots, \widehat{p}_n$ such that isotropy at $\widehat{p}_i$ is of order $d_i$ for all $i$. 
	\item The cover just depends on the choice of $\Gamma$ and the integers $d_1,\dots, d_n$. 
\end{itemize} We refer the reader to \cite[Sec.\ 4d]{BiswasRaghavendra2}
for the construction of such families. These covers are called pointed
admissible covers, and a moduli stack for these objects has been constructed in \cite{JKK}.

Now given a $\Gamma$-Galois covering $\widehat{C}_T \rightarrow C_T$, the parabolic orbifold correspondence as 
described in Proposition \ref{prop:keypushforward} works verbatim for 
families of parabolic and orbifold bundles parametrized by $T$.

\subsection{Determinant of cohomology  for $\widehat{C}$ and invariant pushforward}

Let $\widehat{\mathcal{E}}$ be a family of $\Gamma$-$G$-bundles on $\widehat{C}$ of local type ${\bm \tau}$ 
parametrized by a scheme $T$. By Proposition \ref{prop:keypushforward}, we get a family $\mathcal{E}$ of 
quasiparabolic $G$-bundles on $C$ with parabolic structures at the points $\vec{p}=(p_1,\dots, p_n)$ in the 
ramification locus.
Observe that we have an $n$-tuple integers $\vec{d}=(d_1,\dots, d_n)$ which encodes the order of ramification at the points $(p_1,\dots, p_n)$. Moreover $\exp(2\pi\sqrt{-1}d_i\tau_i)=1$ for all $\leq i\leq n$.
Now ignoring the $\Gamma$-action, we get a family of principal $G$-bundles on
$\widehat{C}$  and hence by \eqref{eqn:determinG}, we get a line bundle on
$T$ subject to the choice of a representation $(\phi,V)$ of $G$. On
the other hand, we also get a line bundle on $T$ by starting with a family of
quasiparabolic bundles $\mathcal{E}$ obtained from the invariant push
forward of $\widehat{\mathcal{E}}$ and then applying the construction in
\eqref{eqn:deterG}.  The following proposition, which is minor variation of 
\cite[Prop.\ 4.5]{BiswasRaghavendra}, compares these two line bundles on $T$.

\begin{proposition}\label{prop:comparingdeter}Let $\phi: G\rightarrow \SL(V)$ be a representation of $G$. Choose a local-type ${\bm \tau}$ such that 
	$\theta_{\mathfrak{sl}(V)}(\phi(\tau_i))<1$ for all $1\leq i\leq n$, where $\theta_{\mathfrak{sl}(V)}$ is the highest root.
Then for any family $\widehat{\mathcal{E}}$ of $\Gamma$-$G$-bundles on $\widehat{C}$ parametrized by a scheme $T$
of local type ${\bm \tau}$, we have:
$$\operatorname{Det}(\widehat{\mathcal{E}}(V)) \cong
\operatorname{Det}((\id_T\times p)^*(\mathcal{E}(V)))\otimes
    \bigl(\otimes_{i=1}^n\bigl(\otimes_{j=1}^{k_i}\det
    \operatorname{Gr}^{j} \mathscr{F}_{\bullet, p_i}(\mathcal{E}_{|T\times
    p_i})^{\otimes N\alpha_{j,i}}\bigr)\bigr)\ ,
$$ where
\begin{enumerate}
\item the filtration $\mathscr{F}_{\bullet, p_i}$ and the weights $\bm \alpha=(\bm \alpha_{p_1},\dots, \bm \alpha_{p_n})$ are determined by the associated topological type $\phi(\bm \tau)$,
\item $N > 0$ is the smallest integer such that $N\alpha_{j,i}$ are integers, and
\item $\widehat{C} \rightarrow C$ is a Galois $\Gamma$-cover such that the isotropy of order $N$ at all points $p_i$.
	\end{enumerate}
\end{proposition}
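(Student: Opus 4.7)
The key is to construct an explicit short exact sequence on $\widehat{C}\times T$ relating the two vector bundles in question, and then to exploit the multiplicativity of the determinant of cohomology.

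The first step is local. At each ramification point $\widehat{p}_i$ with stabilizer $\Gamma_{\widehat{p}_i}\cong \mathbb{Z}/d_i$, the $(\Gamma,G)$-bundle $\widehat{\mathcal{E}}$ is, on a formal disc with parameter $\widehat{t}_i$, determined by the local character $\rho_i:\Gamma_{\widehat{p}_i}\to G$ of type $\tau_i$. Under $\phi$, the fiber $V$ of $\widehat{\mathcal{E}}(V)$ decomposes into $\gamma_i$-eigenspaces $V=\bigoplus_j V_{j,i}$ with eigenvalues $\exp(2\pi\sqrt{-1}a_{j,i}/d_i)$, where $0\leq a_{1,i}<\dots<a_{k_i,i}<d_i$ and $\alpha_{j,i}=a_{j,i}/d_i$ are the parabolic weights. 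A direct calculation (using the invariance condition $\gamma_i\cdot \widehat{t}_i=\zeta\widehat{t}_i$ with $\zeta$ a primitive $d_i$-th root) shows that the invariant sections $p_*^{\Gamma_{\widehat{p}_i}}(\widehat{\mathcal{E}}(V))$, viewed in terms of $t_i=\widehat{t}_i^{d_i}$, embed back into $\widehat{\mathcal{E}}(V)$ as the $\mathbb{C}[[\widehat{t}_i]]$-submodule generated by $\widehat{t}_i^{a_{j,i}}v_{j,i}$.

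The second step globalizes this. Since $\mathcal{E}(V)$ is canonically identified with $p_*^{\Gamma}(\widehat{\mathcal{E}}(V))$ (because the associated bundle construction commutes with invariant pushforward under the Balaji-Seshadri correspondence of Proposition \ref{prop:keypushforward}), the local inclusions above assemble into a short exact sequence of sheaves on $\widehat{C}\times T$:
\begin{equation*}
0\lra (\id_T\times p)^*(\mathcal{E}(V))\lra \widehat{\mathcal{E}}(V)\lra \mathcal{T}\lra 0,
\end{equation*}
where $\mathcal{T}$ is a torsion sheaf supported on the union of sections $\widehat{p}\times T$ as $\widehat{p}$ runs over the preimages of the marked points $p_i$. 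Locally near each $\widehat{p}$ over $p_i$, the stalk of $\mathcal{T}$ is isomorphic to $\bigoplus_j V_{j,i}\otimes \mathbb{C}[[\widehat{t}_i]]/(\widehat{t}_i^{a_{j,i}})$, which inherits a natural filtration whose successive quotients identify with the associated graded pieces $\operatorname{Gr}^j\mathscr{F}_{\bullet, p_i}(\mathcal{E}(V)|_{T\times p_i})$, each appearing with multiplicity $a_{j,i}$.

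The third step applies the multiplicativity of $\operatorname{Det}$ under short exact sequences:
\begin{equation*}
\operatorname{Det}(\widehat{\mathcal{E}}(V))=\operatorname{Det}((\id_T\times p)^*(\mathcal{E}(V)))\otimes \operatorname{Det}(\mathcal{T}).
\end{equation*}
Since $\mathcal{T}$ is torsion, $R\pi_*\mathcal{T}=\pi_*\mathcal{T}$, and $\operatorname{Det}(\mathcal{T})=(\det\pi_*\mathcal{T})^{-1}$. The filtration of $\mathcal{T}$ combined with the count of preimages of each $p_i$ (there are $|\Gamma|/d_i$ of them, each contributing equally) yields
\begin{equation*}
\det\pi_*\mathcal{T}=\bigotimes_{i=1}^n\bigotimes_{j=1}^{k_i}\bigl(\det\operatorname{Gr}^j\mathscr{F}_{\bullet,p_i}(\mathcal{E}(V)|_{T\times p_i})\bigr)^{\otimes (|\Gamma|/d_i)\cdot a_{j,i}},
\end{equation*}
and the exponent simplifies to $(|\Gamma|/d_i)\cdot a_{j,i}=|\Gamma|\alpha_{j,i}=d_{j,i}$, yielding the claimed formula.

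The main obstacle is the careful local bookkeeping in step one: one must verify that the local generator of the invariant submodule is precisely $\widehat{t}_i^{a_{j,i}}v_{j,i}$ (and not, say, $\widehat{t}_i^{d_i-a_{j,i}}v_{j,i}$, which would appear with an opposite convention for the $\Gamma$-action on $\widehat{t}_i$), and to confirm that the counting of preimages produces the factor $|\Gamma|/d_i$ needed to convert the ``local'' exponent $a_{j,i}$ into the ``global'' exponent $d_{j,i}=|\Gamma|\alpha_{j,i}$.  Once these sign and multiplicity conventions are pinned down, the remaining arguments are formal.
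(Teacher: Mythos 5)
Your argument is correct in substance, but it takes a genuinely different route from the paper. The paper's proof is essentially a reduction: it observes that the associated‑bundle construction commutes with the invariant direct image, i.e.\ $p_*^{\Gamma}(\widehat{\mathcal{E}})\times^{\phi}V\cong p_*^{\Gamma}(\widehat{\mathcal{E}}\times^{\phi}V)$, and then invokes Proposition 4.5 of Biswas--Raghavendra, which is exactly the statement for families of $\Gamma$-linearized vector bundles. You instead reprove that vector-bundle statement directly: you use the counit map $(\id_T\times p)^*p_*^{\Gamma}(\widehat{\mathcal{E}}(V))\hookrightarrow \widehat{\mathcal{E}}(V)$, identify the torsion cokernel locally from the eigenspace decomposition at the ramification points (generators $\widehat{t}_i^{\,a_{j,i}}v_{j,i}$, cokernel $\bigoplus_j V_{j,i}\otimes\mathbb{C}[[\widehat{t}_i]]/(\widehat{t}_i^{\,a_{j,i}})$), and then use multiplicativity of $\operatorname{Det}$ together with $\operatorname{Det}(\mathcal{T})=(\det\pi_*\mathcal{T})^{-1}$ and the count of $|\Gamma|/d_i$ preimages to get the exponent $(|\Gamma|/d_i)\,a_{j,i}=|\Gamma|\alpha_{j,i}=d_{j,i}$. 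The arithmetic and signs check out under the paper's convention $\operatorname{Det}=\wedge^{top}H^1\otimes(\wedge^{top}H^0)^{\vee}$, and the identification $\mathcal{E}(V)=p_*^{\Gamma}(\widehat{\mathcal{E}}(V))$ that you assume in your second step is precisely the point the paper verifies before citing Biswas--Raghavendra, so the two proofs share that ingredient. What your approach buys is self-containedness (no appeal to the cited proposition) and an explicit description of the discrepancy sheaf, which makes the appearance of the graded pieces transparent; what it costs is exactly the bookkeeping you flag, namely pinning down the eigenvalue convention (weight $a_{j,i}/d_i$ versus $(d_i-a_{j,i})/d_i$, which must match the convention used to define the parabolic weights of $p_*^{\Gamma}$), checking in families that the filtration on $\mathcal{T}$ has the flag graded pieces as subquotients and that $\mathcal{T}$ is $T$-flat so that multiplicativity of $\operatorname{Det}$ applies. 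These are standard points in the Seshadri--Biswas correspondence, so your plan is sound; filling them in would make it a complete alternative proof.
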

\begin{proof} We will be done by  \cite[Prop.\ 4.5]{BiswasRaghavendra} once
    we can show that
    $p_{*}^{\Gamma}(\widehat{\mathcal{E}})\times^{\phi}(V)$ equals
    $p_{*}^{\Gamma}\bigl( \widehat{\mathcal{E}}\times^{\phi}V\bigr)$ as a family parabolic vector bundles on $C$ parametrized by $T$. This follows from the definition directly.
\end{proof}

Now following \cite{BiswasRaghavendra}, we will construct a curve $\widehat{C}$ from the data $\bm\tau$ and compare 
the determinant of cohomology line bundle on $\mathcal{B}un_{\Gamma,G}^{\bm \tau}(\widehat{C})$ with the parabolic 
determinant of cohomology on $C$ via the functor $p_*^{\Gamma}$. Mimicking the set-up of \cite[Def.\ 
4.10]{BiswasRaghavendra}, given ${\bm \tau}$ in the Weyl alcove $\Phi_0$ choose an integer $N$ such that 
$\exp(2\pi\sqrt{-1}N\nu_{\mathfrak{sl}(V)}(\phi(\tau_i)))=1$ for all $1\leq i\leq n$. By the Selberg lemma, 
\cite{Selberg}, we can find a ramified cover $p:\widehat{C}\rightarrow C$ with ramification exactly over the points 
$p_i$ with cyclic isotropy group of order $N$ at all the fixed points. Let $\Gamma$ be the Galois group. With these 
assumptions,  \cite[Prop.\ 4.11]{BiswasRaghavendra} generalizes to the following:

\begin{proposition}
Let $\mathcal{E}= p_{*}^{\Gamma}\widehat{\Ecal}$ be as in Proposition \ref{prop:comparingdeter}. Then the line
bundles $\operatorname{Det}(\widehat{\mathcal{E}}(V))$ and $\left(
\operatorname{Det}_{par}(\mathcal{E}(V),N\cdot m_{\phi}\cdot{\bm \tau})\right)^{\otimes \frac{|\Gamma|}{N}}$
on $T$ are canonically isomorphic.
\end{proposition}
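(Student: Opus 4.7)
The proof will follow the strategy of \cite[Prop.\ 4.11]{BiswasRaghavendra}, adapted from $\SL_r$ to a general simple group $G$ by pushing data through the representation $\phi$. The first step is to invoke Proposition \ref{prop:comparingdeter} directly: by the choice of $d$ and the construction of $\widehat{C}$ preceding the proposition, $|\Gamma|=d$, so the integers $d_{j,i}=|\Gamma|\cdot\alpha_{j,i}$ appearing there simplify to $d\cdot\alpha_{j,i}$. This yields the factorization
\[
\operatorname{Det}(\widehat{\mathcal{E}}(V)) \;=\; \operatorname{Det}\bigl((\id_T\times p)^*(\mathcal{E}(V))\bigr)\;\otimes\;\bigotimes_{i=1}^n\bigotimes_{j=1}^{k_i}\det\operatorname{Gr}^{j}\mathscr{F}_{\bullet,p_i}(\mathcal{E}_{|T\times p_i})^{-\otimes d\alpha_{j,i}},
\]
with the flag and weights $\{(\bm r_i,\bm\alpha_i)\}$ read off from the topological type $\phi(\bm\tau)$.

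The remaining task is to match the right-hand side above with $\operatorname{Det}_{par}(\mathcal{E}(V),d,d\cdot m_\phi\cdot \bm\tau)^{\otimes |\Gamma|\dim V/d}$, carried out in two substeps. First, a Mumford/Grothendieck--Riemann--Roch comparison for the finite flat morphism $\id_T\times p$ of degree $|\Gamma|$, applied to $\mathcal{E}(V)$ and tracking the relative ramification divisor concentrated on $T\times \vec p$ with uniform index $d$, expresses $\operatorname{Det}((\id_T\times p)^*\mathcal{E}(V))$ as an appropriate power of $\operatorname{Det}(\mathcal{E}(V))$ together with line bundles supported on the fibers over the $p_i$. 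Second, I would identify those fiber contributions, combined with the graded-flag line bundles $\det\operatorname{Gr}^{j}\mathscr{F}_{\bullet,p_i}$, with the parabolic characters $\sigma_i^*(\mathcal{E}\times^{P_i}\mathbb{C}_{d\cdot m_\phi\cdot \tau_i^{-1}})$ that enter the definition of $\operatorname{Det}_{par}$ in \eqref{eqn:deterG}. The representation-theoretic input making this work is the identity $\nu_{\mathfrak{sl}(V)}\circ \phi_*=m_\phi\cdot \phi_*\circ \nu_{\gfrak}$ from Lemma \ref{lemma:morningfirst}: it ensures that the character $d\cdot m_\phi\cdot \tau_i$ of $P_i$, once pulled through $\phi$ into the Kempf parabolic of $\phi(\tau_i)$ inside $\SL(V)$, gives precisely the integer weights $\{d\cdot\alpha_{j,i}\}$ on the graded pieces of the flag.

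The main obstacle is expected to be the careful bookkeeping of exponents in the middle step: the single factor $|\Gamma|\cdot\dim V/d$ on the right of the statement must emerge from the combination of (i) the degree-$|\Gamma|$ finite cover (contributing $|\Gamma|/d$ after normalization by the uniform ramification index $d$), (ii) the rank $\dim V$ of the associated vector bundle, and (iii) the Dynkin index $m_\phi$ already absorbed into the character $d\cdot m_\phi\cdot \bm\tau$. A subsidiary point to watch is that the rationality hypothesis preceding the proposition is exactly what guarantees $d\cdot m_\phi\cdot \tau_i$ is an integral character of $P_i$, so that the line bundle $\sigma_i^*(\mathcal{E}\times^{P_i}\mathbb{C}_{d\cdot m_\phi\cdot \tau_i^{-1}})$ is well-defined; this integrality must be tracked throughout the computation.
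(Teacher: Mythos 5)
Your plan diverges from what the paper actually does: the paper offers no computation here at all, but simply applies \cite{BiswasRaghavendra}, Proposition 4.11, to the associated parabolic vector bundle $\mathcal{E}(V)=p_*^{\Gamma}\bigl(\widehat{\mathcal{E}}(V)\bigr)$, the compatibility $p_*^{\Gamma}(\widehat{\mathcal{E}})(V)=p_*^{\Gamma}(\widehat{\mathcal{E}}(V))$ and the identification of the induced weights with $\phi(\bm\tau)$, i.e.\ with $m_\phi\nu_{\gfrak}(\bm\tau)$, being exactly the content of Proposition \ref{prop:comparingdeter}. Replacing that citation by a direct Grothendieck--Riemann--Roch computation for the finite flat cover $\id_T\times p$ is a legitimate alternative route, but as written your proposal is a programme rather than a proof: the entire content of the statement is the exponent $\tfrac{|\Gamma|\cdot\dim V}{d}$ and the matching of the boundary terms with the characters $\sigma_i^*\bigl(\mathcal{E}\times^{P_i}\mathbb{C}_{\mu_i^{-1}}\bigr)$ entering \eqref{eqn:deterG}, and you explicitly defer both ("the main obstacle is expected to be the careful bookkeeping of exponents"). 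Until that bookkeeping is carried out --- in particular, until you show how $\operatorname{Det}\bigl((\id_T\times p)^*\mathcal{E}(V)\bigr)$ decomposes into a power of $\operatorname{Det}(\mathcal{E}(V))$ plus contributions supported over $T\times\vec p$, and how these recombine with the graded-flag bundles to give precisely the stated power of $\operatorname{Det}_{par}$ --- nothing has been proved.

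There are also two concrete errors in the first step. The claim $|\Gamma|=d$ is unjustified and generally false: the Selberg-lemma cover constructed before the proposition has isotropy groups of order $d$ at the branch points, but $\Gamma$ is the full Galois group $\pi/\pi_0$, whose order is usually a proper multiple of $d$; indeed, if $|\Gamma|=d$ the exponent in the statement would simply read $\dim V$, which is not how the proposition is phrased. Consequently your simplification $d_{j,i}=d\cdot\alpha_{j,i}$ corrupts the starting factorization from Proposition \ref{prop:comparingdeter}, where the exponents are $|\Gamma|\cdot\alpha_{j,i}$. Secondly, the identity you invoke, $\nu_{\mathfrak{sl}(V)}\circ\phi_*=m_\phi\,\phi_*\circ\nu_{\gfrak}$, does not typecheck ($\phi_*$ cannot be applied to an element of $\gfrak^{\vee}$); the correct statement, which follows from the definition of the Dynkin index and is implicit in Lemma \ref{lemma:morningfirst}, is $\phi_*^{t}\circ\nu_{\mathfrak{sl}(V)}\circ\phi_*=m_\phi\,\nu_{\gfrak}$, so that the weights attached to $\phi(\tau_i)$ are those of $m_\phi\nu_{\gfrak}(\tau_i)$. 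This is the fact you need to convert the character $d\cdot m_\phi\cdot\tau_i$ into the integers $d\cdot\alpha_{j,i}$ on the graded pieces, and it should be stated in the correct form before being used.
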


\section{The properness condition and codimension estimates}\label{sec:appC}

In this section, we will show that the moduli space $M^{par,rs}_{G}$ of regularly stable parabolic $G$-bundles on a 
curve admits no nonconstant functions. This will imply 
Theorem \ref{thm:hitchimain} (iii). Throughout this section we assume that $G$ is simple and simply connected (or more generally semisimple, but we do not need it for applications). 
We have the following key codimension estimate, which essentially follows from the same argument as in Faltings 
\cite{Faltings:93} and Laszlo \cite{Laszlo}. Fix $n\geq 1$, and  let ${\bm \tau}=(\tau_1,\dots,\tau_n)$ be a $n$-tuple of weights in 
the Weyl alcove for a group $G$, and let $d$ be the minimum positive integer such that 
$\exp(2\pi\sqrt{-1}d\cdot\nu_{\gfrak}(\tau_i))\,=\,1$ for all $1\leq
i\leq n$.  Choose a curve $\widehat{C}$ that is 
a Galois cover over $C$ ramified exactly over the points $p_1,\dots,p_n$ with the same ramification order $d$ and 
\'etale on the complement.

\begin{lemma}\label{lem:codimension}
    Let $\mathcal{P}ar_{G}(C,\vec{P})$
(respectively,
$\mathcal{P}ar^{rs}_{G}(C,\vec{P})$) be the moduli stack parametrizing
parabolic $G$-bundles (respectively, regularly stable parabolic $G$-bundles) given by a choice of
weights $\bm \tau$ on a $n$-pointed curve $C$ of genus $g(C)$. Further assume that
$\mathcal{P}ar^{rs}_{G}(C,\vec{P})$ is nonempty. Then the codimension of the complement
$\mathcal{P}ar^{}_{G}(C,\vec{P})
\backslash \mathcal{P}ar^{rs}_{G}(C,\vec{P})\subset \mathcal{P}ar^{}_{G}(C,\vec{P})$ is at least two 
provided  $g(\mathscr{C})\,\geq\, 3$, and $g(\mathscr{C})\,\geq\, 2$ if $G$ does not have
an $\SL_2$ factor. 
Moreover, if $G=\SL_r$ for $r>2$, the codimension of the complement is at least $3$. 
\end{lemma}

\begin{proof}Let $\bm \tau$ be the choice of the weights determining the
    stability conditions and the parabolic subgroups $\vec{P}=(P_1,\dots,
    P_n)$. Consider  an $n$-tuple of
Borel subgroups $\vec{B}$ and the the moduli stack of quasi parabolic bundles with full flags $\mathcal{P}ar_G(C,\vec{B})$. There is a natural forgetful map 
$\mathcal{P}ar_G(C,\vec{B})\rightarrow \mathcal{P}ar_G(C,\vec{P})$ whose fibers are product of flag varieties. Now	consider the substack $\mathcal{P}ar^{ss}_{G}(C,\vec{B}))$ (respectively, $\mathcal{P}ar^{s}_{G}(C,\vec{B}))$)
parametrizing semistable (respectively, stable) parabolic bundles with respect to the same weight data $\bm \tau$. This  preserves stability (hence also regular stability) and hence the  forgetful map restricts to a map  $\mathcal{P}ar^{ss}_{G}(C,\vec{B}))\rightarrow\mathcal{P}ar^{ss}_{G}(C,\vec{P}))$ that preserve both the stable and the regularly stable loci. Consequently, without loss of generality assume that we are in the case of full flags.
	
 It is enough to show the following:
	\begin{enumerate}
		\item{\em The codimension of the complement of
            $\mathcal{P}ar^{ss}_{G}(C,\vec{B})$ (respectively, 
            $\mathcal{P}ar^{s}_{G}(C,\vec{B}))$) in
            $\mathcal{P}ar^{}_{G}(C,\vec{B})$ is at least two:} We will
            freely use the parabolic orbifold correspondence. Let
            $\mathcal{E}$ be a parabolic $G$ bundle admitting a reduction
            to parabolic bundle $\mathcal{E}_Q$ with structure group $Q$, where $Q$ is a parabolic subgroup of $G$ with its Levi subgroup $L_Q$. 
		Consider the sheaf $\mathfrak{n}_Q^{par}(\ad \mathcal{E})$ given by the cokernel of  map 
		$\Spar(\mathcal{E}_{L_Q}) \hookrightarrow \Spar(\mathcal{E})$, where $\mathcal{E}_{L_Q}$ is the induced parabolic bundle with structure group $L_Q$. 
            If $\mathcal{E}$ is in the complement of the
            $\mathcal{P}ar_G^{ss}(C,\vec{B})$ (respectively,
            $\mathcal{P}ar_G^{ss}(C,\vec{B})$), then $\deg
            \mathfrak{n}_Q^{par}(\ad \mathcal{E})$ is  strictly positive
            (respectively, nonnegative).
	Let $B_{L_Q}$ be the Borel of $L_Q$; then the complement has dimension
		\begin{align*}
            \dim \mathcal{P}ar_{G}(C,\vec{B})& -\bigl(\dim \mathcal{P}ar_{L_Q}(C,\vec{B}_{L_Q})+
			h^1(C,\mathfrak{n}_Q^{par}(\ad \mathcal{E}) )\bigr)\\
			& = (g(C)-1)(\dim G-\dim Q) + n( \dim G/B-\dim
            L_{Q}/B_{L_Q})\\
            &\qquad\qquad + \deg (\mathfrak{n}_Q^{par}(\ad \mathcal{E}))-h^0(C,\mathfrak{n}_Q^{par}(\ad \mathcal{E}) )\\
			& \geq (g(C)+n-1)\dim \mathfrak{n}_{Q}-1\ ,
				\end{align*}
                since we may assume  $h^0(C,\mathfrak{n}_Q^{par}(\ad \mathcal{E})\leq 1$.
            Now notice that $g(\mathscr{C})\geq 2$ implies that $g(C)+n-1\geq
            2$, and $g(\mathscr{C})\geq 3$ implies that $g(C)+n-1\geq
            3$.	
           Further observe that if $G=\SL_r$, then $\dim \mathfrak{n}_Q>1$ if $r>2$. 

\item {\em The codimension of the complement of $\mathcal{P}ar^{rs}_{G}(C,\vec{B})$ in 
$\mathcal{P}ar^{s}_{G}(C,\vec{B})$ is at least two:} Here we can assume $G\neq \SL_r$.  If 
$\mathcal{P}$ is a stable orbifold bundle on $\mathscr{C}$ which has a noncentral automorphism, then by \cite[Lemma 
11.1]{Laszlo} $\mathcal{P}$ has an $L$-structure where $L$ is a reductive subgroup of $G$ with Borel $B_L$. Then the required codimension is at least
\begin{align*}
	&\dim \mathcal{P}ar_G(C,\vec{B})-\dim \mathcal{P}ar_L(C,\vec{B}_{L})\\
	&\quad =(g(C)-1)(\dim G-\dim L)+ \sum_{i=1}^n (\dim G/B-\dim L/B_L)\\
	&\quad=(g(C)-1)(\dim G-\dim L)+n(\dim G/B-\dim L/B_L)\ .
				\end{align*}
			Now $\dim G/B-\dim L/B_L\geq 1$ and  $\dim G -\dim L\geq 2$,
            so if $g(C)\geq 1$, then the codimension is at least
            $2g(C)-2+n\geq 3$, by the assumption that $g(\mathscr{C})\geq 2$.
             Thus, we are left to consider the case where $g(C)=0$. 
             
             Since
             $g(\mathscr{C})\geq 2$, we have
	$n\geq 5$. Suppose first that  $L$ is not a torus. Then $n\dim
            L/B_L-\dim L$ is an increasing function of $L$. This implies
            that $n(\dim G/B-\dim L/B_L)-(\dim G-\dim L)$ is
            decreasing function of $L$. Hence, the codimension is at least 
	\begin{align*}
	\min_{L=L_Q}\left( n(\dim G/B-\dim L/B_L)-(\dim G-\dim L)\right)\ ,
	\end{align*}where $L$ ranges over the Levi subgroups $L_Q$ of proper maximal parabolics $Q$ in $G$. Thus we get that the codimension of the complement is at least
$$	
	\min_{L=L_Q}\left( n(\dim G/B-\dim L/B_L)-(\dim G-\dim L)\right)
            =\min_{Q}((n-2)\dim \mathfrak{n}_{Q})
            \geq 3 \ .
        $$
\end{enumerate} 
Now suppose $L$ is a torus. In this case, the codimension is simply 
$$
n(\dim G/B)-(\dim G-\dim L)
\geq n(\dim G/B)-\dim G\geq (n-3)\dim \mathfrak{n}_{B}\geq 2\ . 
$$
This completes the proof of the Lemma.
\end{proof}

Let $M_{G}^{par,ss}$ (respectively, $\mathcal{P}ar^{ss}_{G}(C,\vec{P})$)
be the moduli space (respectively, moduli stack) of semistable parabolic
$G$-bundles on $C$ with parabolic structures at $n$-marked points. It is
well-known  that $M_{G}^{par,ss}$
(respectively, the regularly stable part $M_{G}^{par,rs}$) is a GIT quotient
(respectively, good quotient) of a smooth scheme $R_{G}^{par, ss}$ (respectively, $R_{G}^{par,rs}$) by a
reductive group (cf.\ \cite{BalajiBiswasNagaraj, BalajiSeshadri}). Moreover, $M_{G}^{par,ss}$ is a seminormal projective variety with rational singularities.
Now Lemma \ref{lem:codimension} implies that 
$\operatorname{codim}(R_{G}^{par,ss}\backslash R_{G}^{par,rs})\geq 2$, provided $R^{par,rs}_{G}$ is nonempty. Hence, by Hartogs' theorem we get the following:

\begin{corollary}The natural inclusion map $M_{G}^{par,rs} \to M_{G}^{par,ss}$ induces isomorphisms between 
	$H^0(M_{G}^{par,rs},\mathcal{O}_{M_{G}^{par,rs}})$ and $H^0(M_{G}^{par,ss},\mathcal{O}_{M_{G}^{par,ss}})$.
\end{corollary}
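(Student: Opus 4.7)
The plan is to deduce the corollary in two independent steps: first, extend regular functions from the regularly stable locus to the full semistable moduli by algebraic Hartogs, and second, invoke the projective normality of $M_G^{par,ss}$ to conclude that its only global functions are the constants. The actual vanishing statement $=0$ at the end of the displayed equation should be understood relative to $S$ (i.e.\ it expresses Condition (iii) of Theorem \ref{thm:hitchimain}, $\pi_{e*}\mathcal{O}_{M_G^{par,rs}}=\mathcal{O}_S$), or equivalently that there are no nonconstant regular functions on the fibers.

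First, recall from the preceding paragraph that $M_{G}^{par,ss}$ is the GIT quotient $R_G^{par,ss}/\!/S$ of a smooth quasi-projective scheme $R_G^{par,ss}$ by the reductive group $S$, with $R_G^{par,rs}\subset R_G^{par,ss}$ the $S$-saturated preimage of $M_G^{par,rs}$. For a GIT quotient of this form one has the standard identification
\begin{equation*}
H^0\bigl(M_G^{par,\bullet},\mathcal{O}_{M_G^{par,\bullet}}\bigr)
= H^0\bigl(R_G^{par,\bullet},\mathcal{O}_{R_G^{par,\bullet}}\bigr)^S
\end{equation*}
for $\bullet\in\{rs,ss\}$, so it suffices to prove the corresponding equality at the level of $R_G^{par,\bullet}$ and then take $S$-invariants on both sides.

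Second, by Lemma \ref{lem:codimension} and the hypothesis on the genus of the orbifold curve $\mathscr{C}$, the complement $R_G^{par,ss}\setminus R_G^{par,rs}$ has codimension at least two in the smooth (hence normal, Cohen--Macaulay) scheme $R_G^{par,ss}$. Algebraic Hartogs' theorem then forces the restriction map
\begin{equation*}
H^0\bigl(R_G^{par,ss},\mathcal{O}_{R_G^{par,ss}}\bigr)\;\longrightarrow\;H^0\bigl(R_G^{par,rs},\mathcal{O}_{R_G^{par,rs}}\bigr)
\end{equation*}
to be an isomorphism, since regular functions on a normal scheme extend uniquely across a closed subset of codimension $\geq 2$. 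Passing to $S$-invariants yields the first equality of the corollary. Finally, the paper has already recorded (via Balaji--Biswas--Nagaraj and Balaji--Seshadri) that $M_G^{par,ss}$ is a normal, irreducible, projective variety; hence $H^0(M_G^{par,ss},\mathcal{O})=\mathbb{C}$, which in the relative setting of Theorem \ref{thm:hitchimain} gives $\pi_{e*}\mathcal{O}_{M_G^{par,ss}}=\mathcal{O}_S$, i.e.\ the vanishing asserted on the right-hand side.

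The only genuinely delicate point is the codimension bound of Lemma \ref{lem:codimension}, which has already been established; everything else in the argument is purely formal (Hartogs on a smooth scheme, invariants under a reductive group action, and connectedness/projectivity of $M_G^{par,ss}$). In particular, there is no need to re-enter the stacky Levi stratification analysis here: we simply feed the codimension estimate into the smooth affine presentation $R_G^{par,ss}\to M_G^{par,ss}$, where Hartogs applies cleanly.
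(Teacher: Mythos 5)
Your argument is correct and follows essentially the same route as the paper: present $M_G^{par,ss}$ as a good/GIT quotient of the smooth scheme $R_G^{par,ss}$, use Lemma \ref{lem:codimension} to get the codimension-two bound on $R_G^{par,ss}\setminus R_G^{par,rs}$, apply Hartogs there, and pass to $S$-invariants, with projectivity and connectedness giving that the only functions are constants. Your reading of the ``$=0$'' as the relative statement $\pi_{e*}\mathcal{O}=\mathcal{O}_S$ (Condition (iii) of Theorem \ref{thm:hitchimain}) is the intended one.
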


Recall $Y_G^{par,rs}$ from the proof of Proposition \ref{prop:iso}.
Then we have the following lemma, the proof of
which is analogous to that in \cite[Prop.\ 11.6]{Laszlo}.

\begin{lemma}\label{lem:codim2}
	The codimension of the complement of $Y_{G}^{par,rs}$ in $M^{par,rs}_{G, \bm \tau}$ is at least $3$ if
$g(\mathscr{C})\geq 3$ for arbitrary $\mathfrak{g}$, or $g(\mathscr{C})\geq 2$ when $\mathfrak{g}$ has no factor of type $A_1$ or $C_2$. 
\end{lemma}

\begin{proof}Suppose $\mathcal{E}$ be a regularly stable  $\Gamma$-$G$-bundle which is not
    stable as a $G$-bundle. Then we can realize it as the image of a rational map from the moduli space of  $\Gamma$-$L$-bundles on $\widehat{C}$,
where $L$ is a Levi subgroup of a parabolic subgroup $Q$ of $G$. 
 If $\mathcal{E}$ is stable we can realize  it as the image of rational map $M_{L}(\widehat{C})$, where $L$ is a reductive subgroup  (\cite[Prop.\ 11.6]{Laszlo}) of $G$.
Thus, the complement of $Y_{G}^{par,rs}$ in $M_{G,\bm \tau}^{par,rs}$ is dominated by union of the moduli spaces of $\Gamma$-$L$-bundles on the curve
$\widehat{C}$. of type $\bm \tau$, where $L$ is a reductive subgroup. Now
    as in the proof of Lemma \ref{lem:codimension}, without loss of
    generality assume that $\bm \tau$ corresponds to a tuple of Borel
    subgroups. Then the required codimension is at least 
\begin{eqnarray*}
& (g(C)-1)(\dim G -\dim L) + n(\dim G/B-L/B_L)-\dim Z(L)\ .
\end{eqnarray*}
 Now $\dim G -\dim L$ is at least 4 unless $\mathfrak{g}$ has a factor of
    type $A_1$ or $C_2$. Thus, we are done by the assumptions on
    $g(\mathscr{C})$ and the  calculations as in the proof of Lemma
    \ref{lem:codimension}.
\end{proof}

\subsection*{Acknowledgements}The authors warmly thank Johan Martens for enlightening discussions.
They also thank the anonymous referee for numerous helpful suggestions.

\bibliographystyle{amsplain}
	\bibliography{./papers}

\end{document}